\newcommand\Tstrut{\rule{0pt}{3ex}}         
\newcommand\Bstrut{\rule[-1.9ex]{0pt}{0pt}}   
\newtheorem{theorem}{Theorem}
\newtheorem{lemma}{Lemma}
\newtheorem{proposition}[lemma]{Proposition}
\numberwithin{lemma}{section}
\newtheorem{observation}[theorem]{Observation}
\newtheorem{question}[theorem]{Question}
\newtheorem{definition}[theorem]{Definition}
\let\leq\leqslant
\let\geq\geqslant
\let\setminus\smallsetminus
\definecolor{our-gray}{gray}{0.40}
\newcommand{\gray}[1]{%
 \textcolor{our-gray}{#1}%
}
\newcommand{\M}[4]{%
 (%
 \arraycolsep=0.5pt%
 \begin{array}{*4c}%
  #1 & #2 & #3 & #4%
 \end{array}%
 )%
}
\newcommand{\MM}[8]{%
 \left(%
 \renewcommand\thickspace{\kern1pt}%
 \begin{smallmatrix}%
  #1 & #2 & #3 & #4\\%
  #5 & #6 & #7 & #8%
 \end{smallmatrix}%
 \right)%
}
\newcommand{\Mcross}{%
 \left(%
 \renewcommand\thickspace{\kern1pt}%
 \begin{smallmatrix}%
  - & 0 & + & 0\\%
  0 & + & - & 0\\%
  0 & - & 0 & +%
 \end{smallmatrix}%
 \right)%
}
\newcommand{\bZ}{\mathbb{Z}}
\newcommand{\bp}{\mathbf{p}}
\newcommand{\floor}[1]{{\left\lfloor #1 \right\rfloor}}
\newcommand{\ceil}[1]{{\left\lceil #1 \right\rceil}}
\newcommand{\Xa}{X_\text{\bf ind}}
\newcommand{\Xw}{X_\text{\bf cli}}
\let\old@setaddresses\@setaddresses
\def\@setaddresses{\bgroup\parindent 0pt\let\scshape\relax\old@setaddresses\egroup}
\begin{document}

\title{The Chromatic Number of Ordered Graphs With Constrained Conflict Graphs}

\author{Maria Axenovich and Jonathan Rollin and Torsten Ueckerdt}

\maketitle

\begin{abstract}
 An ordered graph $G$ is a graph whose vertex set is a subset of integers.
 The edges are interpreted as tuples $(u,v)$ with $u < v$.
 For a positive integer $s$, a matrix $M \in \bZ^{s \times 4}$, and a vector $\bp = (p,\ldots,p) \in \bZ^s$ we build a conflict graph by saying that edges $(u,v)$ and $(x,y)$ are conflicting if $M(u,v,x,y)^\top \geq \bp$ or $M(x,y,u,v)^\top \geq \bp$, where the comparison is componentwise.
 This new framework generalizes many natural concepts of ordered and unordered graphs, such as the page-number, queue-number, band-width, interval chromatic number and forbidden ordered matchings.

 For fixed $M$ and $p$, we investigate how the chromatic number of $G$ depends on the structure of its conflict graph. 
 Specifically, we study the maximum chromatic number $\Xw(M,p,w)$ of ordered graphs $G$ with no $w$ pairwise conflicting edges and the maximum chromatic number $\Xa(M,p,a)$ of ordered graphs $G$ with no $a$ pairwise non-conflicting edges.
 We determine $\Xw(M,p,w)$ and $\Xa(M,p,a)$ exactly whenever $M$ consists of one row with entries in $\{-1,0,+1\}$ and moreover consider several cases in which $M$ consists of two rows or has arbitrary entries from~$\bZ$.
\end{abstract}

\section{Introduction}

At most how many colors are needed to properly color the vertices of a graph if it does not contain a fixed forbidden pattern?
This is certainly one of the most important questions in graph theory and combinatorics, where chromatic number is investigated for graphs with forbidden minors (e.g.\ Four-Color-Theorem~\cite{AH77,AHK77}), forbidden subgraphs (e.g.\ with high girth and high chromatic number~\cite{LargeGirthHighChromatic} or with given clique number and maximum degree~\cite{Reed99}), or forbidden induced subgraphs (e.g.\ perfect graphs~\cite{Ber63}), just to name a few.

In the present paper, we investigate this question for ordered graphs, that are graphs with vertices being integers, for which some information on conflicting edges is given.
The concept of conflicting edges is defined by elementary linear inequalities in terms of the edge-endpoints.
This algebraic framework captures several natural cases, such as crossing edges, nesting edges or well-separated edges, as well as some non-trivial parameters of unordered and ordered graphs, such as the queue-number, page-number, degeneracy, band-width and interval chromatic number.

Ordered graphs have been mainly investigated with respect to their ordered extremal functions~\cite{PT05,PachTardos,Klazar_SmallGraphs,Klazar_DavenportSchinzelSeq}, particularly in the case of interval chromatic number~two~\cite{FuerediHajnal,Tardos,Pettie,MarcusTardos}, and their ordered Ramsey properties~\cite{ConlonFoxLeeSudakov,BalkoCibulkaKralKyncl}.
The chromatic number of ordered graphs without a forbidden pattern has received very little attention so far; the only references being~\cite{DW04} and most recently~\cite{our-trees}.
But let us mention that, if the pattern is given by a forbidden ordered subgraph $H$ and the ordered extremal function of $H$ is linear, then there is a constant $c(H)$ such that the chromatic number of any graph without this pattern is at most $c(H)$.
In~\cite{our-trees} it is shown that even for some ordered paths $H$ there are ordered graphs of arbitrarily large chromatic number without $H$ as an ordered subgraph.

\paragraph{Ordered Graphs and Conflicting Edges.}

All considered graphs are finite, simple and have at least one edge.
An \emph{ordered graph} is a graph $G = (V,E)$ with $V \subset \bZ$, i.e., a graph whose vertices are distinct integers.
Note that here two isomorphic ordered graphs need to have exactly the same subset of $\bZ$ as their vertex set.
So this definition differs from the usual definition of ordered graphs, where only the ordering of the vertices matters but not an embedding into $\bZ$.
We consider the integers, and therefore the vertices of $G$, laid out along a horizontal line ordered by increasing value from left to right.
Hence if $u,v \in V \subset \bZ$, $u < v$, we say that $u$ is \emph{left of} $v$ and $v$ is \emph{right of} $u$.
For a fixed ordered graph $G = (V,E)$, an edge $e \in E$ is then associated with the (ordered) tuple $(u,v)$ where $e = uv$ and $u < v$.
For a positive integer $s$ and two vectors $x,y \in \bZ^s$, $x + y$ denotes the componentwise addition, and $x \leq y$ and $x \geq y$ denote the standard componentwise comparability of $x$ and $y$.
We shall abbreviate the vector $(p,\ldots,p)^\top \in \bZ^s$ by $\bp$.
For a given injective map $\phi:\bZ\to\bZ$ and an ordered graph $G$ we say that an ordered graph $G'$ is \emph{obtained from $G$ by $\phi$} if $V(G') = \{\phi(x) \mid x \in V(G)\}$ and $(\phi(x),\phi(y))$ is an edge in $G'$ if and only if $(x,y)$ is an edge in $G$ for any $x$, $y\in\bZ$.
For example, from an ordered graph we obtain another ordered graph by translating or scaling the vertex set.

For a matrix $M \in \bZ^{s \times 4}$ and a parameter $p \in \bZ$ we define the \emph{conflict graph} of $G$ with respect to $M$ and $p$, denoted by $M_p(G)$, as follows:
\begin{align*}
 V(M_p(G)) &:= E(G),\\
 E(M_p(G)) &:= \{ e_1e_2 \mid e_1 = (u_1,v_1); e_2 = (u_2,v_2); e_1,e_2 \in E;\\
  & \hspace{4.5em} M(u_1,v_1,u_2,v_2)^\top \geq \bp \text{ or } M(u_2,v_2,u_1,v_1)^\top \geq \bp\}.
\end{align*}
We say that $e_1,e_2 \in E(G)$ are \emph{conflicting} if $e_1e_2 \in E(M_p(G))$.
Let $M'$ denote the matrix that is obtained from $M$ by swapping the first column with the third and the second with the fourth.
Note that this operation preserves all conflicts and non-conflicts, and hence $M_p(G)=M'_p(G)$.

In many cases considered here the matrix $M$ has entries in $\{-1,0,1\}$.
For better readability we shall use the symbols $\{-,0,+\}$ instead of $\{-1,0,1\}$ as the entries of $M$.

One advantage of this proposed abstract framework is that many natural parameters of an (unordered) graph $F$ can conveniently be phrased in terms of $M_p(G)$, where $G$ is an ordered graph whose underlying unordered graph is $F$.
For example, two edges $e_1=(u_1,v_1)$, $e_2=(u_2,v_2)$ in an ordered graph $G$ are called \emph{crossing} if $u_1<u_2<v_1<v_2$.
Similarly, an edge $(u_1,v_1)$ is \emph{nested under} an edge $(u_2,v_2)$ if $u_2 < u_1 < v_1 < v_2$.
Then $e_1$, $e_2$ are crossing, respectively nesting, if and only if $e_1$, $e_2$ are conflicting with respect to $p=1$ and
\[
 M^\text{cross} = \Mcross, \quad \text{respectively} \quad  M^\text{nest}=\MM{+}{0}{-}{0}{0}{-}{0}{+},
\]
see Figure~\ref{fig:Example-Matrices} (top left and center).
So, there is no pair of crossing edges in an ordered graph $G$ if and only if $\omega(M^\text{cross}_1(G)) = 1$ and there is no set of $w+1$ pairwise crossing edges if and only if $\omega(M^\text{cross}_1(G)) \leq w$, where $\omega(H)$ denotes the clique number of graph $H$.
The former case characterizes outerplanar graphs and the latter case was considered by Capoyleas and Pach~\cite{CapoyleasPach}, who showed that every $n$-vertex ordered graph $G$ with $\omega(M^\text{cross}_1(G)) \leq w$ has at most $2wn-\binom{2w+1}{2}$ edges.
In Section~\ref{sec:other-parameters} we give further examples of graph parameters that can be phrased in terms of $M_p(G)$ for appropriate $M$ and $p$.

\begin{figure}[htb]
 \centering
 \includegraphics{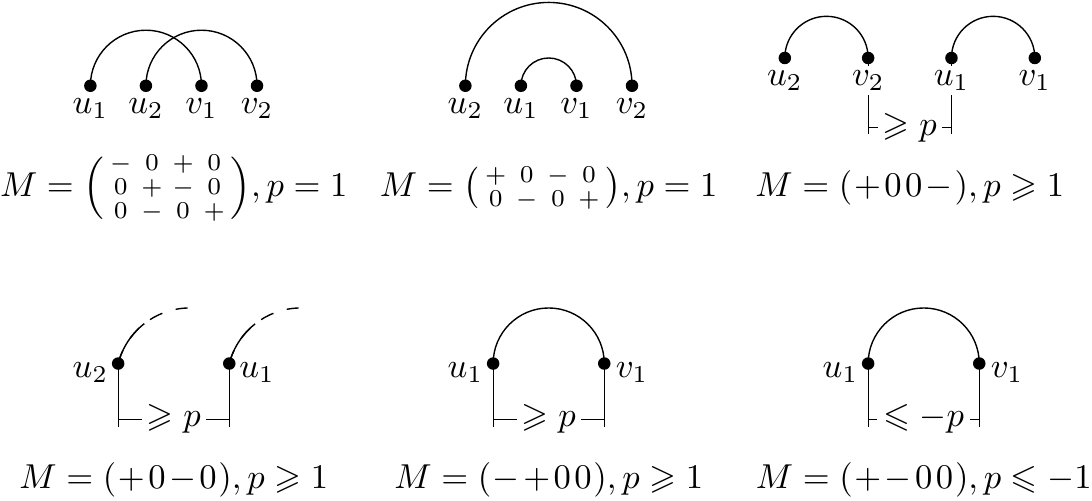}
 \caption{Examples of conflicting edges $(u_1,v_1)$, $(u_2,v_2)$ with respect to different matrices $M$ and parameter $p$. The top-left shows crossing edges and the top-center shows nesting edges. In the bottom-center and bottom-right the second edge $(u_2,v_2)$ is irrelevant.}
 \label{fig:Example-Matrices}
\end{figure}


In this paper, we are interested in the relation between the chromatic number $\chi(G)$ of $G$ and basic graph parameters of $M_p(G)$, such as its independence number $\alpha(M_p(G))$ and its clique number $\omega(M_p(G))$.
Specifically, we investigate whether a high chromatic number implies the existence of a large set of pairwise conflicting or pairwise non-conflicting edges.

\begin{definition}\label{def:sup-chi}
  Let $p$, $s$, $a$, $w \in \bZ$, with $s$, $a$, $w \geq 1$, and $M \in \bZ^{s \times 4}$. Then
  \begin{align*}
  & \Xa(M,p,a) = \sup\{\chi(G) \mid G \text{ ordered graph with } \alpha(M_p(G)) \leq a\}\\
  \text{ and } & \Xw(M,p,w) = \sup\{\chi(G) \mid G \text{ ordered graph with } \omega(M_p(G)) \leq w\}.
 \end{align*}
\end{definition}

For example, from the discussion above we have $\Xw(M^\text{cross},1,1) = 3$, as outerplanar graphs are $3$-colorable, and $\Xw(M^\text{cross},1,w) \leq 4w$ for $w\geq 2$, as ordered graphs with no $w+1$ pairwise crossing edges are $(4w-1)$-degenerate.

For $x \in \bZ$, $x \geq 1$, let\footnote{For $x\geq 1$ we have $\binom{\sqrt{2x}}{2} < x < \binom{\sqrt{2x}+1}{2}$.
Thus $f(x)=\floor{\sqrt{2x}}$ if $x < \binom{\scriptstyle\floor{\sqrt{2x}}+1}{2}$ and $f(x)=\floor{\sqrt{2x}}+1$ otherwise.} $f(x)$ be the largest integer $k$ with $\binom{k}{2}\leq x$.
For any $k \geq 2$ the complete graph $K_k$ is a $k$-chromatic graph with only $\binom{k}{2}$ edges.
Therefore for any $M \in \bZ^{s \times 4}$, $p\in\bZ$, and $k\geq 2$, we have $\alpha(M_p(K_k))$, $\omega(M_p(K_k))\leq |V(M_p(K_k))| = |E(K_k)| = \binom{k}{2}$ and thus
\begin{equation}
 \Xa(M,p,a)\geq f(a) \text{ and } \Xw(M,p,w)\geq f(w).\label{eq:X-lower-bound}
\end{equation}
We shall prove that the lower bounds in~\eqref{eq:X-lower-bound} are attained for some matrices $M$ and parameters $p$.
On the other hand, there is no general upper bound, as we shall show that $\Xa(M,p,a)=\infty$ or $\Xw(M,p,w)=\infty$ for some other matrices $M$ and parameters $p$.

As it turns out, instead of studying the functions $\Xa(M,p,a)$ and $\Xw(M,p,w)$ directly, it is often more convenient to consider their integral inverses, i.e., we consider the functions $A(M,p,k)$ and $W(M,p,k)$ defined as follows.

\begin{definition}\label{def:min-alpha}
 Let $p$, $s$, $k \in\bZ$, with $s\geq 1$, $k\geq 2$, and $M \in \bZ^{s \times 4}$. Then
 \begin{align*}
  & A(M,p,k) := \min\{\alpha(M_p(G)) \mid G \text{ ordered graph with } \chi(G) \geq k\}\\
  \text{ and } & W(M,p,k) := \min\{\omega(M_p(G)) \mid G \text{ ordered graph with } \chi(G) \geq k\}.
 \end{align*}
\end{definition}
Note that replacing the minima by maxima in Definition~\ref{def:min-alpha} is not interesting since it is almost always easy to construct bipartite ordered graphs with many pairwise conflicting and pairwise non-conflicting edges.
By considering $M_p(K_k)$, similar to above, one obtains the following bounds for any $M \in \bZ^{s \times 4}$, any $p \in \bZ$, and any $k\geq 2$
\begin{equation}
 1 \leq A(M,p,k),\ W(M,p,k) \leq \binom{k}{2}.\label{eq:general-bounds}
\end{equation}
Since the conflict graph of an ordered graph without any edges has no vertices, we exclude the case $k=1$ throughout.
While the functions from Definition~\ref{def:sup-chi} yield the smallest $k$ such that all ordered graphs without a certain pattern $P$ can be colored with less than $k$ colors, the functions from Definition~\ref{def:min-alpha} address the contraposition, namely whether every graph with chromatic number at least $k$ necessarily contains the pattern $P$.
For example, instead of proving $\Xw(M^\text{cross},1,1) \leq 3$, i.e., that every outerplanar graph is $3$-colorable, one can equivalently prove that $W(M^\text{cross},1,4) \geq 2$, i.e., that every non-$3$-colorable ordered graph has a pair of crossing edges.

More generally, $\Xa(M,p,a)$ and $A(M,p,k)$ are related by
\begin{equation}
 \Xa(M,p,a) = \sup\{k \mid A(M,p,k) \leq a\} \text{ for fixed } M,p,a,\label{eq:X-from-A}
\end{equation}
while $\Xw(M,p,w)$ and $W(M,p,k)$ are related by
\begin{equation}
 \Xw(M,p,w) = \sup\{k \mid W(M,p,k) \leq w\} \text{ for fixed } M,p,w.\label{eq:X-from-W}
\end{equation}
The advantage of the functions $A$ and $W$ is that they can be nicely expressed as polynomial type functions of $k$ and rational type functions of $p$ (see Table~\ref{tab:detailed}).

\paragraph{Our Results.}

For many matrices $M$ and parameters $p$ it turns out that $A(M,p,k)$ or $W(M,p,k)$ or both attain the lower or upper bound in~\eqref{eq:general-bounds} for all $k \geq 2$.
Focusing on $1 \times 4$--matrices, the calculation of $A(M,p,k)$ and $W(M,p,k)$ becomes non-trivial only for quite specific matrices (see first statement in Theorem~\ref{thm:generalMatrices}).
We say that a matrix $M \in \bZ^{s \times 4}$ is \emph{translation invariant} if for any vector $x \in \bZ^4$, any $p \in \bZ$ and any $t \in \bZ$ we have
\[
 Mx \leq \bp \quad \Leftrightarrow \quad M(x + \mathbf{t}) \leq \bp.
\]
Intuitively speaking, $M$ is translation invariant, if whether or not two edges are conflicting does not depend on the absolute coordinates of their endpoints, rather than their relative position to one another.
For example, when $M$ is translation invariant, then for any ordered graph $G$ and any $t \in \bZ$ we have $M_p(G) = M_p(G_t)$ where $G_t$ arises from $G$ by shifting all vertices $t$ positions to the right if $t \geq 0$, respectively $|t|$ positions to the left if $t < 0$.

As $M(x + \mathbf{t}) = Mx + M\mathbf{t}$ we immediately get the following algebraic characterization of translation invariance.

\begin{observation}
 A matrix $M \in \bZ^{s \times 4}$ is translation invariant if and only if $M\mathbf{1} = \mathbf{0}$.
\end{observation}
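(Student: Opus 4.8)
The plan is to exploit the linearity of matrix multiplication together with the identity $\mathbf{t} = t\mathbf{1}$, which gives $M(x+\mathbf{t}) = Mx + tM\mathbf{1}$ for every $x \in \bZ^4$ and every $t \in \bZ$. Both directions of the equivalence then fall out of this one identity.

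For the ``if'' direction I would suppose $M\mathbf{1} = \mathbf{0}$. Then the identity above reads $M(x+\mathbf{t}) = Mx$ for all $x$ and $t$, so the two inequalities $Mx \leq \bp$ and $M(x+\mathbf{t}) \leq \bp$ are literally the same statement; in particular they are equivalent for every $x$, $p$, and $t$, which is precisely the definition of translation invariance.

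For the ``only if'' direction I would assume $M$ is translation invariant and apply the definition to the single convenient instance $x = \mathbf{0} \in \bZ^4$ and $p = 0$. Since $M\mathbf{0} = \mathbf{0} \leq \mathbf{0} = \bp$, the left-hand inequality holds, so translation invariance forces $M(\mathbf{0}+\mathbf{t}) \leq \mathbf{0}$ for every $t \in \bZ$; by the identity this is $tM\mathbf{1} \leq \mathbf{0}$ for all $t \in \bZ$. Plugging in $t = 1$ gives $M\mathbf{1} \leq \mathbf{0}$ and $t = -1$ gives $-M\mathbf{1} \leq \mathbf{0}$, i.e.\ $M\mathbf{1} \geq \mathbf{0}$, and together these yield $M\mathbf{1} = \mathbf{0}$.

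There is essentially no real obstacle here; the only point worth stating carefully is that the definition of translation invariance quantifies over \emph{all} $x$, $p$, and $t$, so in the ``only if'' direction we are entitled to test it on the one instance $x = \mathbf{0}$, $p = 0$ and then let $t$ range over $\{1,-1\}$ to pin down $M\mathbf{1}$ coordinate by coordinate.
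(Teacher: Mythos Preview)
Your proof is correct and rests on exactly the same identity the paper uses, namely $M(x+\mathbf{t}) = Mx + M\mathbf{t} = Mx + tM\mathbf{1}$; the paper in fact does not prove the observation at all but simply records this identity in the sentence preceding it and declares the claim immediate. Your argument just spells out both directions of that immediacy, so there is nothing to add.
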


In other words, a matrix $M$ is translation invariant if and only if in each row of $M$ the entries sum to $0$.
We give several conditions for matrices $M$ and parameters $p$ under which $A(M,p,k)$ or $W(M,p,k)$ or both attain the lower or upper bound in~\eqref{eq:general-bounds}.

\begin{theorem}\label{thm:generalMatrices}
 Let $s$, $k$, $p\in\bZ$, with $s\geq 1$, $k\geq 2$, and $M \in \bZ^{s \times 4}$.
 If $M$ is not translation invariant, then $W(M,p,k) = 1$.
 Moreover, if $M\mathbf{1}>\mathbf{0}$ or $M\mathbf{1}<\mathbf{0}$, then $A(M,p,k) = 1$.
 
 \noindent If $M=\M{m_1~}{m_2~}{m_3~}{m_4}\in\bZ^{1\times 4}$ is translation invariant, then each of the following holds.
 
 \begin{enumerate}[label = (\roman*)]
  \item If $m_2 + m_4 \geq\max\{p,0\}$, then $A(M,p,k) = 1$ and $W(M,p,k)=\binom{k}{2}$.\label{enum:m2m40}
  
  \item If $m_1+m_2 = m_3+m_4 = 0$, $m_2, m_4 \leq 0$ and $p > m_2+m_4$, then $A(M,p,k) = \binom{k}{2}$ and $W(M,p,k)=1$.\label{enum:m1m20}
  
  \item If $m_2+m_4 > 0$ or ($m_2+m_4 = 0$ and $m_1,m_2 \neq 0$), then $A(M,p,k)=1$.\label{enum:m2m4greater0}
    
  \item If $m_2,m_4 < 0$, then $W(M,p,k)=1$.\label{enum:m2m4smaller0}
 \end{enumerate}
 
 \noindent
 In all cases above we have $A(M,p,k) = \alpha(M_p(K_k))$ for some ordered graph $K_k$ and $W(M,p,k) = \omega(M_p(K_k))$ for some ordered graph $K_k$.
\end{theorem}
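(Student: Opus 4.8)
The plan is to show that in every case the extremal value of $A(M,p,k)$ or $W(M,p,k)$ is attained by a single suitably embedded copy of the complete graph $K_k$, so that the whole theorem reduces to a handful of elementary estimates on the linear form $\ell(e_1,e_2):=M(u_1,v_1,u_2,v_2)^\top$, where $e_1=(u_1,v_1)$ and $e_2=(u_2,v_2)$ and $\ell$ is a scalar when $s=1$. Two facts carry almost all the weight. First, if $M=\M{m_1~}{m_2~}{m_3~}{m_4}$ is translation invariant then $m_1+m_3=-(m_2+m_4)$, and substituting $v_i=u_i+d_i$ with $d_i=v_i-u_i\geq 1$ yields
\[
 \ell(e_1,e_2)=(m_1+m_2)(u_1-u_2)+m_2 d_1+m_4 d_2,\qquad \ell(e_1,e_2)+\ell(e_2,e_1)=(m_2+m_4)(d_1+d_2).
\]
Second, for a general $M\in\bZ^{s\times 4}$ and $t\in\bZ$, shifting every vertex of $G$ by $t$ changes the $r$-th coordinate of $\ell(e_1,e_2)$ by exactly $(M\mathbf{1})_r t$, which dominates the bounded contribution of the relative positions of the four endpoints once $|t|$ is large. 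We also use repeatedly that a graph with $\chi\geq k$ contains a $k$-critical subgraph, hence has at least $\binom{k}{2}$ edges, with equality for $K_k$; this is what pins down the value $\binom{k}{2}$ whenever the relevant conflict graph turns out to be complete or edgeless.

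First I would dispatch the non-translation-invariant claim and the $M\mathbf{1}>\mathbf{0}$ / $M\mathbf{1}<\mathbf{0}$ claims by taking $G$ to be $K_k$ on $\{t+1,\dots,t+k\}$. If some row $r$ satisfies $(M\mathbf{1})_r\neq 0$, then sending $t\to-\infty$ when $(M\mathbf{1})_r>0$ and $t\to+\infty$ when $(M\mathbf{1})_r<0$ pushes coordinate $r$ of both $\ell(e_1,e_2)$ and $\ell(e_2,e_1)$ below $p$ for all of the finitely many pairs of edges at once, so $M_p(G)$ is edgeless and $W(M,p,k)=\omega(M_p(G))=1$. If $M\mathbf{1}>\mathbf{0}$ (resp.\ $<\mathbf{0}$), then $t\to+\infty$ (resp.\ $t\to-\infty$) sends every coordinate of $\ell(e_1,e_2)$ to $+\infty$, so every pair conflicts, $M_p(G)$ is complete, and $A(M,p,k)=\alpha(M_p(G))=1$. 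For~\ref{enum:m2m40}, the second displayed identity together with $d_1+d_2\geq 2$, $m_2+m_4\geq\max\{p,0\}$ and a case split on the sign of $p$ gives $\ell(e_1,e_2)+\ell(e_2,e_1)\geq 2p$ for every pair of edges of every ordered graph, hence $\max\{\ell(e_1,e_2),\ell(e_2,e_1)\}\geq p$ and every pair conflicts; so $M_p(G)$ is always complete, $\alpha(M_p(G))=1$ and $\omega(M_p(G))=|E(G)|$ for all such $G$, whence $A(M,p,k)=1$ and $W(M,p,k)=\binom{k}{2}$, both realised by $K_k$. For~\ref{enum:m1m20}, the hypothesis $m_1+m_2=0$ kills the first summand of $\ell$ and $m_2,m_4\leq 0$ forces $\ell(e_1,e_2)=m_2 d_1+m_4 d_2\leq m_2+m_4<p$, and symmetrically for $\ell(e_2,e_1)$, so $M_p(G)$ is always edgeless; hence $W(M,p,k)=1$ and $A(M,p,k)=\binom{k}{2}$, both realised by $K_k$.

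The remaining two parts use spread-out embeddings of $K_k$. For~\ref{enum:m2m4greater0}: if $m_2+m_4>0$, embed $K_k$ on $\{N,2N,\dots,kN\}$ so $d_1,d_2\geq N$; then $\ell(e_1,e_2)+\ell(e_2,e_1)=(m_2+m_4)(d_1+d_2)\geq 2N(m_2+m_4)\geq 2p$ once $N\geq p/(m_2+m_4)$, so every pair conflicts and $A(M,p,k)=1$. If instead $m_2+m_4=0$ with $m_1,m_2\neq 0$, then $M=\M{m_1~}{m_2~}{-m_1~}{-m_2}$ and $\ell(e_1,e_2)=m_1(u_1-u_2)+m_2(v_1-v_2)=-\ell(e_2,e_1)$; for $p\leq 0$ every pair conflicts trivially, and for $p\geq 1$ embed $K_k$ on $\{C,C^2,\dots,C^k\}$: for distinct edges $m_1(C^i-C^{i'})+m_2(C^j-C^{j'})$ has a unique dominant power of $C$ with a coefficient equal to $\pm m_1$ or $\pm m_2$ (the exponent $\max\{j,j'\}$ in general, but if $j=j'$ the $C^{j}$-terms cancel and it becomes $\max\{i,i'\}$ with coefficient $\pm m_1$), so $|\ell(e_1,e_2)|\geq p$ for $C$ large and every pair conflicts, giving $A(M,p,k)=1$. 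For~\ref{enum:m2m4smaller0}, with $m_2,m_4<0$, embed $K_k$ on $\{C,C^2,\dots,C^k\}$ again; in $\ell(e_1,e_2)=m_1 C^i+m_2 C^j+m_3 C^{i'}+m_4 C^{j'}$ the top exponent is $\max\{j,j'\}$ because $i<j$ and $i'<j'$, and the coefficient of that power is $m_2$, $m_4$, or $m_2+m_4$, all strictly negative, so both $\ell(e_1,e_2)$ and $\ell(e_2,e_1)$ tend to $-\infty$ as $C\to\infty$; hence for $C$ large $M_p(G)$ is edgeless and $W(M,p,k)=1$. In every case the witnessing graph is an embedded $K_k$, which also gives the last sentence of the theorem.

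\textbf{Main obstacle.} The only genuine work is the dominant-term bookkeeping in~\ref{enum:m2m4greater0} and~\ref{enum:m2m4smaller0}: one must check that, simultaneously over all pairs of distinct edges of $K_k$, the exponential embedding really does leave exactly one power of $C$ in the lead, that translation invariance — here through the vanishing of $m_2+m_4$ or of $m_1+m_2$ — is precisely what keeps its coefficient away from $0$, and then turn the ``$C\to\infty$'' limits into an explicit threshold; a crude bound of the remaining terms by $3\max_i|m_i|\cdot C^{h-1}$ against the leading $C^h$ makes this routine. Everything else is the two displayed identities and the fact that a $k$-chromatic graph has at least $\binom{k}{2}$ edges.
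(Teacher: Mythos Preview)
Your proposal is correct and follows essentially the same strategy as the paper: in every case the witness is a suitably embedded $K_k$, and the analysis reduces to the two identities $\ell(e_1,e_2)+\ell(e_2,e_1)=(m_2+m_4)(d_1+d_2)$ and $\ell(e_1,e_2)=(m_1+m_2)(u_1-u_2)+m_2d_1+m_4d_2$ together with the shift-by-$t$ argument for the non-invariant case. Your treatment is in fact slightly cleaner in two places --- for~\ref{enum:m2m4greater0} with $m_2+m_4>0$ the arithmetic embedding $\{N,2N,\dots,kN\}$ already suffices via the sum identity (the paper uses geometric spacing there too), and for the $m_2+m_4=0$ sub-case and for~\ref{enum:m2m4smaller0} your dominant-power-of-$C$ bookkeeping replaces the paper's coprimality trick (choosing $q\nmid m_1$) --- but these are cosmetic variations on the same argument.
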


In case $M$ and $p$ do not satisfy any of the requirements of Theorem~\ref{thm:generalMatrices}, the exact behavior of $A(M,p,k)$ and $W(M,p,k)$ can be non-trivial.
We determine $A(M,p,k)$ and $W(M,p,k)$ exactly for all $M \in \{-1,0,1\}^{1 \times 4}$ and $p,k \in \bZ$, $k \geq 2$.

\begin{theorem}\label{thm:specialMatrices}
 For all $p$, $k\in\bZ$, $k \geq |p|+2$ and matrices $M\in\{-1,0,1\}^{1\times 4}$ we have $A(M,p,k) = \alpha(M_p(K_k))$ for some ordered graph $K_k$ and $W(M,p,k) = \omega(M_p(K_k))$ for some ordered graph $K_k$.
 
 The exact values of $A(M,p,k)$ and $W(M,p,k)$ are given in Table~\ref{tab:detailed}.
\end{theorem}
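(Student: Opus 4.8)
The plan is to run through all matrices $M\in\{-1,0,1\}^{1\times 4}$ and all $p\in\bZ$ by a case analysis, using the column-swap identity $M_p(G)=M'_p(G)$ to identify $M$ with its mirror $M'$ and thereby halve the number of cases, and using Theorem~\ref{thm:generalMatrices} to dispose of most of them. If $M$ is not translation invariant, then $M\mathbf{1}$ is a single nonzero integer, hence positive or negative, so Theorem~\ref{thm:generalMatrices} already gives $A(M,p,k)=W(M,p,k)=1$ together with the witnessing $K_k$. If $M=(m_1,m_2,m_3,m_4)$ is translation invariant (entries in $\{-1,0,1\}$ summing to $0$), then up to the mirror symmetry there is only a short list of possibilities, and for each one checks which of conditions \ref{enum:m2m40}--\ref{enum:m2m4smaller0} of Theorem~\ref{thm:generalMatrices} applies, and for which range of $p$; whenever some condition applies, $A(M,p,k)$ or $W(M,p,k)$ or both are pinned down, again with a witnessing $K_k$. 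What is left after this bookkeeping is a short explicit list of pairs $(M,p)$, and for each of them we must compute $A(M,p,k)$ and $W(M,p,k)$ for $k\geq |p|+2$ and check that they match Table~\ref{tab:detailed}.

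For each of the remaining pairs $(M,p)$ the first step is to rewrite the defining inequality $M(u_1,v_1,u_2,v_2)^\top\geq\bp$ together with its mirror as a transparent combinatorial condition on two edges of an ordered graph. For one-row matrices with $\{-1,0,1\}$ entries this is always of one of a few simple types: one of the two edges is \emph{short}, meaning its length $v-u$ is at most a threshold determined by $p$ (or, dually, long); or the two edges share, resp.\ do not share, a common left or right endpoint; or their left endpoints, resp.\ their right endpoints, resp.\ their interval midpoints, differ by at least $p$; or the two edges are \emph{well separated}, one lying entirely to the right of the other with a gap of at least $p$. From such a description one reads off the conflict and the non-conflict relation explicitly, and in particular one can evaluate $\alpha(M_p(K_k))$ and $\omega(M_p(K_k))$ over all orderings of $K_k$: the optimum is typically attained either by placing the $k$ vertices consecutively at $\{1,\dots,k\}$ or by spreading them far apart, whichever minimizes the relevant parameter. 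This is exactly where the hypothesis $k\geq |p|+2$ enters: for smaller $k$ one could squeeze $K_k$ into, or stretch it across, an interval so short that the parameter collapses to a boundary value, whereas for $k\geq|p|+2$ one lands in the generic regime in which the best ordered $K_k$ is extremal. These evaluations are elementary and produce the candidate table values, together with the orderings of $K_k$ (possibly different ones for $A$ and for $W$) that realize them.

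It remains to prove the matching lower bounds, i.e.\ that no ordered graph $G$ with $\chi(G)\geq k$ can beat the ordered $K_k$ found above. The uniform device is to pass to a $k$-critical subgraph $G'\subseteq G$ (so $\delta(G')\geq k-1$), to write $V(G')=\{v_1<\dots<v_n\}$ with $n\geq k$, and to let $r_i$ and $\ell_i$ count the neighbors of $v_i$ to its right and to its left; then $\ell_i\leq i-1$ forces $r_i\geq\deg_{G'}(v_i)-(i-1)\geq k-i$, while $r_i\leq n-i$. The desired pattern is then charged to the vertices through their right-neighborhoods. For example, when conflict means ``one edge short with threshold $t$'', a vertex $v_i$ is the left endpoint of at least $\max\{0,r_i-t\}$ long edges, and summing $\max\{0,r_i-t\}\geq\max\{0,k-i-t\}$ over $i$ reproduces precisely the count obtained for the consecutive $K_k$; since all long edges are pairwise non-conflicting, this bounds $\alpha(M_p(G))$ below by $\alpha(M_p(K_k))$, and the symmetric argument bounds $\omega$ below in the ``one edge long'' cases. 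When conflict means ``left (or right) endpoints at distance $\geq p$'' one partitions $\bZ$ into blocks of $p$ consecutive integers, observes that edges with a common block are pairwise non-conflicting while a clique of $M_p(G')$ meets each block at most once, and again feeds in $r_i\geq k-i$ to force enough occupied blocks; the ``shared endpoint'' cases reduce to a plain degeneracy bound on $G$, and the remaining types (well-separated edges, midpoints at bounded distance) are handled by analogous block-partition and charging arguments. Carrying this out for every pair $(M,p)$ on the list and comparing with Table~\ref{tab:detailed} finishes the proof. The main obstacle is not any single one of these arguments but keeping the counting tight: crude estimates already reproduce the right polynomial in $k$ and rational function of $p$ up to lower-order error, and it is precisely the sharp bound $r_i\geq k-i$ from criticality, combined with clamping the per-vertex contributions at $0$, that is needed to hit the exact extremal value realized by $K_k$ rather than merely an asymptotic match.
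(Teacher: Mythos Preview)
Your overall plan coincides with the paper's: reduce by symmetries, dispose of most cases via Theorem~\ref{thm:generalMatrices}, and for each survivor read off a combinatorial description of the conflict, evaluate $\alpha,\omega$ on a well-chosen ordered $K_k$, and prove the matching lower bound by passing to a $k$-critical subgraph and charging through $r_i\geq k-i$. The paper organizes this as Propositions~\ref{P0M0}--\ref{M00P}. Two minor omissions: you use only the column-swap symmetry, while the paper also exploits $M\mapsto -\overline{M}$ (Lemma~\ref{lem:small-facts}\ref{enum:reverse-matrix}) and an $A\leftrightarrow W$ exchange (Lemma~\ref{lem:small-facts}\ref{enum:exchange}) to cut the list further; and your inventory of conflict types misses ``lengths differ by at least $p$'' and ``lengths sum to at least $p$'', which account for $\M{+}{-}{-}{+}$, $\M{-}{+}{-}{+}$ and $\M{+}{-}{+}{-}$ (Propositions~\ref{MPMP}--\ref{PMMP}). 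These are not fatal, just more cases to write out.

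There is, however, one place where ``analogous block-partition and charging'' genuinely does not suffice: the matrix $M=\M{+}{0}{0}{-}$ (well-separated edges). For $p\geq 0$ the target is $W(M,p,k)=\ceil{(k-1)/(p+1)}$, equivalently: if no $t+1$ edges of $G$ are pairwise $p$-separated then $\chi(G)\leq (p+1)t+1$. A per-vertex charging argument in a $k$-critical subgraph will not produce $t+1$ pairwise-separated edges; what is needed is to cover \emph{all} edges of $G$ by $t$ sets, each pinned to a short window, and then color the vertices by these windows. The paper gets this by observing that ``$e_1$ lies at least $p$ left of $e_2$'' is a partial order, so $M_p(G)$ is a comparability graph and hence perfect (Lemma~\ref{lem:left-of-is-poset}); thus $\chi(M_p(G))=\omega(M_p(G))=t$, the edges split into $t$ independent sets, each of which by Lemma~\ref{lem:independent-sets} is confined to a window of length $\leq p-1$, and this yields the $p$-almost $(t+1)$-coloring of Lemma~\ref{lem:a-almost-t-colorable}. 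Without perfectness (equivalently, without recognizing the non-conflict graph as an interval graph) you only know the conflict graph is $K_{t+1}$-free, which is not enough to cover its vertices by $t$ independent sets. The $A$-values $\lfloor(k+1)^2/4\rfloor-1$ and $\lfloor(k+p)^2/4\rfloor$ for this same matrix likewise lean on the structural description of independent sets in Lemma~\ref{lem:independent-sets}, at least on the $K_k$ side; the lower bounds there are indeed charging arguments of your type.
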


Whenever $k < |p|+2$ the exact values follow from Theorem~\ref{thm:generalMatrices} or are given in Propositions~\ref{P0M0}~--~\ref{M00P}.
Finally, we consider the $2 \times 4$--matrix $M^\text{nest} = \MM{+}{0}{-}{0}{0}{-}{0}{+}$ that is related to nesting edges, see Figure~\ref{fig:Example-Matrices} top middle.
Dujmovi\'{c} and Wood~\cite{DW04} give upper and lower bounds on $\Xw(M^\text{nest},1,w)$ and ask for the exact value.

\begin{theorem}\label{thm:nest}
 Let $M = \MM{+}{0}{-}{0}{0}{-}{0}{+}$.
 \begin{compactitem}[\enskip]
  \item If $p \geq 1$, then  $A(M,p,k) = 2k-3$ and $\frac{k}{4p} \leq W(M,p,k) \leq  \ceil{\frac{k-1}{2p}}$ for all $k \geq 2$.
 
  \item If $p \leq 0$, then $A(M,p,k)=\ceil{\frac{k-1}{1-p}}$ and $W(M,p,k) = k-1$ for all $k \geq 2$.
 \end{compactitem}
\end{theorem}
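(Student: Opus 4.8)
The plan is to prove, for each of the four quantities, a matching upper and lower bound. For every upper bound I exhibit one suitable $k$-chromatic ordered graph and compute the relevant parameter of its conflict graph; for every lower bound I argue about an arbitrary $k$-chromatic ordered graph, using the standard fact that a graph with $\chi\ge k$ has a subgraph of minimum degree $\ge k-1$. The organising observation is that for the matrix $M$ the conflicting pairs of edges form, when $p\ge 1$, the strict partial order ``$e$ is nested under $e'$ with margin $p$'' on $E(G)$, whereas when $p\le 0$ it is the \emph{non}-conflicting pairs that form a strict partial order, namely ``$e\prec e'$: both endpoints of $e$ lie at least $d:=1-p$ to the left of the corresponding endpoints of $e'$''. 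Thus a clique (resp.\ an independent set) of $M_p(G)$ is a chain (resp.\ an antichain) of that order, Dilworth--Mirsky becomes available, and decreasing $p$ only adds conflicts.

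For the upper bounds: if $p\ge 1$, placing the vertices of $K_k$ at $1,p{+}1,2p{+}1,\dots$ makes two edges conflict exactly when one is order-nested in the other, so $\alpha(M_p(K_k))=2k-3$ (the $\ge 2k-3$ edges at the leftmost and rightmost vertices are pairwise non-nested, and an antichain of subintervals of $\{1,\dots,k\}$ has at most $2k-3$ elements, seen by grouping intervals by their left endpoint), while leaving the vertices at $1,\dots,k$ makes the longest tower $[1{+}ip,k{-}ip]$ have $\lceil(k-1)/(2p)\rceil$ edges and a length count shows none is longer, so $\omega(M_p(K_k))=\lceil(k-1)/(2p)\rceil$. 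If $p\le 0$, with vertices at $1,\dots,k$ two edges conflict unless both their left- and their right-endpoint gaps are $\ge d$, so a largest independent set is a $d$-staircase of subintervals, which has at most $\lceil(k-1)/d\rceil=\lceil(k-1)/(1-p)\rceil$ edges and that many are attained; spreading the vertices out with spacing $>|p|$ turns conflict into ordinary containment, so $\omega(M_p(K_k))$ becomes the longest nested chain, $k-1$.

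Two of the lower bounds are then immediate. For $A$ with $p\ge 1$: in a subgraph of minimum degree $\ge k-1$ the $\ge k-1$ edges at the leftmost vertex together with the $\ge k-1$ edges at the rightmost vertex are pairwise non-nested, an independent set of size $\ge 2k-3$. For $W$ with $p\le 0$: two edges sharing a vertex always conflict, so the edges at the leftmost vertex of any ordered graph form a clique of $M_p$; hence that vertex has degree $\le\omega(M_p(G))$, every subgraph inherits this, $G$ is $\omega(M_p(G))$-degenerate, and $\chi(G)\le\omega(M_p(G))+1$. The remaining two lower bounds I would derive from a base case plus a divisibility reduction. For $A$ with $p\le 0$ it suffices to show $\chi(G)\le\alpha(M_0(G))+1$: colour $v$ by the length $C(v)$ of the longest chain in the order $\prec$ for $d=1$ (i.e.\ strictly increasing in both endpoints) whose largest right endpoint is at most $v$; if $uv\in E(G)$ with $u<v$ then any optimal chain for $C(u)$ can be extended by the edge $uv$ (its last edge has both endpoints $\le u<v$), so $C(v)\ge C(u)+1$ and the colouring is proper, using $\le\alpha(M_0(G))+1$ colours. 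Then, for general $p\le 0$, a $\prec$-chain of $\ge k-1$ edges for $d=1$ has strictly increasing integer left endpoints and strictly increasing integer right endpoints, so retaining every $d$-th of its edges yields a $d$-staircase of $\ge\lceil(k-1)/d\rceil$ edges. Symmetrically, for $W$ with $p\ge 1$ it suffices to treat $p=1$: if $\omega(M_1(G))\le w$ then by Mirsky $E(G)$ is a union of $w$ antichains of the strict-nesting order, each having $\le 2n-3$ edges on $n$ vertices, so every subgraph $G'$ has $|E(G')|<2w|V(G')|$, hence $G$ is $(4w-1)$-degenerate and $\chi(G)\le 4w$, i.e.\ $\omega(M_1(G))\ge k/4$; and a strict-nesting tower has strictly decreasing left endpoints and strictly increasing right endpoints, so every $p$-th of its edges forms a tower with margin $p$, giving $\omega(M_p(G))\ge k/(4p)$.

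The point needing the most care is the $W$-lower bound for $p\ge 1$: one must make the degeneracy estimate come out to exactly $4pw$ so that $W(M,p,k)\ge k/(4p)$ holds with the stated constant. Via the $p=1$ reduction this rests on the bound ``an antichain of subintervals of an $n$-point line has at most $2n-3$ edges'', which is exactly the quantity $\alpha$ computed for $K_k$, so one should set that computation up carefully (grouping by left endpoint and summing block lengths) and verify that the $p=1$ case really yields $\chi\le 4\,\omega(M_1(G))$ with no loss, and that the upper bound $\lceil(k-1)/(2p)\rceil$ is genuinely attained; everything else --- the $K_k$ computations and the colouring by $C(v)$ --- is routine once the partial-order picture is in place.
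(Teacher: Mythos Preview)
The proposal is correct. Your approach is essentially the paper's proof unpacked: where the paper passes to the complementary ``shift'' matrix $\left(\begin{smallmatrix}+&0&-&0\\0&+&0&-\end{smallmatrix}\right)$ via a complement lemma and then quotes its earlier one-row propositions (and Dujmovi\'c--Wood for the value $2k-3$), you identify the two partial orders directly on $M^{\text{nest}}$ and redo the relevant computations in place. The $K_k$ upper-bound constructions, the leftmost-plus-rightmost independent set giving $A\ge 2k-3$, the leftmost-vertex degeneracy giving $W\ge k-1$, and the edge-count degeneracy giving $W\ge k/(4p)$ are all the same in substance (your Mirsky-into-antichains step and the paper's grouping by midpoints yield the identical inequality $|E|\le (2n-3)w$). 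Your longest-chain colouring $v\mapsto C(v)$ for the lower bound $A\ge\lceil(k-1)/(1-p)\rceil$ is a cleaner, more self-contained argument than the paper's detour through the $p$-almost-$(t{+}1)$-colourable machinery.

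One small slip to fix: the blanket claim ``two edges sharing a vertex always conflict'' for $p\le 0$ is false --- for $p=0$ the edges $(u,v)$ and $(v,w)$ with $u<v<w$ satisfy $e\prec e'$ and hence are \emph{non}-conflicting. What you actually use, and what is true, is that edges at the \emph{leftmost} vertex all share their left endpoint, so no $\prec$-relation can hold between them and they do pairwise conflict; the degeneracy argument then goes through unchanged.
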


\begin{table}[htbp]
 \small
 \centering
 \begin{tabular}{|c|c|c|c|}
  \hline
  $M$ & $p$ & $A(M,p,k)$ & $W(M,p,k)$ \Tstrut\Bstrut\\
  
  \hline
  \hline
  
  $M\mathbf{1}\neq 0$ & $p\in\bZ$  & \gray{$1$} & \gray{$1$} \Tstrut\Bstrut\\
  
  \hline
  \hline
  
  \multirow{2}{*}{$\M{0}{0}{0}{0}$}
  & $p \leq 0$ & \gray{$1$} & \gray{$\binom{k}{2}$} \Tstrut\\
  & $p > 0$    & \gray{$\binom{k}{2}$} & \gray{$1$} \Tstrut\Bstrut\\
  
  \hline
  
  $\M{+}{0}{-}{0}^\star$ & \multirow{2}{*}{$p \leq 0$} & \multirow{2}{*}{\gray{$1$}} & \multirow{2}{*}{\gray{$\binom{k}{2}$}} \Tstrut\\
  $\M{-}{0}{+}{0}$ & & & \\
  $\M{0}{+}{0}{-}$ & \multirow{2}{*}{$p > 0$} & \multirow{2}{*}{$k - 1$} & \multirow{2}{*}{$\ceil{\frac{k-1}{p}}$} \\
  $\M{0}{-}{0}{+}$ & & & \Bstrut\\
 
  \hline
 
  $\M{-}{+}{0}{0}^\star$ & $p \leq 1$ & \gray{$1$} & \gray{$\binom{k}{2}$} \Tstrut\\
  $\M{0}{0}{-}{+}$ & $p \geq 1$ & \gray{$1$} & $1+ \binom{k-p+1}{2}$ \Tstrut\Bstrut\\
  
  \hline
  
  $\M{+}{-}{0}{0}^\star$ & $p \leq 0$ & $\binom{k+p}{2}$ & \gray{$1$} \Tstrut\\
  $\M{0}{0}{+}{-}$ & $p \geq 0$ & \gray{$\binom{k}{2}$} & \gray{$1$} \Tstrut\Bstrut\\
  
  \hline
  
  \multirow{2}{*}{$\M{-}{+}{-}{+}^\star$}
  & $p \leq 2$ & \gray{$1$} & \gray{$\binom{k}{2}$} \Tstrut\\
  & $p \geq 2$ & \gray{$1$} & $p \bmod 2 + \binom{k-\ceil{\frac{p}{2}}+1}{2}$ \Tstrut\Bstrut\\
  
  \hline
  
  \multirow{2}{*}{$\M{+}{-}{+}{-}$}
  & $p \leq -1$ & $(1-p) \bmod 2 + \binom{k-\ceil{\frac{1-p}{2}}+1}{2}$ & \gray{$1$} \Tstrut\\
  & $p \geq -1$ & \gray{$\binom{k}{2}$} & \gray{$1$} \Tstrut\Bstrut\\
  
  \hline
  
  $\M{+}{-}{-}{+}^\star$ & $p \leq 0$ & \gray{$1$} & \gray{$\binom{k}{2}$} \Tstrut\\
  $\M{-}{+}{+}{-}$ & $p > 0$ & \gray{$1$} & $\ceil{\frac{k-1}{p}}$ \Tstrut\Bstrut\\
  
  \hline
  
  $\M{+}{+}{-}{-}^\star$ & $p \leq 0$ & \gray{$1$} & \gray{$\binom{k}{2}$} \Tstrut\\
  $\M{-}{-}{+}{+}$ & $p > 0$ & \gray{$1$} & $\ceil{\frac{2k-3}{p}}$ \Tstrut\Bstrut\\
  
  \hline
  
  $\M{+}{0}{0}{-}^\star$ & $p \leq 0$ & $\floor{\frac{(k+p)^2}{4}}$ & $k-1$ \Tstrut\\
  $\M{0}{-}{+}{0}$ & $p > 0$ & $\floor{\frac{(k+1)^2}{4}}-1$ & $\ceil{\frac{k-1}{p+1}}$ \Tstrut\Bstrut\\
  
  \hline
  
  $\M{-}{0}{0}{+}^\star$ & $p \leq 1$ & \gray{$1$} & \gray{$\binom{k}{2}$} \Tstrut\\
  $\M{0}{+}{-}{0}$ & $p \geq 2$ & \gray{$1$} & $\binom{k-p+2}{2}$ \Tstrut\Bstrut\\
  
  \hline
  \hline
  
  \multirow{2}{*}{$\MM{+}{0}{-}{0}{0}{+}{0}{-}^\star$}
  & $p \leq 0$ & $\frac{k}{4(1-p)} \leq \cdot \leq \ceil{\frac{k-1}{2(1-p)}}$ & $2k-3$ \Tstrut\\
  & $p > 0$ & $k-1$ & $\ceil{\frac{k-1}{p}}$ \Tstrut\Bstrut\\
  
  \hline
  
  \multirow{2}{*}{$\MM{+}{0}{-}{0}{0}{-}{0}{+}$}
  & $p \leq 0$ & $\ceil{\frac{k-1}{1-p}}$ & $k-1$ \Tstrut\\
  & $p > 0$ & $2k-3$ & $\frac{k}{4p} \leq \cdot \leq \ceil{\frac{k-1}{2p}}$ \Tstrut\Bstrut\\
  \hline
 \end{tabular}
 \caption{\small Values of $A(M,p,k)$ and $W(M,p,k)$ for $p$, $k\in\bZ$, $k \geq |p|+2$ and matrices $M$.
 The first row covers all non-translation invariant $M \in \bZ^{1 \times 4}$, rows 2nd to 11th cover all translation-invariant $M\in\{-1,0,1\}^{1\times 4}$, and the last two rows cover two $M \in \{-1,0,1\}^{2 \times 4}$. Gray entries follow from Theorem~\ref{thm:generalMatrices}, results in last two rows follow from Theorem~\ref{thm:nest} and Proposition~\ref{P0M00P0M} in Section~\ref{sec:nest}; remaining entries are proven in Propositions~\ref{P0M0}~--~\ref{M00P} in Section~\ref{sec:proofs} using the matrices marked with $^\star$.}
 \label{tab:detailed}
\end{table}

The values and bounds for $\Xa(M,p,a)$ and $\Xw(M,p,w)$ corresponding to the results above are calculated using the identities~\eqref{eq:X-from-A} and~\eqref{eq:X-from-W} and given in Table~\ref{tab:Xdetailed}.
By definition of $f(x)$, the upper or lower bounds in~\eqref{eq:general-bounds} translate as follows.

\smallskip

\begin{compactitem}[\leftmargin = 0pt]
 \item If $A(M,p,k) = 1$ for all $k \geq 2$, then $\Xa(M,p,a) = \infty$ for all $a \geq 1$.
 \item If $A(M,p,k) = \binom{k}{2}$ for all $k \geq 2$, then $\Xa(M,p,a) = f(a)$ for all $a \geq 1$.
 \item If $W(M,p,k) = 1$ for all $k \geq 2$, then $\Xw(M,p,w) = \infty$ for all $w \geq 1$.
 \item If $W(M,p,k) = \binom{k}{2}$ for all $k \geq 2$, then $\Xw(M,p,w) = f(w)$ for all $w \geq 1$.
\end{compactitem}

\smallskip

\begin{table}[htbp]
 \small
 \centering
 \begin{tabular}{|c|c|c|c|}
  \hline
  $M$ & $p$ & $\Xa(M,p,a)$ & $\Xw(M,p,w)$ \Tstrut\Bstrut\\
  
  \hline
  \hline
  
  $M\mathbf{1}\neq 0$ & $p\in\bZ$  & \gray{$\infty$} & \gray{$\infty$} \Tstrut\Bstrut\\
    
  \hline
  \hline
  
  \multirow{2}{*}{$\M{0}{0}{0}{0}$}
  & $p \leq 0$ & \gray{$\infty$} & \gray{$f(w)$} \Tstrut\\
  & $p > 0$    & \gray{$f(a)$} & \gray{$\infty$} \Tstrut\Bstrut\\
  
  \hline
  
  $\M{+}{0}{-}{0}$ & \multirow{2}{*}{$p \leq 0$} & \multirow{2}{*}{\gray{$\infty$}} & \multirow{2}{*}{\gray{$f(w)$}} \Tstrut\\
  $\M{-}{0}{+}{0}$ & & & \\
  $\M{0}{+}{0}{-}$ & \multirow{2}{*}{$p > 0$} & \multirow{2}{*}{$a + 1$} & \multirow{2}{*}{$pw+1.$} \\
  $\M{0}{-}{0}{+}$ & & & \Bstrut\\
 
  \hline
 
  $\M{-}{+}{0}{0}$ & $p \leq 1$ & \gray{$\infty$} & \gray{$f(w)$} \Tstrut\\
  $\M{0}{0}{-}{+}$ & $p \geq 1$ & \gray{$\infty$} & $f(w-1)+p-1$ \Tstrut\Bstrut\\
  
  \hline
  
  $\M{+}{-}{0}{0}$ & $p \leq 0$ & $f(a)-p$ & \gray{$\infty$} \Tstrut\\
  $\M{0}{0}{+}{-}$ & $p \geq 0$ & \gray{$f(a)$} & \gray{$\infty$} \Tstrut\Bstrut\\
  
  \hline
  
  \multirow{2}{*}{$\M{-}{+}{-}{+}$}
  & $p \leq 2$ & \gray{$\infty$} & \gray{$f(w)$} \Tstrut\\
  & $p \geq 2$ & \gray{$\infty$} & {\footnotesize $f(w-p\bmod{2})+\ceil{\frac{p-2}{2}}$} \Tstrut\Bstrut\\
  
  \hline
  
  \multirow{2}{*}{$\M{+}{-}{+}{-}$}
  & $p \leq -1$ & {\footnotesize $f(a-(1-p)\bmod{2})+\ceil{\frac{-(p+1)}{2}}$} & \gray{$\infty$} \Tstrut\\
  & $p \geq -1$ & \gray{$f(a)$} & \gray{$\infty$} \Tstrut\Bstrut\\
  
  \hline
  
  $\M{+}{-}{-}{+}$ & $p \leq 0$ & \gray{$\infty$} & \gray{$f(w)$} \Tstrut\\
  $\M{-}{+}{+}{-}$ & $p > 0$ & \gray{$\infty$} & $pw+1$ \Tstrut\Bstrut\\
  
  \hline
  
  $\M{+}{+}{-}{-}$ & $p \leq 0$ & \gray{$\infty$} & \gray{$f(w)$} \Tstrut\\
  $\M{-}{-}{+}{+}$ & $p > 0$ & \gray{$\infty$} & $\floor{\frac{pw+3}{2}}$ \Tstrut\Bstrut\\
  
  \hline
  
  $\M{+}{0}{0}{-}$ & $p \leq 0$ & $\floor{\sqrt{4a+3}}-p$ & $w+1$ \Tstrut\\
  $\M{0}{-}{+}{0}$ & $p \geq 0$ & $\floor{\sqrt{4a+7}}-1$ & $(p+1)w+1$ \Tstrut\Bstrut\\
  
  \hline
  
  $\M{-}{0}{0}{+}$ & $p \leq 1$ & \gray{$\infty$} & \gray{$f(w)$} \Tstrut\\
  $\M{0}{+}{-}{0}$ & $p \geq 2$ & \gray{$\infty$} & $f(w)+p-2$ \Tstrut\Bstrut\\
  
  \hline
  \hline
  
  \multirow{2}{*}{$\MM{+}{0}{-}{0}{0}{+}{0}{-}$}
  & $p \leq 0$ & $2(1-p)a+1 \leq \cdot \leq 4(1-p)a$ & $\floor{\frac{w+3}{2}}$ \Tstrut\\
  & $p > 0$ & $a+1$ & $pw+1$ \Tstrut\Bstrut\\

  \hline
  
  \multirow{2}{*}{$\MM{+}{0}{-}{0}{0}{-}{0}{+}$}
  & $p \leq 0$ & $(1-p)a+1$ & $w+1$ \Tstrut\\
  & $p > 0$ & $\floor{\frac{a+3}{2}}$ & $ 2pw+1 \leq \cdot \leq 4pw$ \Tstrut\Bstrut\\
  \hline
 \end{tabular} 
 \caption{\small Values of $\Xa(M,p,a)$ and $\Xw(M,p,w)$ for $p$, $a$, $w\in\bZ$, $a$, $w\geq 1$, and matrices $M$.
 The first row covers all non-translation invariant $M \in \bZ^{1 \times 4}$, rows 2nd to 11th cover all translation-invariant $M\in\{-1,0,1\}^{1\times 4}$, and the last two rows cover two $M \in \{-1,0,1\}^{2 \times 4}$.
 Recall that $f(x)$ is the largest integer $k$ such that $\binom{k}{2}\leq x$. The results follow from the results in Table~\ref{tab:detailed} using the identities~\eqref{eq:X-from-A} and~\eqref{eq:X-from-W}.}
 \label{tab:Xdetailed}
\end{table}

\paragraph{Organization of the Paper.}

In Section~\ref{sec:other-parameters} we show how several graph parameters can be phrased in terms of $M_p(G)$ for appropriate $M$ and $p$.
In Section~\ref{sec:proof-general-thm} we prove Theorem~\ref{thm:generalMatrices}.
In Section~\ref{sec:proofs} we prove Theorem~\ref{thm:specialMatrices}.
Here we determine $A(M,p,k)$ and $W(M,p,k)$ exactly for all $p$, $k \in \bZ$, $k \geq 2$, and all translation invariant matrices $M \in \{-1,0,1\}^{1 \times 4}$ in Propositions~\ref{P0M0}~--~\ref{M00P}.
Prior to that, we provide some lemmas in Section~\ref{sec:prelim} which enable us to restrict our attention to only eight such translation invariant $1 \times 4$--matrices. 
In Section~\ref{sec:nest} we prove Theorem~\ref{thm:nest}.
Finally we give conclusions and further questions in Section~\ref{sec:conclusions}.

\paragraph{Notation.}
For a positive integer $n$ we write $[n]=\{1,\ldots,n\}$.

\section{Relation to other graph parameters}\label{sec:other-parameters}

Further examples of graph parameters that can be phrased in terms of $M_p(G)$ include the page-number $p(F)$~\cite{Oll73}, queue-number $q(F)$~\cite{HR92}, degeneracy $d(F)$~\cite{LW70}, and band-width $b(F)$~\cite{Har64,Kor66} of a graph $F$.
The \emph{page-number} (respectively \emph{queue-number}) of $F$ is the minimum $k$ for which there exists a vertex-ordering and a partition of the edges into $k$ sets $S_1,\ldots,S_k$ such that no two edges in the same $S_i$, $i=1,\ldots,k$, are crossing (respectively nesting).
Denoting by $G^\star$ the underlying unordered graph of a given ordered graph $G$, we have\footnote{For $M = \MM{+}{0}{-}{0}{0}{-}{0}{+}$ and $p\geq 1$ we have for any $G$ that $M_p(G)$ is a comparability graph with respect to the relation ``nested under'', and thus $\omega(M_p(G)) = \chi(M_p(G))$.}
\begin{align*}
 p(F) & = \min \{\chi(M_p(G)) \mid G^\star = F\}, \text{ for } M = \Mcross,\ p = 1,\\
 q(F) & = \min \{\chi(M_p(G)) \mid G^\star = F\}, \text{ for } M = \MM{+}{0}{-}{0}{0}{-}{0}{+},\ p = 1. 
\end{align*}
The \emph{degeneracy} (respectively \emph{band-width}) of $F$ is the smallest $k$ for which there exists a vertex-ordering such that every vertex has at most $k$ neighbors with a smaller index (respectively every edge has length at most $k$).
Here the \emph{length of an edge $(u,v)$} in an ordered graph is given by $v-u$.
In our framework we can write degeneracy $d$ and band-width $b$ as
\begin{align*}
 d(F) & = \min \{\omega(M_p(G)) \mid G^\star = F\}, \text{ for } M = \MM{0}{+}{0}{-}{0}{-}{0}{+},\ p = 0,\\
 b(F) & = \min \{p \geq 1 \mid \omega(M_{p+1}(G)) = 1, G^\star = F\}, \text{ for } M = \M{-}{+}{0}{0}.
\end{align*}
Moreover, if $G$ is an ordered graph, then its \emph{interval chromatic number $\chi_\prec(G)$} is the minimum number of intervals $\bZ$ can be partitioned into, so that no two vertices in the same interval are adjacent in $G$~\cite{PT05}.
As we shall prove later (c.f.\ Lemma~\ref{lem:a-almost-t-colorable}), this can be rephrased as $\chi_\prec(G) = \omega(M_p(G))+1$, for $M = \M{+}{0}{0}{-}$ and $p = 0$.

Let us also mention that Dujmovi\'{c} and Wood~\cite{DW04} define for $k \in \bZ$, $k \geq 2$, a \emph{$k$-edge necklace} in an ordered graph $G$ as a set of $k$ edges of $G$ which are pairwise in conflict with respect to $M = \M{+}{0}{0}{-}$ and $p = 1$, see Figure~\ref{fig:Example-Matrices} top right.
They further define the \emph{arch-number} of a graph $F$ as
\[
 \operatorname{an}(F) = \min \{ \omega(M_p(G)) \mid G^\star = F\} , \text{ for } M = \M{+}{0}{0}{-},\ p = 1,
\]
and prove that the largest chromatic number among all graphs $F$ with $\operatorname{an}(F)\leq w$ equals $2w+1$.
We generalize this result to any $p \in \bZ$ (c.f.\ Proposition~\ref{P00M}).

\section{Proof of Theorem~\ref{thm:generalMatrices}}\label{sec:proof-general-thm}

 Let $s$, $k$, $p\in\bZ$, with $s\geq 1$, $k\geq 2$, and $M \in \bZ^{s \times 4}$.

\medskip

 First of all assume that $M$ is not translation invariant.
 Let $G$ be any ordered graph with $\chi(G)\geq k$.
 For an integer $t$, let $G_t$ denote the ordered graph obtained from $G$ by adding $t$ to every vertex (so $G_t$ contains an edge $(x+t,y+t)$ if and only if $(x,y)$ is an edge in $G$).
 Clearly, we have $M(x + \mathbf{t}) = Mx + M\mathbf{t} = Mx + t(M\mathbf{1})$ for any vector $x \in \bZ^4$.
 Hence,  since $M\mathbf{1} \neq \mathbf{0}$, there is a large or small enough $t = t(G,M,p)$ such that for the first row $M^{(1)}$ of $M$ and any $u_1,v_1,u_2,v_2 \in V(G)$ we have $M^{(1)}(u_1,v_1,u_2,v_2)^\top < p$, i.e., the conflict graph $M_p(G_t)$ is empty and its clique number is~$1$.
 This implies that $W(M,p,k)=1$.

 If additionally $M\mathbf{1}>\mathbf{0}$ (respectively $M\mathbf{1}<\mathbf{0}$), then there is a large (respectively small) enough $t = t(G,M,p)$ such that $M_p(G_t)$ is complete, implying that $A(M,p,k)=1$.
 
\medskip

 Now assume that $M=\M{m_1~}{m_2~}{m_3~}{m_4}\in\bZ^{1\times 4}$ is translation invariant.
 Consider an ordered graph $G$ with $\chi(G)\geq k$.
 
 \begin{enumerate}[label = (\roman*)] 
  \item 
  We assume that $m_2 + m_4 \geq\max\{p,0\}$.
  Consider edges $(u_1,v_1)$ and $(u_2,v_2)$ in $G$.
 Then $v_1+v_2 \geq u_1+u_2+2$ and thus
 \begin{align*}
  &\hspace{-5em} M(u_1,v_1,u_2,v_2)^\top + M(u_2,v_2,u_1,v_1)^\top\\
  =\, &(m_1+m_3)(u_1+u_2)+(m_2+m_4)(v_1+v_2)\\
  \geq\, &(m_1+m_3)(u_1+u_2)+(m_2+m_4)(u_1+u_2+2)\\
  =\, &(m_1+m_2+m_3+m_4)(u_1+u_2)+2(m_2+m_4)\\
  =\, &2(m_2+m_4)\\
  \geq\, &2p.
 \end{align*}
 Hence we have $M(u_1,v_1,u_2,v_2)^\top \geq p$ or $M(u_2,v_2,u_1,v_1)^\top \geq p$.
 Therefore $(u_1,v_1)$ and $(u_2,v_2)$ are conflicting and $M_p(G)$ is a complete graph on $|E(G)|$ vertices.
 This clearly implies that $A(M,p,k)=1$.
 
 Secondly, since $\chi(G) \geq k$, we have $W(M,p,k)\geq |E(G)| \geq \binom{k}{2}$, since there is an edge between any two color classes in an optimal proper coloring of $G$.
 Moreover $W(M,p,k)\leq \omega(M_p(K_k)) = \binom{k}{2}$, see~\eqref{eq:general-bounds}.
 This shows that $W(M,p,k) = \binom{k}{2}$.

 \item
 We assume that $m_1+m_2 = m_3+m_4 = 0$, $m_2,m_4 \leq 0$ and $p > m_2+m_4$.
 For any edge $(u,v)$ in $G$ we have $u + 1 \leq v$ and thus $m_2(u+1) \geq m_2v$ and $m_4(u+1) \geq m_4v$.
 Hence for any two edges $(u_1,v_1)$ and $(u_2,v_2)$ in $G$ we have
 \begin{align*}
  M(u_1,v_1,u_2,v_2)^\top =\, &m_1u_1+m_2v_1+m_3u_2+m_4v_2\\
  \leq\, &u_1(m_1+m_2)+m_2+u_2(m_3+m_4)+m_4\\
  =\, &m_2+m_4 < p.
 \end{align*}
 Hence $(u_1,v_1)$ and $(u_2,v_2)$ are not conflicting and $M_p(G)$ is an empty graph on $|E(G)| \geq \binom{k}{2}$ edges.
 Since equality is attained for $G = K_k$, analogously to the first item $A(M,p,k) = \binom{k}{2}$ and $W(M,p,k) = 1$.

 \item
 We assume that $m_2+m_4 > 0$ or ($m_2+m_4 = 0$ and $m_1,m_2 \neq 0$).
 Fix some integer $q \geq \max\{2,p\}$ and let $V=\{q^i\mid i\in\bZ,i\geq 1\}$.
 If $m_2 + m_4 = 0$ and $m_1,m_2 \neq 0$, then additionally ensure that $q$ is not a factor of $m_1$. 
 We claim that if $V(G) \subset V$, then $M_p(G)$ is a complete graph.
 Indeed, consider two edges $(q^i,q^s)$, $(q^j,q^t)$, $i<s$, $j<t$ and $i\leq j$.
 If $m_2 + m_4 > 0$, then
 \begin{align*}
  &\hspace{-3em} M(q^i,q^s,q^j,q^t)^\top + M(q^j,q^t,q^i,q^s)^\top\\
  =\, &m_1q^i+m_2q^s+m_3q^j+m_4q^t+m_1q^j+m_2q^t+m_3q^i+m_4q^s\\
  =\, &(m_1+m_3)(q^i+q^j)+(m_2+m_4)(q^s+q^t)\\
  \geq\, &(m_1+m_3)(q^i+q^j)+(m_2+m_4)(q^{i+1}+q^{j+1})\\
  \geq\, &(m_1+m_3)(q^i+q^j)+(m_2+m_4)(q^i+q^j+2q)\\
  \geq\, &\underbrace{(m_1+m_2+m_3+m_4)}_{=0}(q^i+q^j)+(m_2+m_4)2q\\
  \geq\, &2q \geq 2p.
 \end{align*}
 Thus $M(q^i,q^s,q^j,q^t)^\top\geq p$ or $M(q^j,q^t,q^i,q^s)^\top\geq p$.
  
 If $m_2+m_4=0$ and $m_1,m_2 \neq 0$, then $m_1=-m_3$ and $m_2=-m_4$, as $M$ is translation invariant.
 Recall that $q$ divides neither $m_1$ nor $m_2$.
 Further recall that $i\leq j,s,t$ and $i<s,t$.
 We have
 \begin{align*}
  |M(q^i,q^s,q^j,q^t)^\top | &= |m_1q^i+m_2q^s+m_3q^j+m_4q^t|\\
  &= |m_1(q^i-q^j)+m_2(q^s-q^t)|\\
  & = q^i\, |m_1(1-q^{j-i})+m_2(q^{s-i}-q^{t-i})|\\
  & \geq q^i \geq p.  
 \end{align*}
 Note that $m_1(1-q^{j-i})+m_2(q^{s-i}-q^{t-i})\neq 0$, since $i-j=0$ implies $s\neq t$ (as edges $q^iq^s$ and $q^jq^t$ are distinct) and since $q$ is a factor of $m_2(q^{s-i}-q^{t-i})$ but not of $m_1(1-q^{j-i})$.
 Hence $M(q^i,q^s,q^j,q^t)^\top \geq p$ or $ M(q^j,q^t,q^i,q^s)^\top = m_1q^j+m_2q^t+m_3q^i+m_4q^s = -m_3q^j-m_4q^t-m_1q^i-m_2q^s = - M(q^i,q^s,q^j,q^t)^\top \geq p$.
 
 This shows that any two edges with endpoints in $V$ are in conflict.
 Therefore $M_p(G)$ is a complete graph if $V(G)\subset V$.
 This shows that $A(M,p,k) = 1$.

 \item
 We assume that $m_2,m_4 < 0$.
 Fix some integer $q \geq \max\{2,-p,(1-m_1)/m_2, (1-m_3)/m_4\}$ and let $V=\{q^i\mid i\in\bZ,i\geq 1\}$.
 We claim that if $V(G)\subset V$, then $M_p(G)$ is an empty graph.
 Indeed, for any two edges $(q^i,q^j)$ and $(q^s,q^t)$, $i < j$, $s < t$, we have
 \begin{align*}
  M(q^i,q^j,q^s,q^t)^\top =\, &m_1q^i+m_2q^j+m_3q^s+m_4q^t\\
  =\, &q^i(m_1+m_2q^{j-i}) + q^s(m_3+m_4q^{t-s})\\
  \leq\, &q^i(\underbrace{m_1+m_2q}_{\leq -1}) + q^s(\underbrace{m_3+m_4q}_{\leq -1})\\
  \leq\, &-q^i-q^s \leq -q-1 \leq p-1.
 \end{align*}
 This shows that no edges with vertices in $V$ are in conflict.
 Therefore $M_p(G)$ is an empty graph if $V(G)\subset V$.
 Similarly to above we have $W(M,p,k) = 1$\qed
 \end{enumerate}

\section{Reductions and Preliminary Lemmas}\label{sec:prelim}

This section contains some preliminary lemmas preparing the proof of Theorem~\ref{thm:specialMatrices} in Section~\ref{sec:proofs}.
Let $M \in \bZ^{s \times 4}$ and $p,k \in \bZ$ with $k \geq 2$.
We start with some basic operations on $M$ and $p$ and their effect on $A(M,p,k)$ and $W(M,p,k)$.
The first such operation follows immediately from the definition of conflicting edges.

\begin{observation}\label{obs:swapping-columns}
 Swapping the first column in $M$ with the third and the second with the fourth preserves all conflicts and non-conflicts.
 Hence if $M'$ denotes the resulting matrix, we have $A(M',p,k) = A(M,p,k)$ and $W(M',p,k) = W(M,p,k)$.
\end{observation}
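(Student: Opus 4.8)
The plan is to make explicit what the column swap does at the level of matrix--vector products. If $M$ has rows $(a,b,c,d)$, then $M'$ has the corresponding rows $(c,d,a,b)$, so for every tuple of integers $u_1,v_1,u_2,v_2$ one has, row by row,
\[
 M'(u_1,v_1,u_2,v_2)^\top = M(u_2,v_2,u_1,v_1)^\top .
\]
First I would record this identity. Consequently, replacing $M$ by $M'$ in the definition of $M_p(G)$ only interchanges the two disjuncts in the condition ``$M(u_1,v_1,u_2,v_2)^\top \geq \bp$ or $M(u_2,v_2,u_1,v_1)^\top \geq \bp$'', which of course does not change whether it holds. Hence for every ordered graph $G$ we have $M'_p(G) = M_p(G)$, not merely up to isomorphism but as literally the same graph on vertex set $E(G)$ (this is the fact already noted in the introduction).

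Given that, the two claimed equalities are immediate from Definition~\ref{def:min-alpha}: both $A(M,p,k)$ and $A(M',p,k)$ are obtained by minimizing $\alpha(M_p(G))=\alpha(M'_p(G))$ over exactly the same class of ordered graphs $G$ with $\chi(G)\geq k$, and similarly for $W$ with $\omega$ in place of $\alpha$. So the minima coincide, giving $A(M',p,k)=A(M,p,k)$ and $W(M',p,k)=W(M,p,k)$.

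There is essentially no obstacle here; the only point requiring a moment of care is to check that the swap is applied consistently to both four-coordinate ``slots'' $(u_1,v_1)$ and $(u_2,v_2)$, which is exactly what makes the displayed identity come out symmetric. The argument also transparently extends to any statistic of the conflict graph, not just $\alpha$ and $\omega$, since it shows the conflict graph itself is unchanged.
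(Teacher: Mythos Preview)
Your argument is correct and is exactly the reasoning the paper has in mind: the paper states that this observation ``follows immediately from the definition of conflicting edges'' and earlier notes explicitly that $M_p(G)=M'_p(G)$, which is precisely what you verify via the identity $M'(u_1,v_1,u_2,v_2)^\top = M(u_2,v_2,u_1,v_1)^\top$.
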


The next lemma provides another such operation, as well as an operation on $1 \times 4$--matrices that exchanges the roles of conflicts for non-conflicts.
(Later in Section~\ref{sec:nest} we prove a similar result for one specific $2 \times 4$--matrix.)
Let us remark that for some $M$ and $p$, for example for $M = \MM{-}{+}{0}{0}{-}{0}{+}{0}$ and $p \geq 3$, there is no matrix $\tilde{M}$ and integer $\tilde{p}$ such that for every ordered graph $G$ we have that $M_p(G)$ is the complement of $\tilde{M}_{\tilde{p}}(G)$.

For a matrix $M$ let $-M$ be the matrix obtained by multiplying every entry by $-1$, and $\overline{M}$ be the matrix obtained from $M$ by reversing the order of its columns.

\begin{lemma}\label{lem:small-facts}
 For every matrix $M \in \bZ^{s\times 4}$ and all integers $p,k \in \bZ$, $k \geq 2$ each of the following holds.
 \begin{enumerate}[label = (\roman*)]
  \item $A(M,p,k) = A(-\overline{M},p,k)$ and $W(M,p,k) = W(-\overline{M},p,k)$.\label{enum:reverse-matrix}
  
  \item If $M = \M{m_1~}{m_2~}{m_3~}{m_4}$, then
   \[
    A(M,p,k) = W(\MM{-m_1}{-m_2}{-m_3}{-m_4}{-m_3}{-m_4}{-m_1}{-m_2},1-p,k)
   \]
   and\label{enum:exchange}
   \[
    W(M,p,k) = A(\MM{-m_1}{-m_2}{-m_3}{-m_4}{-m_3}{-m_4}{-m_1}{-m_2},1-p,k).
   \]
 \end{enumerate}
\end{lemma}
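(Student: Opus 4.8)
The plan is to prove both items by establishing, in each case, a conflict-preserving (respectively conflict-complementing) correspondence at the level of individual ordered graphs, and then lift it to the extremal quantities $A$ and $W$. For part~\ref{enum:reverse-matrix}, the key observation is that reversing the order of the columns of $M$ corresponds to relabelling the edge-endpoints by the reflection $x \mapsto -x$, since for an edge $(u,v)$ with $u<v$ we have $-v<-u$, so the tuple $(u,v,u',v')$ attached to a pair of edges becomes $(-v',-u',-v,-u)$ after reflection; writing this out, $\overline{M}(-v,-u,-v',-u')^\top = -M(u,v,u',v')^\top$, so a pair of edges is conflicting with respect to $(M,p)$ iff after reflection it is conflicting with respect to $(-\overline{M},p)$ — one must be slightly careful that the ``or'' in the definition of the conflict graph (checking both $M(u_1,v_1,u_2,v_2)^\top\geq\bp$ and $M(u_2,v_2,u_1,v_1)^\top\geq\bp$) makes the correspondence symmetric in the two edges, so no sign or ordering subtlety survives. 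First I would set up the reflection map $\phi(x)=-x$, note it is injective $\bZ\to\bZ$, observe $\chi(G)=\chi(G')$ where $G'$ is obtained from $G$ by $\phi$, and then verify $M_p(G)$ and $(-\overline{M})_p(G')$ are the same graph on the same edge set; taking minima over all $G$ with $\chi(G)\geq k$ then gives both equalities in~\ref{enum:reverse-matrix}.

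For part~\ref{enum:exchange}, the target matrix $\tilde M = \MM{-m_1}{-m_2}{-m_3}{-m_4}{-m_3}{-m_4}{-m_1}{-m_2}$ is $-M$ stacked on top of $-M'$, where $M'$ is $M$ with columns $1\leftrightarrow 3$, $2\leftrightarrow 4$ swapped. The point is that a pair of edges $e_1,e_2$ in an ordered graph $G$ is conflicting with respect to $(\tilde M, \tilde p)$ iff $\tilde M(u_1,v_1,u_2,v_2)^\top \geq \tilde{\mathbf p}$ or $\tilde M(u_2,v_2,u_1,v_1)^\top \geq \tilde{\mathbf p}$; since $\tilde M$ has both rows of $-M'$ available up to the column swap, and the column swap is exactly the symmetry from Observation~\ref{obs:swapping-columns}, one checks that this happens iff $-M(u_1,v_1,u_2,v_2)^\top \geq 1-p$ or $-M(u_2,v_2,u_1,v_1)^\top \geq 1-p$, i.e.\ iff $M(u_1,v_1,u_2,v_2)^\top \leq p-1 < p$ and $M(u_2,v_2,u_1,v_1)^\top \leq p-1 < p$ fails — more precisely, $\tilde M_{1-p}(G)$ is the \emph{complement} of $M_p(G)$ on the vertex set $E(G)$. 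Here the arithmetic is just the equivalence $-a \geq 1-p \iff a \leq p-1 \iff a < p$ for integers $a$, applied componentwise. Once $\tilde M_{1-p}(G) = \overline{M_p(G)}$ is established, the two identities follow because $\alpha(H)=\omega(\overline H)$ and $\omega(H)=\alpha(\overline H)$ for any graph $H$: taking the minimum of $\omega(\tilde M_{1-p}(G))=\alpha(M_p(G))$ over ordered $G$ with $\chi(G)\geq k$ yields $W(\tilde M,1-p,k)=A(M,p,k)$, and symmetrically for the other identity.

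The main obstacle — really the only place care is needed — is bookkeeping the two-sided nature of the conflict relation in the presence of the extra matrix row. In part~\ref{enum:exchange} the matrix $\tilde M$ genuinely has \emph{two} rows even though $M$ has one, and one must confirm that the disjunction over the two rows of $\tilde M$ together with the disjunction over the two orderings of the edge pair reproduces exactly the negation of a single scalar inequality and its ordering-swapped version; this is where the column-swap invariance (Observation~\ref{obs:swapping-columns}) and the integrality of all entries (to pass between $\leq p-1$ and $<p$) are both used, and it is worth writing the four cases out explicitly rather than hand-waving. The remark preceding the lemma — that no single-row complement matrix exists in general — is exactly the reason the construction must use a $2\times 4$ matrix, and noting this makes the shape of $\tilde M$ less mysterious. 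Everything else (injectivity of the maps, invariance of $\chi$ under relabelling, and $\alpha/\omega$ duality under complementation) is routine.
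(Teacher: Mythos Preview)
Your approach matches the paper's in both parts: for~\ref{enum:reverse-matrix} the reflection $x\mapsto -x$ giving an isomorphism $M_p(G)\cong(-\overline{M})_p(-G)$, and for~\ref{enum:exchange} the verification that $\tilde M_{1-p}(G)$ is the complement of $M_p(G)$, followed by $\alpha/\omega$ duality.

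There is one slip in your write-up of~\ref{enum:exchange}, precisely at the spot you yourself flagged as delicate. You assert that $e_1e_2\in E(\tilde M_{1-p}(G))$ holds iff
\[
 -M(u_1,v_1,u_2,v_2)\geq 1-p \quad\text{\emph{or}}\quad -M(u_2,v_2,u_1,v_1)\geq 1-p,
\]
but the connective should be \emph{and}. The vector inequality $\tilde M(u_1,v_1,u_2,v_2)^\top\geq\mathbf{1-p}$ is a conjunction of its two rows, namely $-M(u_1,v_1,u_2,v_2)\geq 1-p$ \emph{and} $-M(u_2,v_2,u_1,v_1)\geq 1-p$; and, as you note, the column-swap symmetry makes the second disjunct $\tilde M(u_2,v_2,u_1,v_1)^\top\geq\mathbf{1-p}$ identical to the first, so the outer ``or'' collapses. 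With ``and'' in place, negating gives exactly $M(u_1,v_1,u_2,v_2)\geq p$ \emph{or} $M(u_2,v_2,u_1,v_1)\geq p$, i.e.\ $e_1e_2\in E(M_p(G))$, and your conclusion that $\tilde M_{1-p}(G)=\overline{M_p(G)}$ follows. As written, your ``or'' line followed by ``the conjunction fails'' would instead give $\tilde M_{1-p}(G)=M_p(G)$, contradicting your next sentence. This is a bookkeeping slip, not a structural one; once corrected the argument is the paper's.
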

\begin{proof}
 \begin{enumerate}[label = (\roman*)]
  \item Consider for any ordered graph $G$ the ordered graph $-G$ obtained by multiplying every vertex position by $-1$ (so $(-x,-y)$ is an edge in $-G$ if and only if $(y,x)$ is an edge in $G$).
  For an edge $e$ in $G$ let $e^-$ be the corresponding edge in $-G$.
   Intuitively speaking, $-G$ is obtained from $G$ by exchanging the meanings of left and right.
   Now for any two edges $e_1,e_2$ in $G$, say $e_1 = (u_1,v_1)$ and $e_2 = (u_2,v_2)$, we have
   \begin{align*}
    & \hspace{-3em} e_1e_2 \in E(M_p(G)) \\
    \Leftrightarrow  \hspace{2em} &M(u_1,v_1,u_2,v_2)^\top \geq \mathbf{p} \lor M(u_2,v_2,u_1,v_1)^\top \geq \mathbf{p}\\
    \Leftrightarrow  \hspace{2em} &(-M)(-u_1,-v_1,-u_2,-v_2)^\top \geq \mathbf{p}\\
     \lor \hspace{0.5em} &(-M)(-u_2,-v_2,-u_1,-v_1)^\top \geq \mathbf{p}\\
    \Leftrightarrow  \hspace{2em} &(-\overline{M})(-v_2,-u_2,-v_1,-u_1)^\top \geq \mathbf{p}\\
     \lor \hspace{0.5em} &(-\overline{M})(-v_1,-u_1,-v_2,-u_2)^\top \geq \mathbf{p}\\
    \Leftrightarrow  \hspace{2em} &e^-_1e^-_2\in E(-\overline{M}_p(-G)).
   \end{align*}
   Thus mapping an edge $e$ from $G$ to $e^-$ yields an isomorphism between $M_p(G)$ and $-\overline{M}_p(-G)$.
   Since $\chi(G)=\chi(-G)$ we get $A(M,p,k) = A(-\overline{M},p,k)$ and $W(M,p,k) = W(-\overline{M},p,k)$.
  
   \item Let $e_1,e_2$ be any two edges in a given ordered graph $G$.
   Say $e_1 = (u_1,v_1)$ and $e_2 = (u_2,v_2)$.
   Then for $M' = \MM{-m_1}{-m_2}{-m_3}{-m_4}{-m_3}{-m_4}{-m_1}{-m_2}$ we have 
   \begingroup\allowdisplaybreaks
   \begin{align*}
    & \hspace{-3em} e_1e_2 \in E(M_p(G)) \\
    \Leftrightarrow \hspace{2em} &m_1u_1 + m_2v_1 + m_3u_2 + m_4v_2 \geq p\\
    \lor \hspace{0.5em} &m_1u_2 + m_2v_2 + m_3u_1 + m_4v_1 \geq p\\
    \Leftrightarrow \hspace{0.5em} \lnot \hspace{0.5em} &\big(m_1u_1 + m_2v_1 + m_3u_2 + m_4v_2 \leq p-1\\
     &\land m_1u_2 + m_2v_2 + m_3u_1 + m_4v_1 \leq p-1 \big)\\
    \Leftrightarrow\hspace{0.5em} \lnot \hspace{0.5em} & \big(-m_1u_1 - m_2v_1 - m_3u_2 - m_4v_2 \geq 1-p\\
    & \land -m_1u_2 - m_2v_2 - m_3u_1 - m_4v_1 \geq 1-p \big)\\
    \Leftrightarrow\hspace{0.5em} \lnot \hspace{0.5em} &\MM{-m_1}{-m_2}{-m_3}{-m_4}{-m_3}{-m_4}{-m_1}{-m_2}(u_1,v_1,u_2,v_2)^\top \geq \mathbf{1-p}\\
    \Leftrightarrow\hspace{0.5em} \lnot \hspace{0.5em} &\Big(\MM{-m_1}{-m_2}{-m_3}{-m_4}{-m_3}{-m_4}{-m_1}{-m_2}(u_1,v_1,u_2,v_2)^\top \geq \mathbf{1-p}\\
    & \lor \MM{-m_1}{-m_2}{-m_3}{-m_4}{-m_3}{-m_4}{-m_1}{-m_2}(u_2,v_2,u_1,v_1)^\top \geq \mathbf{1-p}\Big)\\
    \Leftrightarrow\hspace{2em} &e_1e_2 \not\in E(M'_{1-p}(G)).
   \end{align*}%
   \endgroup
   Therefore $M_p(G)$ is the complement of $M'_{1-p}(G)$.
   From this we get $A(M,p,k) = W(M',1-p,k)$ and $W(M,p,k) = A(M',1-p,k)$.
 \end{enumerate}
 \vspace{-\baselineskip}
\end{proof}

We close this section with a lemma, which is needed in some of the proofs in Section~\ref{sec:proofs}.

\begin{lemma}\label{lem:LongEdges}
  Let $k$, $q$ denote positive integers with $k > q$, and let $G$ be any ordered graph.
  If $\chi(G)\geq k$, then there is a set $S$ of $\binom{k-q+1}{2}$ edges of length at least $q$ in $G$.
  Moreover, if $k\geq 3$, then there is an edge of length at least $q-1$ in $G$ that is not in $S$.
\end{lemma}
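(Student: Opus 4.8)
The plan is to use a standard vertex-ordering argument: take an ordered graph $G$ with $\chi(G) \geq k$, repeatedly delete vertices that are incident to too few ``long'' edges, and argue that the remaining graph is still highly chromatic, which forces many long edges to survive. Concretely, call an edge \emph{long} if it has length at least $q$. First I would show that if $G$ has at most $\binom{k-q+1}{2} - 1$ long edges, one can properly color $G$ with fewer than $k$ colors, contradicting $\chi(G) \geq k$; this gives the first part of the statement. The coloring is built as follows. Consider the subgraph $G'$ of $G$ consisting only of the long edges. The key observation is that every vertex $v$ has at most $q-1$ neighbors $u$ with $u < v$ and $(u,v)$ \emph{short} (there are only $q-1$ integers in the open interval $(v-q, v)$), and likewise at most $q-1$ short neighbors to the right. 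So in the ordering of $V(G)$, if we also had a bound on the number of long neighbors, we could bound degeneracy; instead we bound the \emph{number} of long edges directly.

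The cleaner route: let $m$ be the number of long edges and suppose $m \leq \binom{k-q+1}{2} - 1$, equivalently $m < \binom{k-q+1}{2}$, so $f(m) \leq k-q$ where $f$ is as in the paper (the largest $j$ with $\binom{j}{2} \leq m$), hence the subgraph $G'$ of long edges satisfies $\chi(G') \leq$ (number of vertices of $G'$ that are non-isolated)$\,\leq$ something — but more simply, a graph with $m$ edges has chromatic number at most $\tfrac12 + \sqrt{2m + \tfrac14}$, so $\chi(G') \leq k-q$ when $m \leq \binom{k-q+1}{2}-1$ (since $\binom{k-q+1}{2}-1 < \binom{k-q+1}{2}$ and $\chi(K_{k-q+1}) = k-q+1$ needs $\binom{k-q+1}{2}$ edges; any graph with strictly fewer edges than $K_{j}$ is $(j-1)$-colorable). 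Now I color $G$ by combining a proper $(k-q)$-coloring of $G'$ with a proper $q$-coloring of the graph $H$ of short edges: $H$ has the property that in the left-to-right vertex order each vertex has at most $q-1$ earlier neighbors, so $H$ is $(q-1)$-degenerate and hence $q$-colorable. Taking the product coloring (pair of colors) yields a proper coloring of $G = G' \cup H$ with $(k-q) \cdot q$ colors — too many. So instead I refine: color $V(G)$ greedily in left-to-right order, where the color of $v$ must avoid the colors of its short earlier neighbors (at most $q-1$ of them) and the $G'$-color-class structure is handled separately. Actually the slick argument is: a proper $(k-q)$-coloring $c'$ of $G'$ partitions $V$ into $k-q$ classes; within each class the induced subgraph of $G$ uses only short edges, hence is $(q-1)$-degenerate, hence $q$-colorable; but we want an overall $(k-1)$-coloring, so we instead observe $\chi(G) \leq \chi(G') + \chi(H) - 1 \leq (k-q) + q - 1 = k-1$ if $G'$ and $H$ can be colored ``compatibly''. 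The right tool here is that $\chi(G) \le \chi(G') + \chi(H)$ is false in general but $\chi(G) \le \chi(G')\cdot\chi(H)$ holds; so I would instead directly bound $\chi(G)$ using that $G$ decomposes as $G' \cup H$ with $H$ being $(q-1)$-degenerate: order $V$ so that every vertex has $\leq q-1$ short back-neighbors; then a proper $(k-q)$-coloring $c'$ of $G'$ together with greedy coloring handles it — the main obstacle is exactly making this combination give $k-1$ and not $(k-q)q$ colors.

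The resolution I would adopt: use the bound $\chi(G) \le \chi(G') + (q-1)$. This holds because given an optimal proper coloring of $G'$ with color classes $V_1, \dots, V_{\chi(G')}$, one can process the classes in order and recolor within each class using the greedy algorithm in left-to-right vertex order on the short-edge graph: when we reach vertex $v$ in class $V_i$, its forbidden colors are those of its short neighbors that are to its left (at most $q-1$) — but its short neighbors in earlier classes already have colors, while $v$ keeps needing to avoid them too. This does not obviously close. The honest fix, and what I expect the paper does, is: take the long-edge graph $G'$; since it has fewer than $\binom{k-q+1}{2}$ edges it is $(k-q)$-colorable; fix such a coloring; now in $G$, contract or rather consider that each pair (color of $G'$, position) — no. I will instead argue the contrapositive more carefully by induction on $|V(G)|$: if every vertex of $G$ is incident to at most $q-1$ long edges then $G$ is... not quite either.

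\textbf{The main obstacle} is precisely this color-counting: reconciling the $(k-q)$-coloring coming from few long edges with the $(q-1)$-degeneracy of short edges to land at exactly $k-1$. I expect the correct argument is: suppose for contradiction $G$ has at most $\binom{k-q+1}{2}-1$ long edges; let $G'$ be the long-edge subgraph, so $G'$ has a proper coloring with colors $\{1, \dots, k-q\}$ — wait, we can do better: order the vertices by position; a vertex $v$ is a \emph{right endpoint of a long edge} only if it has a long back-neighbor. Delete iteratively any vertex with no long back-neighbor and $\le q-1$ short back-neighbors after... The clean statement: $G$ is $(k-1)$-degenerate in the left-to-right order because each vertex $v$ has at most $(q-1)$ short back-neighbors plus some long back-neighbors, and if the total long-edge count is $< \binom{k-q+1}{2}$ then one shows by an averaging/peeling argument that the ``long-edge'' subgraph has a vertex of long-degree $\leq k-q-1$, peel it, recurse — this gives that $G'$ is $(k-q-1)$-degenerate in \emph{some} order (not necessarily left-to-right), so $\chi(G') \le k-q$; then finally combine with the left-to-right degeneracy for short edges via the observation that the peeling order for $G'$ can be interleaved, yielding $\chi(G) \le (k-q) + (q-1) = k-1$, contradiction. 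For the ``moreover'' part ($k \geq 3$): after extracting the $\binom{k-q+1}{2}$ long edges in $S$, repeat the first-part argument with $k$ replaced by $k-1$ and $q$ by $q-1$ to find $\binom{(k-1)-(q-1)+1}{2} = \binom{k-q+1}{2}$ edges of length $\ge q-1$; since $\binom{k-q+1}{2} > |S|$... no, they're equal, so instead note $\chi(G) \geq k \geq 3$ guarantees at least $\binom{k}{2} - \binom{k-q+1}{2} \geq 1$ further edges exist total, and a short pigeonhole shows at least one of these, or one edge of $S$'s ``shadow'', has length $\geq q-1$ and lies outside $S$ — I would make this precise by applying the first part to get a set $S'$ of $\binom{k-q+2}{2}$ edges of length $\geq q-1$ and noting $\binom{k-q+2}{2} > \binom{k-q+1}{2} = |S|$, so $S' \setminus S \neq \emptyset$, using $k \geq 3$ to ensure $q - 1 \geq 1$ makes ``length $\geq q-1$'' non-vacuous and $k > q$ still permits applying the first part with parameter $q-1 < k$.
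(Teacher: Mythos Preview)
Your central approach does not close, and you correctly identify the obstacle yourself. Decomposing $G$ into the long-edge graph $G'$ and the short-edge graph $H$ and bounding $\chi(G')\le k-q$ and $\chi(H)\le q$ only yields $\chi(G)\le \chi(G')\cdot\chi(H)\le (k-q)q$, which is far from $k-1$. The additive bound $\chi(G)\le \chi(G')+(q-1)$ you would need is false in general for an edge-union $G=G'\cup H$ with $H$ $(q-1)$-degenerate (e.g.\ $K_4$ splits into a $4$-cycle and a matching), and the ordered structure of $H$ does not rescue it: you never produce a single vertex order in which \emph{every} vertex has at most $k-2$ earlier neighbours in $G$. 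The various ``interleave the peeling orders'' ideas you sketch are exactly the missing step, and none of them is carried out.

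The paper's argument bypasses the whole coloring detour. It passes to a $k$-critical subgraph $G'$, which has minimum degree at least $k-1$, lists the vertices $v_1<\cdots<v_t$, and observes that for each $i\le k-q$ the vertex $v_i$ has at most $i-1$ neighbours to its left and at most $q-1$ short neighbours to its right, hence at least $(k-1)-(i-1)-(q-1)=k-q-i+1$ long edges going right. Summing over $i=1,\ldots,k-q$ gives $\binom{k-q+1}{2}$ long edges directly. The ``moreover'' then comes from looking at $v_1$: it has degree $\ge k-1$, of which only $k-q$ long right-edges were placed in $S$, so among its remaining $\ge q-1$ edges there is either another long one or, if all $q-1$ short slots are filled, an edge of length exactly $q-1$.

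Your plan for the ``moreover'' part---reapply the first part with $q$ replaced by $q-1$ to obtain $\binom{k-q+2}{2}>|S|$ edges of length $\ge q-1$---is sound for $q\ge 2$ once the first part is established, and is a clean alternative to the paper's endpoint-by-endpoint check.
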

\begin{proof}
  Let $G'$ denote a $k$-critical subgraph of $G$ with vertices $v_1<\cdots<v_t$, for some $t\geq k$.
  Then $G'$ has minimum degree $k-1$.
  We have that $v_i$ is a right endpoint of at most $i-1$ edges in $G'$ and a left endpoint of at most $q-1$ edges of length at most $q-1$ in $G'$, $i=1,\ldots,k-q$.
  Note that $k-q\geq 1$.
  Hence $v_i$ is left endpoint of at least $k-1-(i-1)-(q-1)=k-i-q+1$ edges of length at least $q$.
  Thus there is a set $S$ with $\sum_{i=1}^{k-q}k-q-i+1=\binom{k-q+1}{2}$ edges of length at least $q$ in $G'$.
  Moreover, if $k\geq 3$, then $v_1$ is either incident to another edge of length at least $q$ (which is not in $S$) or to an edge of length $q-1$.
\end{proof}

\section{Proof of Theorem~\ref{thm:specialMatrices}}\label{sec:proofs}

We prove Theorem~\ref{thm:specialMatrices} by considering all $19$ translation invariant matrices $M \in \{-1,0,1\}^{1 \times 4}$.
Observation~\ref{obs:swapping-columns} and Lemma~\ref{lem:small-facts}~\ref{enum:reverse-matrix} allow us to group these into ten groups of equivalent matrices, corresponding to rows $2$ to $11$ in Table~\ref{tab:detailed}, and consider only one representative matrix per group (marked with $^\star$ in the table).
The first case, $M = \M{0}{0}{0}{0}$ corresponding to row~$2$ in Table~\ref{tab:detailed}, can be completely handled with Theorem~\ref{thm:generalMatrices}.
If $p \leq 0$, then by Theorem~\ref{thm:generalMatrices}~\ref{enum:m2m40} for all $k \geq 2$ we have $A(M,p,k) = \alpha(M_p(K_k)) = 1$ for some ordered graph $K_k$ and $W(M,p,k) = \omega(M_p(K_k)) = \binom{k}{2}$ for some ordered graph $K_k$.
And if $p > 0$, then by Theorem~\ref{thm:generalMatrices}~\ref{enum:m1m20} for all $k \geq 2$ we have $A(M,p,k) = \alpha(M_p(K_k)) = \binom{k}{2}$ for some ordered graph $K_k$ and $W(M,p,k) = \omega(M_p(K_k)) = 1$ for some ordered graph $K_k$.

The remaining $18$ translation invariant matrices in $\{-1,0,1\}^{1 \times 4}$ come in nine groups corresponding to rows $3$ to $11$ in Table~\ref{tab:detailed} and are handled in Propositions~\ref{P0M0} -- \ref{M00P} below.
Let us emphasize that in all cases our upper bounds on $A(M,p,k)$ and $W(M,p,k)$ are attained by some ordered graph $K_k$.


\begin{proposition}[left endpoints at distance at least $p$, row $3$ in Table~\ref{tab:detailed}]\label{P0M0}
 ~\\
 Let $M \in \{\M{+}{0}{-}{0}, \M{-}{0}{+}{0}, \M{0}{+}{0}{-}, \M{0}{-}{0}{+}\}$.
 
 \begin{compactitem}[\enskip]
  \item If $p \leq 0$, then $A(M,p,k) = 1$ and $W(M,p,k) = \binom{k}{2}$ for all $k \geq 2$.
  
  \item If $p \geq 1$, then $A(M,p,k) = k-1$ and $W(M,p,k) = \ceil{\frac{k-1}{p}}$ for all $k \geq 2$.
 \end{compactitem} 
\end{proposition}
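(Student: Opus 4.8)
The plan is to reduce to a single representative matrix and then treat the functions $A$ and $W$ in turn. By Observation~\ref{obs:swapping-columns} and Lemma~\ref{lem:small-facts}~\ref{enum:reverse-matrix} the four matrices are equivalent for computing $A$ and $W$: swapping the first column with the third and the second with the fourth sends $\M{+}{0}{-}{0}$ to $\M{-}{0}{+}{0}$ and $\M{0}{+}{0}{-}$ to $\M{0}{-}{0}{+}$, while the operation $M\mapsto-\overline{M}$ sends $\M{+}{0}{-}{0}$ to $\M{0}{+}{0}{-}$. So it suffices to treat $M=\M{+}{0}{-}{0}$. For this $M$ and edges $(u_1,v_1)$, $(u_2,v_2)$ the conflict condition ``$u_1-u_2\geq p$ or $u_2-u_1\geq p$'' reads $|u_1-u_2|\geq p$, i.e., two edges conflict exactly when their left endpoints are at distance at least $p$. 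Hence a clique in $M_p(G)$ is a set of edges whose left endpoints are pairwise at distance $\geq p$, and an independent set is a set of edges all of whose left endpoints lie in a common block of $p$ consecutive integers. If $p\leq 0$ every two edges conflict, and the stated values $A(M,p,k)=1$, $W(M,p,k)=\binom{k}{2}$ follow from Theorem~\ref{thm:generalMatrices}~\ref{enum:m2m40} (here $m_2+m_4=0=\max\{p,0\}$); so from now on assume $p\geq 1$.

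For the function $A$: for the lower bound $A(M,p,k)\geq k-1$, given any ordered graph $G$ with $\chi(G)\geq k$, I would pass to a $k$-critical subgraph $G'$, which has minimum degree at least $k-1$; its leftmost vertex is then the left endpoint of at least $k-1$ edges of $G'$, and these edges, sharing a left endpoint, are pairwise non-conflicting, so they form an independent set of size $\geq k-1$ in the induced subgraph $M_p(G')$ of $M_p(G)$. For the matching upper bound, I would place $K_k$ on $\{1,p+1,2p+1,\ldots,(k-1)p+1\}$; then any $p$ consecutive integers contain at most one vertex, the $j$-th vertex is the left endpoint of exactly $k-j$ edges, and hence the largest independent set of $M_p(K_k)$ has size $\max_{1\leq j\leq k}(k-j)=k-1$.

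For the function $W$: the key step is to show that every ordered graph $G$ is $wp$-degenerate, where $w=\omega(M_p(G))$. For any subgraph $H$ of $G$, the set $L$ of left endpoints of edges of $H$ contains no $w+1$ integers that are pairwise at distance $\geq p$, since otherwise choosing one edge of $H$ at each of them would give a clique of size $w+1$ in the induced subgraph $M_p(H)$ of $M_p(G)$; a greedy left-to-right sweep then covers $L$ by at most $w$ blocks of $p$ consecutive integers, hence by at most $wp$ vertices, so the rightmost vertex of $H$ (which is the right endpoint of each of its incident edges) has degree at most $wp$. Thus $\chi(G)\leq wp+1$, which gives $\omega(M_p(G))\geq(\chi(G)-1)/p$ and, since $\omega$ takes positive integer values, $W(M,p,k)\geq\ceil{(k-1)/p}$. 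For the matching upper bound, I would place $K_k$ on the consecutive integers $\{1,\ldots,k\}$, whose set of edge-left-endpoints is $\{1,\ldots,k-1\}$; the largest subset of $\{1,\ldots,k-1\}$ with pairwise gaps $\geq p$ has $\ceil{(k-1)/p}$ elements, so $\omega(M_p(K_k))=\ceil{(k-1)/p}$. Combining each lower bound with the corresponding $K_k$ then yields $A(M,p,k)=k-1$ and $W(M,p,k)=\ceil{(k-1)/p}$, the extremal value in each case being attained by an ordered graph $K_k$, as required.

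I expect the degeneracy argument for the lower bound on $W$ to be the only real obstacle: one has to convert the combinatorial hypothesis ``$M_p(G)$ has no $(w+1)$-clique'' into the geometric fact that the left endpoints of every subgraph are confined to $w$ short blocks, and then extract a low-degree vertex using the rightmost vertex. The other three bounds are just the one-line observation about the leftmost vertex of a critical graph, or an explicit choice of vertex positions for $K_k$.
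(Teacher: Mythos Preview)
Your proof is correct, and for the reduction to $M=\M{+}{0}{-}{0}$, the case $p\leq 0$, and the whole treatment of $A(M,p,k)$, it matches the paper's argument essentially line for line.

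The one genuine difference is in the lower bound $W(M,p,k)\geq\ceil{(k-1)/p}$ for $p\geq 1$. The paper argues more directly: in a $k$-critical subgraph $G'$ the rightmost vertex has at least $k-1$ incident edges, all with distinct left endpoints to its left; taking every $p$-th such edge (ordered by left endpoint) yields a set of $\ceil{(k-1)/p}$ edges whose left endpoints are pairwise at distance $\geq p$, hence a clique of that size in $M_p(G')$. Your route via degeneracy is sound---the greedy sweep correctly shows that the left endpoints of any subgraph lie in at most $w$ blocks of $p$ consecutive integers, so the rightmost vertex has degree $\leq wp$---and it buys you the additional structural fact that every such $G$ is $wp$-degenerate, not merely $(wp+1)$-colorable. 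The paper's argument is shorter because it exhibits the required clique explicitly rather than going through a coloring bound, so the step you flagged as ``the only real obstacle'' is in fact unnecessary here.
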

\begin{proof}

 Consider $M=\M{+}{0}{-}{0}$.
 If $p\leq 0$, then $A(M,p,k) = 1$ and $W(M,p,k) = \binom{k}{2}$ for all $k\geq 2$ due to Theorem~\ref{thm:generalMatrices}~\ref{enum:m2m40}.
 This leaves to consider $A(M,p,k)$ and $W(M,p,k)$ in the case $p\geq 1$.

 Here two edges $e_1, e_2$ are conflicting if and only if their left endpoints differ by at least $p$. 
 A clique in $M_p(G)$ is a set of edges in $G$ whose left endpoints are pairwise at distance at least $p$.
 An independent set in $M_p(G)$ is a set of edges in $G$ whose left endpoints are pairwise at distance at most $p-1$, i.e., all left endpoints are contained in some closed interval of length at most $p-1$.

 \medskip
 
 Consider $A(M,p,k)$ for $p \geq 1$.
 For any $k \geq 2$ consider $G = K_k$ with vertex set $V = \{ip \mid i \in [k]\}$.
 As any two vertices in $V$ have distance at least $p$, two edges are non-conflicting if and only if their left endpoints coincide.
 Thus we have $A(M,p,k) \leq \alpha(M_p(G)) = k-1$, as certified by the edges incident to the leftmost vertex.
 
 \medskip
 
 Now consider any ordered graph $G$ with $\chi(G) = k$.
 Let $G'$ denote a $k$-critical subgraph of $G$, i.e., $G'$ has minimum degree at least $k-1$.
 Then $M_p(G')$ is an induced subgraph of $M_p(G)$ and hence $\alpha(M_p(G)) \geq \alpha(M_p(G'))$.
 For a vertex $v$ in $G'$ the set of all edges with left endpoint $v$ forms an independent set in $M_p(G')$.
 In particular the leftmost vertex in $G'$ is left endpoint of at least $k-1$ edges.
 Hence $\alpha(M_p(G)) \geq \alpha(M_p(G'))\geq k-1$.
 As $G$ was arbitrary, this shows that $A(M,p,k) \geq k-1$.
 
 \medskip
 
 Consider $W(M,p,k)$ for $p \geq 1$.
 For any $k \geq 2$ consider $G=K_k$ with vertex set $[k]$.
 Recall that two edges are conflicting if their left endpoints differ by at least $p$.
 Clearly, a largest clique in $M_p(G)$ is formed by considering every $p^\text{th}$ vertex of $G$ and taking one edge with this as its left endpoint.
 It follows that $W(M,p,k) \leq \omega(M_p(G)) = \ceil{(k-1)/p}$.
 
 \medskip
 
 Now consider any ordered graph $G$ with $\chi(G) = k$.
 Let $G'$ denote a $k$-critical subgraph of $G$, i.e., $G'$ has minimum degree at least $k-1$.
 Then $M_p(G')$ is a subgraph of $M_p(G)$ and hence $\omega(M_p(G)) \geq \omega(M_p(G'))$.
 Consider the set $F$ that consists of every $p^\text{th}$ edge incident to the rightmost vertex in $G'$.
 Then $F$ forms a clique in $M_p(G')$ and hence $\omega(M_p(G'))\geq \ceil{(k-1)/p}$.
 It follows that $W(M,p,k) \geq \omega(M_p(G)) \geq \ceil{(k-1)/p}$, since $G$ was arbitrary.
 
 \medskip
 
 Finally, if $M' \in \{\M{-}{0}{+}{0}, \M{0}{+}{0}{-}, \M{0}{-}{0}{+}\}$, then $M'$ is obtained from $M=\M{+}{0}{-}{0}$ either by switching the first with the third and the second with the last column or by reversing the order of columns in $-M$ or both.
 Thus $W(M',p,k)=W(M,p,k)$ and $A(M',p,k)=A(M,p,k)$, due to Observation~\ref{obs:swapping-columns} and Lemma~\ref{lem:small-facts} \ref{enum:reverse-matrix}.
%
%
%
%
\end{proof}

%
%
%
%
%
%
%


\begin{proposition}[at least one edge of length at least $p$, row $4$ in Table~\ref{tab:detailed}]\label{MP00}
 ~\\
 Let $M \in\{\M{-}{+}{0}{0},\M{0}{0}{-}{+}\}$.
 \begin{compactitem}[\enskip]
  \item If $p \leq 1$, then $A(M,p,k) = 1$ and $W(M,p,k) = \binom{k}{2}$ for all $k \geq 2$.
 
  \item If $p \geq 2$, then $A(M,p,k) = 1$ for all $k \geq 2$ and $W(M,p,k)=1$ for $2\leq k\leq p$ and $W(M,p,k) = 1+\binom{k-p+1}{2}$ for $k \geq p+1$.
 \end{compactitem} 
\end{proposition}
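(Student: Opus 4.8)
The plan is first to reduce to a single matrix: by Observation~\ref{obs:swapping-columns}, $\M{0}{0}{-}{+}$ arises from $\M{-}{+}{0}{0}$ by swapping the first two columns with the last two, so it suffices to handle $M = \M{-}{+}{0}{0}$. For this $M$ one computes $M(u_1,v_1,u_2,v_2)^\top = v_1-u_1$ and $M(u_2,v_2,u_1,v_1)^\top = v_2-u_2$; hence two edges $e_1,e_2$ of an ordered graph $G$ conflict with respect to $M$ and $p$ precisely when at least one of them has length at least $p$. Call an edge \emph{long} if its length is $\geq p$ and \emph{short} otherwise, and let $L$, $S$ be the sets of long and short edges of $G$. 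Then $M_p(G)$ is the join of a complete graph on $L$ with an edgeless graph on $S$, so $\alpha(M_p(G)) = \max\{|S|,1\}$ and $\omega(M_p(G)) = |L| + \min\{|S|,1\}$. The two facts I will keep using are that any two long edges conflict and that a long edge conflicts with every other edge.

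The statements for $p \leq 1$, and the value $A(M,p,k)=1$ for $p\geq 2$, are then immediate from Theorem~\ref{thm:generalMatrices}: since $m_2+m_4 = 1$, case~\ref{enum:m2m40} applies when $p\leq 1$ (giving $A=1$ and $W=\binom{k}{2}$, both realized by an ordered $K_k$), and case~\ref{enum:m2m4greater0} gives $A(M,p,k)=1$ for $p\geq 2$; a witness here is $K_k$ placed on a vertex set whose consecutive gaps are all $\geq p$, so that every edge is long and $M_p(K_k)$ is complete. The remaining task is to determine $W(M,p,k)$ for $p\geq 2$.

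For the upper bound I would take $G=K_k$ on vertex set $[k]$. An edge $\{i,j\}$ has length $j-i$, so the long edges are exactly those with $j-i\geq p$, and summing over lengths gives $|L| = \sum_{d=p}^{k-1}(k-d) = \binom{k-p+1}{2}$. If $k\leq p$ then $L=\emptyset$, $M_p(K_k)$ is edgeless, and together with the trivial bound from~\eqref{eq:general-bounds} this yields $W(M,p,k)=1$. If $k\geq p+1$ then $L\neq\emptyset$ and (as $p\geq 2$, short edges such as $\{1,2\}$ exist) $S\neq\emptyset$, so $\omega(M_p(K_k)) = |L|+1 = \binom{k-p+1}{2}+1$, giving $W(M,p,k)\leq \binom{k-p+1}{2}+1$.

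The matching lower bound for $k\geq p+1$ is the heart of the argument, and is where Lemma~\ref{lem:LongEdges} does the work. Let $G$ be any ordered graph with $\chi(G)\geq k$. Applying Lemma~\ref{lem:LongEdges} with $q=p$ (legitimate since $k>p$) produces a set $T$ of $\binom{k-p+1}{2}$ edges of length $\geq p$, i.e.\ long edges, which already form a clique in $M_p(G)$. Since $k\geq p+1\geq 3$, the ``moreover'' clause of the lemma also yields an edge $e\notin T$ of length $\geq p-1\geq 1$. The step requiring a moment's care is that even if this $e$ is short (length exactly $p-1$), it still conflicts with every edge of $T$, because the conflict condition is a disjunction and each edge of $T$ is long; hence $T\cup\{e\}$ is a clique of size $\binom{k-p+1}{2}+1$ in $M_p(G)$, so $\omega(M_p(G))\geq \binom{k-p+1}{2}+1$, and since $G$ was arbitrary, $W(M,p,k)\geq\binom{k-p+1}{2}+1$. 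The only genuine obstacle is this lower bound: one must harvest $\binom{k-p+1}{2}$ pairwise conflicting long edges (delegated to Lemma~\ref{lem:LongEdges}) and then observe that a single near-long edge is automatically in conflict with all of them, producing the extra $+1$; everything else is bookkeeping, and all the upper bounds are attained by complete graphs, as required for Theorem~\ref{thm:specialMatrices}.
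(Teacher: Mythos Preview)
Your proof is correct and follows essentially the same route as the paper: reduce to $M=\M{-}{+}{0}{0}$ via Observation~\ref{obs:swapping-columns}, invoke Theorem~\ref{thm:generalMatrices}~\ref{enum:m2m40} and~\ref{enum:m2m4greater0} for the easy cases, use $K_k$ on $[k]$ for the upper bound on $W(M,p,k)$, and apply Lemma~\ref{lem:LongEdges} with $q=p$ (together with its ``moreover'' clause) for the matching lower bound. Your description of $M_p(G)$ as the join of a clique on $L$ with an edgeless graph on $S$, and the explicit formulas $\alpha=\max\{|S|,1\}$ and $\omega=|L|+\min\{|S|,1\}$, make the structure slightly more transparent than the paper's phrasing, but the substance is identical.
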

\begin{proof}
 
 Consider $M=\M{-}{+}{0}{0}$.
 We have $A(M,p,k) = 1$ for all $p,k \in \bZ$, $k \geq 2$ and, if $p\leq 1$, $W(M,p,k) = \binom{k}{2}$ for all $k\geq 2$ due to Theorem~\ref{thm:generalMatrices}~\ref{enum:m2m40} and~\ref{enum:m2m4greater0}.
 This leaves to consider $W(M,p,k)$ in the case $p\geq 2$.

 Here two edges $e_1,e_2$ are conflicting if and only if at least one of them has length at least $p$.
 We call edges of length at least $p$ the \emph{long edges} and edges of length at most $p-1$ the \emph{short edges}.
 A clique in $M_p(G)$ is a set of edges in $G$, at most one of which is short, while an independent set in $M_p(G)$ is a set of edges in $G$ with only short edges.
 Hence $\omega(M_p(G))$ is just the total number of long edges (plus one if there is at least one short edge), and $\alpha(M_p(G))$ is the number of short edges.

 \medskip

 For any $k \geq 2$ consider $G=K_k$ with a vertex set $[k]$.
 If $k\leq p$, then there are no long edges.
 Hence $W(M,p,k) \leq \omega(M_p(G))=1$, since $\chi(G) = k$.
 Therefore $W(M,p,k)=1$.
 If $k\geq p+1$, then there is at least one long and one short edge in $G$.
 There are $k-\ell$ edges of length $\ell$ in this $G$.
 In particular, since $\chi(G) = k$, $W(M,p,k) \leq \omega(M_p(G)) = 1+\sum_{\ell=p}^{k-1}(k-\ell)=1+\binom{k-p+1}{2}$.
 
 \medskip
 
 Now consider an arbitrary ordered graph $G$ with $\chi(G)= k \geq p+1$.
 By Lemma~\ref{lem:LongEdges} there are $1+\binom{k-p+1}{2}$ edges such that all but one of them is long.
 Hence $\omega(M_p(G)) \geq 1+\binom{k-p+1}{2}$.
 This shows that $W(M,p,k)=1+\binom{k-p+1}{2}$.
 
 \medskip
 
 Finally, if $M' = \M{0}{0}{-}{+}$, then $M'$ is obtained from $M=\M{-}{+}{0}{0}$ by switching the first with the third and the second with the last column.
 Thus $W(M',p,k)=W(M,p,k)$ and $A(M',p,k)=A(M,p,k)$.
\end{proof}

%
%
%
%
%
%
%
%
%


\begin{proposition}[At least one edge of length at most $-p$, row $5$ in Table~\ref{tab:detailed}]\label{PM00}
 ~\\
 Let $M \in \{\M{+}{-}{0}{0}, \M{0}{0}{+}{-}\}$.
 \begin{compactitem}[\enskip]
  \item If $p \geq 0$, then $A(M,p,k) = \binom{k}{2}$ and $W(M,p,k) = 1$ for all $k \geq 2$.
 
  \item If $p \leq -1$, then $A(M,p,k) = 1$ for $2\leq k \leq |p|+1$ and $A(M,p,k) = \binom{k-|p|}{2}$ for $k \geq |p|+2$ and $W(M,p,k) = 1$ for all $k \geq 2$.
 \end{compactitem}
\end{proposition}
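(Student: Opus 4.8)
The plan is to reduce everything to the matrix $M = \M{+}{-}{0}{0}$; the case $M = \M{0}{0}{+}{-}$ is then immediate from Observation~\ref{obs:swapping-columns}, since interchanging the first column with the third and the second with the fourth turns one matrix into the other. For $M = \M{+}{-}{0}{0}$ one computes $M(u_1,v_1,u_2,v_2)^\top = u_1 - v_1$, so two edges $e_1 = (u_1,v_1)$, $e_2 = (u_2,v_2)$ conflict precisely when $u_1 - v_1 \geq p$ or $u_2 - v_2 \geq p$, which --- since every edge has positive length --- is equivalent to at least one of $e_1$, $e_2$ having length at most $-p$. This places us in the same setting as Proposition~\ref{MP00}, but with the roles of short and long edges interchanged, and I would carry out the proof in parallel to that one.

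First I would dispose of $p \geq 0$: then no edge has length $\leq -p$, so $M_p(G)$ is always edgeless and the values $A(M,p,k) = \binom{k}{2}$, $W(M,p,k) = 1$ are already delivered by Theorem~\ref{thm:generalMatrices}~\ref{enum:m1m20} (note $m_1+m_2 = m_3+m_4 = 0$, $m_2,m_4 \leq 0$, and $p > m_2+m_4 = -1$), realized by $K_k$ on any $k$-element vertex set. For $p \leq -1$ I would call an edge \emph{short} if its length is at most $|p|$ and \emph{long} otherwise. A clique of $M_p(G)$ is then a set of edges containing at most one long edge, and an independent set of $M_p(G)$ is a set of edges all of which are long, or a single edge. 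Hence $\omega(M_p(G))$ equals the number of short edges, plus one if $G$ has a long edge, and $\alpha(M_p(G))$ equals the number of long edges if $G$ has one and equals $1$ otherwise.

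It then remains to pin down the two minima and to check that the extremal complete graphs work. For $W$: taking $K_k$ on $\{i(|p|+1) \mid i \in [k]\}$ makes every edge long, so $\omega(M_p(K_k)) = 1$, and since a conflict graph always has at least one vertex this gives $W(M,p,k) = 1$ for all $k \geq 2$. For $A$: in $K_k$ on $[k]$ the number of long edges is $\sum_{\ell = |p|+1}^{k-1}(k-\ell)$, which is $0$ when $2 \leq k \leq |p|+1$ and $\binom{k-|p|}{2}$ when $k \geq |p|+2$; hence $\alpha(M_p(K_k))$ is $1$ in the former range (giving $A(M,p,k) = 1$) and $\binom{k-|p|}{2}$ in the latter (giving $A(M,p,k) \leq \binom{k-|p|}{2}$). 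For the matching lower bound when $k \geq |p|+2$ I would apply Lemma~\ref{lem:LongEdges} with its parameter set to $|p|+1$ (legitimate because $k > |p|+1$): every ordered graph $G$ with $\chi(G) \geq k$ then contains $\binom{k-|p|}{2}$ edges of length at least $|p|+1$, i.e.\ long edges, and any collection of long edges is an independent set in $M_p(G)$, so $\alpha(M_p(G)) \geq \binom{k-|p|}{2}$.

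I do not anticipate a genuine obstacle: the argument is routine and mirrors the proof of Proposition~\ref{MP00}. The two spots that warrant care are the sign bookkeeping --- so that short and long really are swapped relative to Proposition~\ref{MP00} and the cutoff comes out as $-p = |p|$ --- and the fact that Lemma~\ref{lem:LongEdges} must be invoked with parameter $|p|+1$ rather than $|p|$, which is exactly what produces the two ranges $2 \leq k \leq |p|+1$ and $k \geq |p|+2$ appearing in the statement.
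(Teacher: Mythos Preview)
Your proposal is correct and follows essentially the same route as the paper: reduce to $M=\M{+}{-}{0}{0}$ via Observation~\ref{obs:swapping-columns}, read off the conflict condition as ``at least one edge has length $\leq |p|$'', handle $p\geq 0$ through Theorem~\ref{thm:generalMatrices}~\ref{enum:m1m20}, and for $p\leq -1$ compute $A(M,p,k)$ from $K_k$ on $[k]$ for the upper bound and Lemma~\ref{lem:LongEdges} with parameter $q=|p|+1$ for the lower bound. The one place you differ is in showing $W(M,p,k)=1$ for $p\leq -1$: the paper invokes Theorem~\ref{thm:generalMatrices}~\ref{enum:m2m4smaller0}, whereas you exhibit $K_k$ on $\{i(|p|+1)\mid i\in[k]\}$ directly; your explicit construction is perfectly fine and arguably cleaner, since for this $M$ one has $m_4=0$ rather than $m_4<0$.
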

\begin{proof}
 
 consider $M=\M{+}{-}{0}{0}$.
 We have $W(M,p,k) = 1$ for all $p,k \in \bZ$, $k \geq 2$ and, if $p\geq 0$, $A(M,p,k) = \binom{k}{2}$ for all $k\geq 2$ due to Theorem~\ref{thm:generalMatrices}~\ref{enum:m1m20} and~\ref{enum:m2m4smaller0}.
 This leaves to consider $A(M,p,k)$ in the case $p \leq -1$.

 Here edges $(u_1,v_1)$ and $(u_2,v_2)$ are conflicting if $v_1-u_1\leq -p$ or $v_2-u_2 \leq -p$, that is, if one of the edges has length at most $-p$.
 Let $q=-p$.
 We call edges of length at least $q+1$ the \emph{long edges} and edges of length at most $q$ the \emph{short edges}.
 Then a clique in $M_p(G)$ is a set of edges in $G$, at most one of which is long, while an independent set in $M_p(G)$ is a set of edges in $G$ with only long edges.
 Hence $\omega(M_p(G))$ is just the total number of short edges (plus one if there is at least one long edge), and $\alpha(M_p(G))$ is the total number of long edges.

 \medskip

 For any $k \geq 2$ consider $G=K_k$ with vertex set $[k]$.
 If $k\leq q+1$, then there are no long edges.
 Hence $A(M,p,k) \leq \alpha(M_p(G))=1$, since $\chi(G) = k$.
 Therefore $A(M,p,k)=1$.
 If $k\geq q+2$, then there is at least one long edge in $G$.
 There are $k-\ell$ edges of length $\ell$ in this $G$.
 In particular, since $\chi(G) = k$, $A(M,p,k) \leq \alpha(M_p(G)) = \sum_{\ell=q+1}^{k-1}(k-\ell)=\binom{k-|p|}{2}$.
 
 \medskip
 
 Now consider an arbitrary ordered graph $G$ with $\chi(G)= k \geq q+2$.
 Then there are at least $\binom{k-q}{2}$ long edges in $G$ due to Lemma~\ref{lem:LongEdges}, i.e., $\alpha(M_p(G)) \geq \binom{k-q}{2}$.
 This shows that $A(M,p,k)=\binom{k-q}{2}$.
 
 \medskip
 
 Finally, if $M' = \M{0}{0}{+}{-}$, then $M'$ is obtained from $M=\M{+}{-}{0}{0}$ by switching the first with the third and the second with the last column.
 Thus $W(M',p,k)=W(M,p,k)$ and $A(M',p,k)=A(M,p,k)$.
\end{proof}

%
%
%
%
%
%
%
%
%
%
%
%
%
%


\begin{proposition}[lengths sum to at least $p$, row $6$ in Table~\ref{tab:detailed}]\label{MPMP}
 ~\\
 Let $M = \M{-}{+}{-}{+}$.
 \begin{compactitem}[\enskip]
  \item If $p \leq 2$, then $A(M,p,k) = 1$ and $W(M,p,k) = \binom{k}{2}$ for all $k \geq 2$.
 
  \item If $p \geq 3$, then $A(M,p,k) = 1$ for all $k \geq 2$ and $W(M,p,k) = 1$ for $2 \leq k \leq \ceil{\frac{p}{2}}$ and $W(M,p,k) = p \bmod 2 + \binom{k-\ceil{\frac{p}{2}}+1}{2}$ for $k \geq \ceil{\frac{p}{2}}+1$.
 \end{compactitem}
\end{proposition}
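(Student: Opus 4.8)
The cases $p\le 2$ and the value of $A(M,p,k)$ for every $p$ are immediate from Theorem~\ref{thm:generalMatrices}: the matrix $M$ is translation invariant with $m_2+m_4=2$, so part~\ref{enum:m2m40} applies exactly when $\max\{p,0\}\le 2$, i.e.\ for $p\le 2$, giving $A(M,p,k)=1$ and $W(M,p,k)=\binom{k}{2}$; and part~\ref{enum:m2m4greater0} gives $A(M,p,k)=1$ for all $p$ and $k$, attained by some $K_k$. So the only thing left to prove is the value of $W(M,p,k)$ for $p\ge 3$.

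First I would record the combinatorial meaning of the conflict relation: $M(u_1,v_1,u_2,v_2)^\top=(v_1-u_1)+(v_2-u_2)$, so two edges conflict precisely when their lengths sum to at least $p$ (the second inequality in the definition of $M_p(G)$ adds nothing, as $M$ is symmetric in the two edge-slots). Hence a set $C$ of at least two edges is a clique in $M_p(G)$ if and only if the two smallest lengths occurring in $C$ sum to at least $p$. From this I extract the dichotomy used throughout: either every edge of $C$ has length at least $\ceil{p/2}$, or exactly one edge of $C$ is shorter, of some length $t\le\ceil{p/2}-1$, in which case all remaining edges have length at least $p-t\ge\floor{p/2}+1$.

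For the upper bound take $G=K_k$ on vertex set $[k]$, where there are $\binom{k-q+1}{2}$ edges of length at least $q$. If $2\le k\le\ceil{p/2}$ then every edge has length at most $k-1\le\ceil{p/2}-1$, no two edges conflict, and $\omega(M_p(G))=1$, so $W(M,p,k)=1$. If $k\ge\ceil{p/2}+1$, the dichotomy bounds the size of any clique of $M_p(K_k)$ by the larger of $\binom{k-\ceil{p/2}+1}{2}$ (first alternative) and $1+\binom{k-\floor{p/2}}{2}$ (second alternative, using $t=\ceil{p/2}-1$); a short parity computation shows this maximum equals $p\bmod 2+\binom{k-\ceil{p/2}+1}{2}$ — the first alternative wins for even $p$, the second by exactly one for odd $p$ — and the corresponding clique is realized inside $[k]$, so $W(M,p,k)\le\omega(M_p(K_k))=p\bmod 2+\binom{k-\ceil{p/2}+1}{2}$.

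For the matching lower bound I would invoke Lemma~\ref{lem:LongEdges} with $q=\ceil{p/2}$ (valid since $k\ge\ceil{p/2}+1>q$): every ordered graph $G$ with $\chi(G)\ge k$ contains a set $S$ of $\binom{k-\ceil{p/2}+1}{2}$ edges of length at least $\ceil{p/2}$, any two of which have length-sum at least $2\ceil{p/2}\ge p$, so $S$ is a clique. If $p$ is odd, Lemma~\ref{lem:LongEdges} additionally supplies (as $k\ge\ceil{p/2}+1\ge 3$) an edge $e\notin S$ of length at least $\ceil{p/2}-1=(p-1)/2$, and $e$ with any edge of $S$ has length-sum at least $(p-1)/2+(p+1)/2=p$, so $S\cup\{e\}$ is a clique; this accounts for the $p\bmod 2$ term. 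Thus $\omega(M_p(G))\ge p\bmod 2+\binom{k-\ceil{p/2}+1}{2}$ for every such $G$, which together with the previous paragraph yields the claimed equality and that it is attained by $K_k$ on $[k]$. The only step requiring real care is the parity bookkeeping in the upper bound — confirming that the two alternatives combine to the stated closed form and that the extremal clique lives in $[k]$ — but this is routine once the structural dichotomy is set up; no genuinely difficult point remains.
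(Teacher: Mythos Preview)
Your proof is correct and follows essentially the same approach as the paper: both reduce the $p\le 2$ and $A$-cases to Theorem~\ref{thm:generalMatrices}, read the conflict as ``length-sum $\ge p$'', set up the same short/long dichotomy for cliques, compute the upper bound on $K_k$ with vertex set $[k]$ by taking all edges of length $\ge \lceil p/2\rceil$ (plus one edge of length $\lceil p/2\rceil-1$ when $p$ is odd), and obtain the matching lower bound from Lemma~\ref{lem:LongEdges}. The only cosmetic difference is that the paper names $q=\lceil p/2\rceil-1$ while you effectively use $q=\lceil p/2\rceil$ when invoking Lemma~\ref{lem:LongEdges}, but the resulting sets and counts are identical.
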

\begin{proof}

 We have $A(M,p,k) = 1$ for all $p,k \in \bZ$, $k \geq 2$ and, if $p\leq 2$, $W(M,p,k) = \binom{k}{2}$ for all $k\geq 2$ due to Theorem~\ref{thm:generalMatrices}~\ref{enum:m2m40} and~\ref{enum:m2m4greater0}.
 This leaves to consider $W(M,p,k)$ in the case $p\geq 3$.

 Here edges $(u_1,v_1)$ and $(u_2,v_2)$ are conflicting if $v_1-u_1 + v_2-u_2 \geq p$, that is, if their lengths add up to at least $p$.
 Let $q = \ceil{p/2}-1$.
 We call an edge \emph{short} if its length is at most $q$, and \emph{long} otherwise.
 Then a clique in $M_p(G)$ could be of two kinds.
 Either it is a set of only long edges in $G$, or there is one short edge of length $\ell \leq q$ and each remaining edge has length at least $p - \ell$.
 An independent set in $M_p(G)$ is a set of edges in $G$ where the lengths of any two longest edges add up to at most $p-1$.

 \medskip

 First consider $G = K_k$ with vertex set $[k]$.
 If $k\leq \ceil{p/2}$, then the largest sum of the lengths of two edges in $G$ is $k-1+k-2\leq p-1$.
 Hence $M_p(G)$ is empty and, since $\chi(G)=k$, $W(M,p,k) \leq \omega(M_p(G))=1$.
 Therefore $W(M,p,k)=1$ in this case.
 Now consider $k \geq \ceil{p/2} + 1$. 
 Recall that for each $\ell = 1,\ldots,k-1$ there are exactly $k-\ell$ edges of length exactly $\ell$ in $G$.
 A largest clique in $M_p(G)$ contains all long edges and, if $p$ is odd, one edge of length $q$.
 It follows that if $p$ is even, then $\omega(M_p(K_k)) = \sum_{\ell=q+1}^{k-1}(k-\ell) = \binom{k-q}{2}$.
 While if $p$ is odd, then $\omega(M_p(K_k)) = 1 + \sum_{\ell = q+1}^{k-1} (k-\ell) = 1+\binom{k-q}{2}$.
 Altogether this shows that $W(M,p,k) \leq p \bmod 2 + \binom{k-q}{2}$.
 
 \medskip
 
 Now consider an arbitrary ordered graph $G$ with $\chi(G)=k \geq \ceil{p/2}+1 \geq 3$.
 By Lemma~\ref{lem:LongEdges} there is a set $S$ with $1+\binom{k-q}{2}$ edges, such that one of them, say $e$, has length at least $q$ and all others are long.
 If $p$ is odd, then $S$ is a clique in $M_p(G)$.
 If $p$ is even, then $S - e$ is a clique in $M_p(G)$. 
 Therefore $\omega(M_p(G)) \geq \binom{k-q}{2} + p \bmod 2$.
 
 Hence $W(M,p,k) \geq p \bmod 2 + \binom{k-q}{2}$.
 Altogether $W(M,p,k) = p \bmod 2 + \binom{k-\ceil{p/2}+1}{2}$, if $k \geq \ceil{p/2}+1$, and $W(M,p,k) = 1$ otherwise.
\end{proof}

%
%
%
%
%
%

\begin{proposition}[lengths sum to at most $p$, row $7$ in Table~\ref{tab:detailed}]\label{PMPM}
 ~\\
 Let $M = \M{+}{-}{+}{-}$.
 \begin{compactitem}[\enskip]
  \item If $p \leq -1$, then $A(M,p,k) = 1$ for $2 \leq k \leq \ceil{\frac{1-p}{2}}$ and $A(M,p,k) = (1-p) \bmod 2 + \binom{k-\ceil{\frac{1-p}{2}}+1}{2}$ for $k \geq \ceil{\frac{1-p}{2}}+1$ and $W(M,p,k) = 1$ for all $k \geq 2$.
  
  \item If $p \geq -1$, then $A(M,p,k) = \binom{k}{2}$ and $W(M,p,k) = 1$ for all $k \geq 2$.  
 \end{compactitem}
\end{proposition}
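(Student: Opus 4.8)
The plan is to derive Proposition~\ref{PMPM} from Theorem~\ref{thm:generalMatrices} together with Proposition~\ref{MPMP}, using the complement duality of Lemma~\ref{lem:small-facts}~\ref{enum:exchange}. Write $M = \M{+}{-}{+}{-}$, so $m_1 = m_3 = +1$ and $m_2 = m_4 = -1$; $M$ is translation invariant, and for edges $(u_1,v_1)$, $(u_2,v_2)$ one computes $M(u_1,v_1,u_2,v_2)^\top = M(u_2,v_2,u_1,v_1)^\top = -(v_1-u_1) - (v_2-u_2)$, so the two edges conflict with respect to $M$ and $p$ exactly when their lengths sum to at most $-p$, and they do not conflict exactly when their lengths sum to at least $1-p$. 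I would first settle everything except $A(M,p,k)$ for $p \leq -1$ by a direct appeal to Theorem~\ref{thm:generalMatrices}: since $m_2, m_4 < 0$, part~\ref{enum:m2m4smaller0} gives $W(M,p,k) = 1$ for all $p$ and all $k \geq 2$, which is the claimed value of $W$ in both bullets; and since $m_1+m_2 = m_3+m_4 = 0$, $m_2, m_4 \leq 0$, and $m_2+m_4 = -2$, part~\ref{enum:m1m20} applies precisely for $p > -2$, i.e.\ $p \geq -1$, and yields $A(M,p,k) = \binom{k}{2}$ for all $k \geq 2$, which is the whole second bullet.

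The remaining task is to determine $A(M,p,k)$ for $p \leq -1$, and here I would invoke Lemma~\ref{lem:small-facts}~\ref{enum:exchange}. With our values of the $m_i$ the matrix it produces from $M$ is $\MM{-}{+}{-}{+}{-}{+}{-}{+}$, whose two rows are identical; hence the conflict graph it defines at parameter $1-p$ on any ordered graph $G$ is cut out by a single inequality and coincides with the conflict graph of the $1\times 4$ matrix $\tilde M := \M{-}{+}{-}{+}$ at parameter $1-p$. Consequently $A(M,p,k) = W(\tilde M, 1-p, k)$, and $\tilde M$ is exactly the matrix treated in Proposition~\ref{MPMP} (the case ``lengths sum to at least $1-p$''). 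For $p = -1$ we have $1-p = 2$, so the first bullet of Proposition~\ref{MPMP} gives $W(\tilde M, 2, k) = \binom{k}{2}$, which agrees both with the value already found and with the first-bullet formula of Proposition~\ref{PMPM} evaluated at $p = -1$, namely $0 + \binom{k}{2}$. For $p \leq -2$ we have $1-p \geq 3$, so the second bullet of Proposition~\ref{MPMP} with parameter $1-p$ gives $W(\tilde M, 1-p, k) = 1$ for $2 \leq k \leq \ceil{\frac{1-p}{2}}$ and $W(\tilde M, 1-p, k) = (1-p) \bmod 2 + \binom{k - \ceil{\frac{1-p}{2}} + 1}{2}$ for $k \geq \ceil{\frac{1-p}{2}} + 1$, which is precisely the formula asserted for $A(M,p,k)$; moreover the ordered graph $K_k$ extremal for $W(\tilde M,1-p,k)$ is extremal for $A(M,p,k)$ as well, since the two conflict graphs are complementary.

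I do not anticipate a genuine obstacle. The two points needing a line of care are the elementary observation that a $2\times 4$ matrix with two equal rows induces the same conflict graph as the $1\times 4$ matrix given by that row (immediate from the definition of conflicting edges), and the bookkeeping at the boundary $p = -1$, where the value $\binom{k}{2}$ has to be reached consistently from Theorem~\ref{thm:generalMatrices}~\ref{enum:m1m20} and from the first bullet of Proposition~\ref{MPMP}. A self-contained alternative would mirror the proof of Proposition~\ref{MPMP}: call an edge \emph{short} if its length is at most $\ceil{\frac{1-p}{2}}-1$ and \emph{long} otherwise, note that an independent set of $M_p(G)$ consists of long edges together with at most one short edge respecting the length-sum condition, obtain the upper bound from $K_k$ on $[k]$, and obtain the lower bound from Lemma~\ref{lem:LongEdges}; but this just reconstructs the clique analysis of Proposition~\ref{MPMP}, so the reduction above is the shorter route.
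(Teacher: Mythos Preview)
Your proposal is correct and takes essentially the same approach as the paper, which simply states that the result follows from Proposition~\ref{MPMP} and Lemma~\ref{lem:small-facts}~\ref{enum:exchange}. You additionally spell out two points the paper leaves implicit: that the $2\times 4$ matrix produced by Lemma~\ref{lem:small-facts}~\ref{enum:exchange} has two identical rows and hence defines the same conflict graph as $\tilde M = \M{-}{+}{-}{+}$, and the direct derivation of the second bullet from Theorem~\ref{thm:generalMatrices}~\ref{enum:m1m20} and~\ref{enum:m2m4smaller0} (which is redundant but harmless, since Proposition~\ref{MPMP} already covers all $p$ via the duality).
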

\begin{proof}
 This follows immediately from Proposition~\ref{MPMP} and Lemma~\ref{lem:small-facts}~\ref{enum:exchange}.
\end{proof}


\begin{proposition}[lengths differ by at least $p$, row $8$ in Table~\ref{tab:detailed}]\label{PMMP}
 ~\\
 Let $M \in \{\M{+}{-}{-}{+}, \M{-}{+}{+}{-}\}$.
 \begin{compactitem}[\enskip]
  \item If $p \leq 0$, then $A(M,p,k) = 1$ and $W(M,p,k) = \binom{k}{2}$ for all $k \geq 2$.
 
  \item If $p \geq 1$, then $A(M,p,k) = 1$ and $W(M,p,k) = \ceil{\frac{k-1}{p}}$ for all $k \geq 2$.
 \end{compactitem}
\end{proposition}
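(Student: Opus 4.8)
The plan is to first reduce to the representative matrix $M = \M{+}{-}{-}{+}$, since $\M{-}{+}{+}{-} = -M$ and $\overline{M} = \M{+}{-}{-}{+} = M$, so $-\overline{M} = -M$; wait—let me instead observe directly that $\M{-}{+}{+}{-}$ is obtained from $\M{+}{-}{-}{+}$ by reversing the columns of $-M$, so Observation~\ref{obs:swapping-columns} together with Lemma~\ref{lem:small-facts}~\ref{enum:reverse-matrix} shows the two matrices give the same values of $A$ and $W$. So it suffices to treat $M = \M{+}{-}{-}{+}$. For this $M$, two edges $(u_1,v_1)$ and $(u_2,v_2)$ are conflicting iff $(v_1-u_1) - (v_2-u_2) \geq p$ or $(v_2-u_2) - (v_1-u_1) \geq p$, i.e.\ iff the two edge-lengths differ by at least $p$. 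Thus a clique in $M_p(G)$ is a set of edges whose lengths are pairwise at distance $\geq p$, and an independent set is a set of edges whose lengths all lie in some interval of length $p-1$.

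For $p \leq 0$ every pair of edges is conflicting (lengths always differ by $\geq 0 \geq p$), so $M_p(G)$ is complete: $A(M,p,k)=1$ always, and since $\chi(G)\geq k$ forces $|E(G)|\geq\binom{k}{2}$ while $M_p(K_k)$ has exactly $\binom k2$ vertices, $W(M,p,k)=\binom k2$, realized by $K_k$. (This also follows from Theorem~\ref{thm:generalMatrices}~\ref{enum:m2m40} and~\ref{enum:m2m4smaller0} applied to the two rows of the associated matrix, but the direct argument is cleaner.) The substance is the case $p\geq 1$. There, $A(M,p,k)=1$ because in any $G$ with $\chi(G)\geq k\geq 2$ there are two distinct edges sharing a common left endpoint — namely two edges at the leftmost vertex of a $k$-critical subgraph — hence two edges of different lengths, so $M_p(G)$ has an edge and $\alpha(M_p(G))\geq\dots$ — actually I need $\alpha\le$ something; rather: take $G=K_k$ on vertex set $\{ip: i\in[k]\}$ scaled so all lengths are distinct multiples of $p$; then any two edges of $K_k$ have lengths differing by a multiple of $p$, hence either equal or differing by $\geq p$; two edges are non-conflicting iff they have the same length, but for fixed length $\ell$ there is no guarantee of many such edges—so instead observe that on this scaled $K_k$, two edges conflict iff their lengths differ, hence a maximum independent set consists of edges of one common length, and the number of edges of a fixed length $\ell p$ is $k-\ell \leq k-1$; since a single length class of size as large as possible has $k-1$ edges (length $p$), we get $\alpha(M_p(K_k)) = k-1$. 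That gives only $A(M,p,k)\leq k-1$, not $=1$; I must be more careful. The correct construction: scale $K_k$ so that \emph{all $\binom k2$ edge-lengths are pairwise distinct and pairwise differ by at least $p$} (e.g.\ place vertex $i$ at $p\cdot 2^i$). Then $M_p(K_k)$ is complete, so $\alpha(M_p(K_k))=1$, giving $A(M,p,k)=1$ (the matching lower bound $A\ge 1$ is trivial from~\eqref{eq:general-bounds}), and this construction is exactly the kind guaranteed by Theorem~\ref{thm:generalMatrices}'s closing sentence.

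For $W(M,p,k)$ with $p\geq 1$: the upper bound comes from $G=K_k$ on vertex set $[k]$. A clique in $M_p(K_k)$ is a set of edges whose lengths, all lying in $\{1,\dots,k-1\}$, are pairwise $\geq p$ apart; the largest such set of lengths is $\{1,1+p,1+2p,\dots\}$, of size $\lceil (k-1)/p\rceil$, and one edge of each such length realizes a clique, so $W(M,p,k)\le\omega(M_p(K_k))=\lceil(k-1)/p\rceil$. For the lower bound, take any $G$ with $\chi(G)\geq k$ and pass to a $k$-critical subgraph $G'$ of minimum degree $\geq k-1$; let $v$ be the rightmost vertex of $G'$, incident to edges $e_1,\dots,e_{d}$ with $d\ge k-1$, listed in increasing length (all left endpoints lie left of $v$, so lengths are $1,2,\dots$ up to $v-v_1\ge k-1$, but more simply the $d$ lengths are $d$ distinct positive integers). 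Picking every $p$-th of these edges in length order yields $\lceil d/p\rceil\ge\lceil(k-1)/p\rceil$ edges whose lengths are pairwise $\geq p$ apart—a clique in $M_p(G')\subseteq M_p(G)$. Hence $W(M,p,k)\ge\lceil(k-1)/p\rceil$. The main obstacle is purely bookkeeping: making sure the rightmost (or leftmost) vertex of a $k$-critical subgraph really has $\geq k-1$ incident edges with lengths that are distinct positive integers so that the ``every $p$-th'' selection works, and choosing the scaling of $K_k$ in the $A$-construction so that all $\binom k2$ lengths are simultaneously pairwise far apart; both are routine. Finally, the reduction for $\M{-}{+}{+}{-}$ via Observation~\ref{obs:swapping-columns} and Lemma~\ref{lem:small-facts}~\ref{enum:reverse-matrix} completes the proof.
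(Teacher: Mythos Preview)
Your proof is correct and follows essentially the same route as the paper: interpret conflicts as edge-length differences $\geq p$, handle $p\leq 0$ by observing $M_p(G)$ is complete, get $W(M,p,k)\leq\lceil(k-1)/p\rceil$ from $K_k$ on $[k]$, and get the matching lower bound from the $\geq k-1$ distinct-length edges at an extremal vertex of a $k$-critical subgraph (the paper uses the leftmost vertex, you use the rightmost---immaterial). The only real difference is that for $A(M,p,k)=1$ when $p\geq 1$ the paper simply invokes Theorem~\ref{thm:generalMatrices}~\ref{enum:m2m4greater0}, whereas you build an explicit $K_k$ on $\{p\cdot 2^i\}$; your construction is valid (the values $2^j-2^i$ are pairwise distinct integers), but note that your parenthetical about ``Theorem~\ref{thm:generalMatrices}~\ref{enum:m2m40} and~\ref{enum:m2m4smaller0} applied to the two rows of the associated matrix'' is garbled---there is only one row here and~\ref{enum:m2m4smaller0} does not apply since $m_4=+1$---so just drop that aside.
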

\begin{proof}
 
 Consider $M=\M{+}{-}{-}{+}$.
 We have $A(M,p,k) = 1$ for all $p,k \in \bZ$, $k \geq 2$ and, if $p\leq 0$, $W(M,p,k) = \binom{k}{2}$ for all $k\geq 2$ due to Theorem~\ref{thm:generalMatrices}~\ref{enum:m2m40} and~\ref{enum:m2m4greater0}.
 This leaves to consider $W(M,p,k)$ in the case $p\geq 1$.

 Here edges $(u_1,v_1)$ and $(u_2,v_2)$ are conflicting if $(v_2-u_2)-(v_1-u_1) \geq p$ or $(v_1-u_1)-(v_2-u_2) \geq p$, i.e., if their lengths differ by at least $p$.
 A clique in $M_p(G)$ is a set of edges in $G$ whose lengths differ pairwise by at least $p$.
 An independent set in $M_p(G)$ is a set of edges in $G$ whose lengths differ pairwise by at most $p-1$, i.e., all lengths are contained in some closed interval of length at most $p-1$.

\medskip

 Consider $G = K_k$ with vertex set $[k]$.
 The edges of $K_k$ determine exactly $k-1$ different lengths $1,\ldots,k-1$.
 As cliques in $M_p(G)$ correspond to edge sets in $G$ with lengths pairwise differing by at least $p$, a maximum clique in $M_p(G)$ has size $\ceil{(k-1)/p}$.
 Thus we have $W(M,p,k) \leq \omega(M_p(G)) = \ceil{(k-1)/p}$, as desired.
 
 \medskip
 
 Now consider an arbitrary ordered graph $G$ with $\chi(G)=k$.
 Let $G'$ denote a $k$-critical subgraph of $G$.
 Then $M_p(G')$ is a subgraph of $M_p(G)$ and hence $\omega(M_p(G))\geq \omega(M_p(G'))$.
 Let $F$ be the set of edges incident to the leftmost vertex $v$ in $G$.
 All edges in $F$ have pairwise distinct lengths, i.e., taking the subset of $F$ corresponding to every $p^{\text{th}}$ length gives a clique in $M_p(G)$.
 Hence $\omega(M_p(G')) \geq \ceil{(k-1)/p}$, since $G'$ has minimum degree at least $k-1$.
 As $G$ was arbitrary, this gives $W(M,p,k) \geq \ceil{(k-1)/p}$.
 
 \medskip
 
 Finally, if $M' = \M{-}{+}{+}{-}$, then $M'$ is obtained from $M=\M{+}{-}{-}{+}$ by switching the first with the third and the second with the last column.
 Thus $W(M',p,k)=W(M,p,k)$ and $A(M',p,k)=A(M,p,k)$.
\end{proof}

%
%
%
%
%
%
%
%
%
%
%
%
%


\begin{proposition}[midpoints are at distance at least $p/2$, row $9$ in Table~\ref{tab:detailed}]\label{PPMM}
 ~\\
 Let $M \in \{\M{+}{+}{-}{-}, \M{-}{-}{+}{+}\}$.
 \begin{compactitem}[\enskip]
  \item If $p \leq 0$, then $A(M,p,k) = 1$ and $W(M,p,k) = \binom{k}{2}$ for all $k \geq 2$.
 
  \item If $p \geq 1$, then $A(M,p,k) = 1$ and $W(M,p,k) = \ceil{\frac{2k-3}{p}}$ for all $k \geq 2$.
 \end{compactitem}
\end{proposition}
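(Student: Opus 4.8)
The plan is to dispose of all cases except $W(M,p,k)$ for $M=\M{+}{+}{-}{-}$ and $p\ge 1$ by quoting Theorem~\ref{thm:generalMatrices}, then pin down that remaining value by matching an upper and a lower bound, and finally transfer everything to $M=\M{-}{-}{+}{+}$ via a column swap. For $p\le 0$ we have $m_2+m_4=0=\max\{p,0\}$, so Theorem~\ref{thm:generalMatrices}~\ref{enum:m2m40} gives $A(M,p,k)=1$ and $W(M,p,k)=\binom{k}{2}$; and since $m_2+m_4=0$ while $m_1,m_2\neq 0$, Theorem~\ref{thm:generalMatrices}~\ref{enum:m2m4greater0} gives $A(M,p,k)=1$ for every $p$. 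So only $W(M,p,k)$ with $p\ge 1$ needs a genuine argument. The starting observation is that two edges $(u_1,v_1)$ and $(u_2,v_2)$ conflict with respect to this $M$ and $p$ exactly when $|(u_1+v_1)-(u_2+v_2)|\ge p$; writing $m(e)=u+v$ for $e=(u,v)$ (twice the midpoint of $e$), a clique in $M_p(G)$ is precisely a set of edges whose $m$-values are pairwise at distance at least $p$, and in particular pairwise distinct since $p\ge 1$.

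For the upper bound I would take $G=K_k$ with vertex set $[k]$. The values $m(e)$ for $e\in E(K_k)$ are exactly the integers $3,4,\dots,2k-1$, a block of $2k-3$ consecutive integers, each realised by some edge. A largest subset of a block of $n$ consecutive integers with pairwise gaps at least $p$ has size $\ceil{n/p}$, so selecting one edge per value in such an optimal subset shows $\omega(M_p(K_k))=\ceil{(2k-3)/p}$, whence $W(M,p,k)\le\ceil{(2k-3)/p}$ and the witnessing graph is some $K_k$ as required.

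For the lower bound, take any ordered graph $G$ with $\chi(G)\ge k$ and a $k$-critical subgraph $G'$; then $G'$ has minimum degree at least $k-1$ and $M_p(G')$ is an induced subgraph of $M_p(G)$. Let $a$ and $b$ be the leftmost and rightmost vertices of $G'$. Every edge at $a$ goes to the right, so the $\ge k-1$ edges incident to $a$ contribute $\ge k-1$ distinct $m$-values, all at most $a+b$; symmetrically the $\ge k-1$ edges at $b$ contribute $\ge k-1$ distinct $m$-values, all at least $a+b$. The only value these two sets can share is $a+b$ itself, so together the edges at $a$ and $b$ realise at least $(k-1)+(k-1)-1=2k-3$ distinct values of $m$. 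Since any set of $2k-3$ distinct integers contains at least $\ceil{(2k-3)/p}$ elements pairwise at distance at least $p$ (greedily pick the smallest remaining element, which rules out at most $p$ integers), picking one edge per selected value yields a clique of size $\ceil{(2k-3)/p}$ in $M_p(G')\subseteq M_p(G)$. As $G$ was arbitrary this gives $W(M,p,k)\ge\ceil{(2k-3)/p}$, matching the upper bound. Finally, $\M{-}{-}{+}{+}$ arises from $\M{+}{+}{-}{-}$ by swapping the first column with the third and the second with the fourth, so Observation~\ref{obs:swapping-columns} transfers all values.

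The main obstacle is the ``at least $2k-3$ distinct midpoints'' count in the lower bound: using only one extreme vertex of $G'$ gives merely $k-1$ midpoint values and hence a bound off by roughly a factor of two, so the key point is to combine the leftmost and rightmost vertices, whose incident edges produce midpoint values lying on opposite sides of $a+b$ and therefore overlapping in at most one value. Once that is established, the passage to a $p$-spaced subclique is a routine pigeonhole step, and the small-$k$ or large-$p$ boundary cases (where $\ceil{(2k-3)/p}=1$) are handled uniformly by the same argument.
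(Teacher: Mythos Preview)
Your proof is correct and follows essentially the same approach as the paper: both invoke Theorem~\ref{thm:generalMatrices}~\ref{enum:m2m40} and~\ref{enum:m2m4greater0} for the easy cases, then for $W(M,p,k)$ with $p\ge 1$ use $K_k$ on $[k]$ for the upper bound and, for the lower bound, pass to a $k$-critical subgraph and count at least $2k-3$ distinct midpoint-values among the edges incident to the leftmost and rightmost vertices before extracting a $p$-spaced subset. Your greedy justification for the $p$-spaced selection is in fact slightly more explicit than the paper's ``every $p^{\text{th}}$ midpoint'' phrasing, and your overlap argument at $a+b$ is spelled out more carefully, but the underlying ideas are identical.
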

\begin{proof}
 
 Consider $M=\M{+}{+}{-}{-}$.
 We have $A(M,p,k) = 1$ for all $p,k \in \bZ$, $k \geq 2$ and, if $p\leq 0$, $W(M,p,k) = \binom{k}{2}$ for all $k\geq 2$ due to Theorem~\ref{thm:generalMatrices}~\ref{enum:m2m40} and~\ref{enum:m2m4greater0}.
 This leaves to consider $W(M,p,k)$ in the case $p\geq 1$.

 For an edge $(u_1,v_1)$ we think of $(u_1 + v_1)/2$ as its \emph{midpoint}.
 Note that the midpoints are not necessarily integers.
 Then edges $(u_1,v_1)$ and $(u_2,v_2)$ are conflicting if $|(u_1+v_1)/2 - (u_2+v_2)/2| \geq p/2$, that is, if their midpoints are of distance at least $p/2$.
 A clique in $M_p(G)$ is a set of edges in $G$ whose midpoints are pairwise at distance at least $p/2$.
 An independent set in $M_p(G)$ is a set of edges in $G$ whose midpoints are pairwise at distance at most $(p-1)/2$, i.e., all midpoints are contained in some closed interval of length at most $(p-1)/2$.

\medskip

 Consider $G = K_k$ with vertex set $[k]$.
 The edges of $G$ determine exactly $2k-3$ midpoints; one for each vertex that is neither the first nor the last vertex, and one for each gap between two consecutive vertices.
 Consecutive midpoints are at distance $1/2$.
 As cliques in $M_p(G)$ correspond to edge sets in $G$ with midpoints at pairwise distance at least $p/2$, a maximum clique in $M_p(G)$ has size $\ceil{\frac{(2k-3)/2}{p/2}} = \ceil{(2k-3)/p}$.
 Thus we have $W(M,p,k) \leq \omega(M_p(G)) = \ceil{(2k-3)/p}$, as desired.

 \medskip
 
 Now consider an arbitrary ordered graph $G$ with $\chi(G)=k$.
 Let $G'$ denote a $k$-critical subgraph of $G$.  
 Then $M_p(G')$ is a subgraph of $M_p(G)$ and hence $\omega(M_p(G))\geq \omega(M_p(G'))$. 
 Let $F$ be the set of edges incident to the first vertex or the last vertex (or both).
 All edges in $F$ have pairwise distinct midpoints.
 Taking a subset of $F$ corresponding to every $p^{\text{th}}$ midpoint of edges in $F$ gives a clique in $M_p(G')$.
 Hence $\omega(M_p(G')) \geq \ceil{|F|/p} \geq \ceil{(2k-3)/p}$, since $G'$ has minimum degree $k-1$.
 As $G$ was arbitrary, this gives $W(M,p,k) \geq \ceil{(2k-3)/p}$.
 Altogether $W(M,p,k) = \ceil{(2k-3)/p}$.
 
 \medskip
 
 Finally, if $M' = \M{-}{-}{+}{+}$, then $M'$ is obtained from $M=\M{+}{+}{-}{-}$ by switching the first with the third and the second with the last column.
 Thus $W(M',p,k)=W(M,p,k)$ and $A(M',p,k)=A(M,p,k)$ due to Observation~\ref{obs:swapping-columns}.
\end{proof}

%
%
%
%
%
%
%
%
%
%
%
%
 

For the next matrix $M = \M{+}{0}{0}{-}$ we can not rely on Theorem~\ref{thm:generalMatrices}.
As for the previous matrices, there are four cases to be considered: $A(M,p,k)$ and $W(M,p,k)$ for $p \leq 0$ and $p \geq 1$.
However, before we determine $A(M,p,k)$ and $W(M,p,k)$, we first investigate the structure of independent sets in $M_p(G)$ for an ordered graph $G$ and prove three lemmas.

A \emph{comparability graph} is a graph admitting a transitive orientation of its edges, i.e., an orientation such that for any three vertices $u,v,w$ it holds that if there is an edge directed from $u$ to $v$ and an edge directed from $v$ to $w$, then there is an edge between $u$ and $w$ and it is directed from $u$ to $w$.
As every comparability graph is perfect, in particular its chromatic number and clique number coincide~\cite{graph-classes}.

\begin{lemma}\label{lem:left-of-is-poset}
 For $M = \M{+}{0}{0}{-}$, every $p \geq 0$, and every ordered graph $G$, the graph $M_p(G)$ is a comparability graph and hence $\chi(M_p(G)) = \omega(M_p(G))$.
\end{lemma}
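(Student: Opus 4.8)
The plan is to realize $M_p(G)$ as the comparability graph of an explicit strict partial order on the edge set $E(G)$; since every comparability graph is perfect (as recalled just above the lemma), the equality $\chi(M_p(G)) = \omega(M_p(G))$ then follows immediately.

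First I would unwind the conflict relation for this particular matrix. For $M = \M{+}{0}{0}{-}$ and edges $e_1 = (u_1,v_1)$, $e_2 = (u_2,v_2)$ we have $M(u_1,v_1,u_2,v_2)^\top = u_1 - v_2$ and $M(u_2,v_2,u_1,v_1)^\top = u_2 - v_1$, so $e_1$ and $e_2$ are conflicting precisely when $u_1 \geq v_2 + p$ or $u_2 \geq v_1 + p$ --- informally, when one of the two edges lies entirely at least $p$ units to the right of the other. Because $p \geq 0$ and every edge $(u,v)$ has $u < v$, these two alternatives are mutually exclusive: if both held, adding the inequalities would give $u_1 + u_2 \geq v_1 + v_2 + 2p \geq v_1 + v_2$, contradicting $u_1 < v_1$ and $u_2 < v_2$.

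Next I would define a relation $\prec$ on $E(G)$ by setting $e_1 \prec e_2$ if and only if $u_2 \geq v_1 + p$, and verify that it is a strict partial order. Irreflexivity is immediate from $u < v$ together with $p \geq 0$. For transitivity, assume $e_1 \prec e_2$ and $e_2 \prec e_3$; from $u_2 \geq v_1 + p$ and $u_2 < v_2$ we get $v_2 > v_1$, and hence $u_3 \geq v_2 + p \geq v_1 + p$, i.e.\ $e_1 \prec e_3$. Combined with the mutual-exclusivity observation, for any two distinct edges at most one of $e_1 \prec e_2$ and $e_2 \prec e_1$ holds, and by the unwound definition both fail exactly when $e_1, e_2$ are non-conflicting; thus the two disjuncts in the conflict condition are precisely the two directions of $\prec$. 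Consequently, orienting each edge $e_1e_2$ of $M_p(G)$ from $e_1$ to $e_2$ whenever $e_1 \prec e_2$ produces a well-defined transitive orientation, so $M_p(G)$ is a comparability graph.

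Finally, since every comparability graph is perfect, $\chi(M_p(G)) = \omega(M_p(G))$; equivalently one may argue directly via Mirsky's theorem, a longest $\prec$-chain being a clique of $M_p(G)$ and a minimum partition of $E(G)$ into $\prec$-antichains providing a proper colouring of the same size. I do not anticipate a real obstacle here: the only step requiring care is the transitivity check, where one must make sure the hypothesis $p \geq 0$ is genuinely used --- it is, both for irreflexivity and for the mutual exclusivity that makes the orientation single-valued.
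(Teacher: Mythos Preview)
Your proposal is correct and follows essentially the same route as the paper: both define the orientation ``$e_1 \to e_2$ when the right endpoint of $e_1$ is at least $p$ to the left of the left endpoint of $e_2$'' and verify transitivity directly. Your write-up is in fact a bit more careful than the paper's, since you explicitly check that the two disjuncts in the conflict condition are mutually exclusive (so the orientation is single-valued), a point the paper leaves implicit.
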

\begin{proof}
 Let $p \geq 0$ and $G$ be fixed.
 Two edges $e_1,e_2$ of $G$ are conflicting if the right endpoint of one edge, say $e_1$, lies at least $p$ positions left of the left endpoint of the other edge $e_2$.
 In this case we orient the edge $\{e_1,e_2\}$ in the conflict graph $M_p(G)$ from $e_1$ to $e_2$.
 Clearly, if $e_1,e_2$ are conflicting with $e_1$ being at least $p$ positions left of $e_2$ and $e_2,e_3$ are conflicting with $e_2$ being at least $p$ positions left of $e_3$, then also $e_1,e_3$ are conflicting with $e_1$ being at least $p$ positions left of $e_3$.
 Hence we have defined a transitive orientation of $M_p(G)$, proving that $M_p(G)$ is a comparability graph.
\end{proof}

For an edge $e = (u,v)$ in an ordered graph $G$ we say that the \emph{span of $e$} is the closed interval $[u,v] \subseteq \bZ$.

\begin{figure}[htb]
 \centering
 \includegraphics{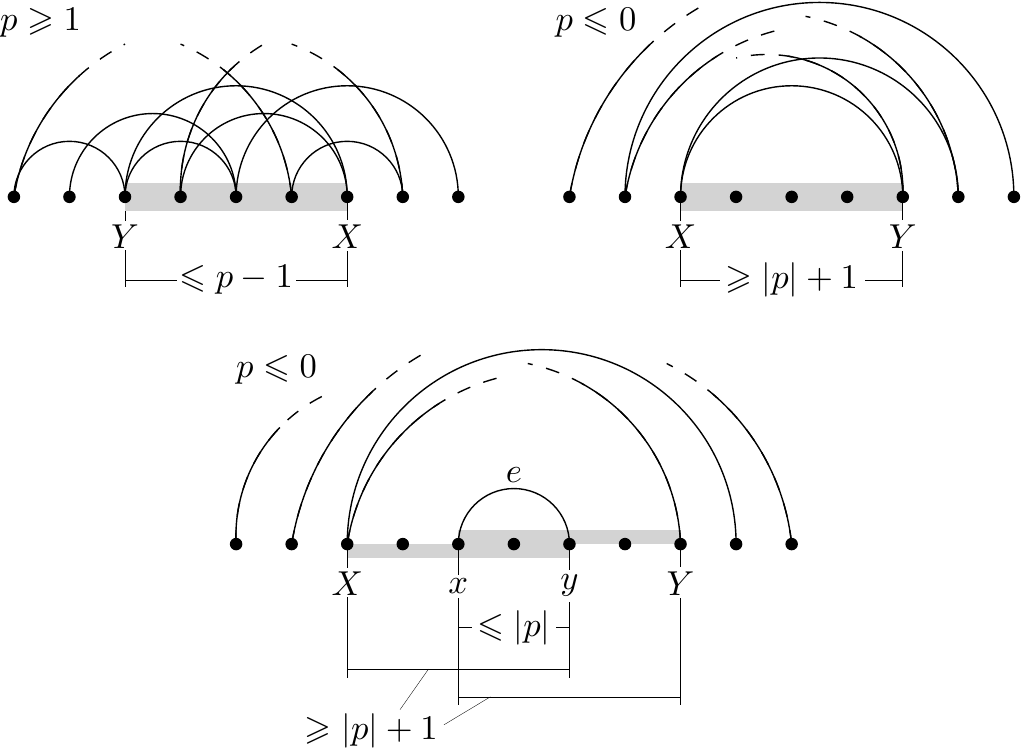}
 \caption{Illustration of independent sets of edges in $M_p(G)$ for $M = (+\>0\>0\>-)$ according to Lemma~\ref{lem:independent-sets}: condition~\ref{enum:p-geq-1} (top-left), condition~\ref{enum:p-leq-0-first} (top-right) and condition~\ref{enum:p-leq-0-second} (bottom).}
 \label{fig:left-of-independent-sets}
\end{figure}

\begin{lemma}\label{lem:independent-sets}
 Let $G$ be an ordered graph and $F \subseteq E(G)$ be a subset of edges.
 \begin{itemize}
  \item If $p \geq 1$, then $F$ is an independent set in $M_p(G)$ if and only if the following holds:
   \begin{enumerate}[label = (\roman*)]
    \item There exists a closed interval $[Y,X]$ of length at most $p-1$ intersecting the span of every edge in $F$, see the top-left of Figure~\ref{fig:left-of-independent-sets}.\label{enum:p-geq-1}
   \end{enumerate}
  \item If $p \leq 0$, then $F$ is an independent set in $M_p(G)$ if and only if one of the following holds:
   \begin{enumerate}[label = (\roman*), start = 2]
    \item There exists a closed interval $[X,Y]$ of length at least $|p|+1$ that is contained in the span of every edge in $F$, see the top-right of Figure~\ref{fig:left-of-independent-sets}.\label{enum:p-leq-0-first}
    \item There exists an edge $e = (x,y)$ in $F$ of length at most $|p|$ and a closed interval $[X,Y]$ with $X \leq y  + p - 1$ and $Y \geq x - p + 1$ that is contained in the span of every edge in $F - e$, see the bottom of Figure~\ref{fig:left-of-independent-sets}.\label{enum:p-leq-0-second}
   \end{enumerate}
 \end{itemize} 
\end{lemma}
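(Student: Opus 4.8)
The plan is to reduce the statement to elementary inequalities about edge endpoints. Since $M=\M{+}{0}{0}{-}$ gives $M(u_1,v_1,u_2,v_2)^\top=u_1-v_2$, two edges $e_1=(u_1,v_1)$ and $e_2=(u_2,v_2)$ are conflicting if and only if $u_1-v_2\geq p$ or $u_2-v_1\geq p$; equivalently, a set $F$ is independent in $M_p(G)$ if and only if $u_i-v_j\leq p-1$ for every ordered pair of distinct edges $e_i=(u_i,v_i),e_j=(u_j,v_j)\in F$. Throughout I write $U:=\max\{u:(u,v)\in F\}$ and $V:=\min\{v:(u,v)\in F\}$; these two numbers, and the question of whether a single edge attains both of them, will drive the whole argument. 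The degenerate cases $|F|\leq 1$ are checked directly.

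For the three ``if'' directions I would simply substitute the prescribed configuration into the independence criterion. If $[Y,X]$ has length at most $p-1$ and meets every span, then $u_i\leq X$ and $v_j\geq Y$ give $u_i-v_j\leq X-Y\leq p-1$, which is~\ref{enum:p-geq-1}. If $[X,Y]$ with $Y-X\geq |p|+1$ lies inside every span, then again $u_i-v_j\leq X-Y\leq p-1$, which is~\ref{enum:p-leq-0-first}. For~\ref{enum:p-leq-0-second} one checks the pairs inside $F-e$ exactly as before (here $Y-X\geq(x-p+1)-(y+p-1)=x-y-2p+2\geq |p|+2$ using $y-x\leq |p|$) and the mixed pairs $e,e_i$ using $v_i\geq Y\geq x-p+1$ and $u_i\leq X\leq y+p-1$. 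Each of these is a one-line computation.

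For the ``only if'' direction when $p\geq 1$, independence forces $U-V\leq p-1$ (this difference is either attained by some pair of distinct edges of $F$, to which the criterion applies directly, or else it equals some $u_i-v_i<0\leq p-1$), so the interval $[V,U]$ when $U\geq V$, or a single point of $[U,V]$ otherwise, is an interval of length at most $p-1$ meeting every span, giving~\ref{enum:p-geq-1}. The real work is the ``only if'' direction for $p\leq 0$. I would split on whether two \emph{distinct} edges of $F$ attain $U$ and $V$ respectively. If so, applying independence to that pair yields $U-V\leq p-1$, i.e.\ $V-U\geq |p|+1$, and $[U,V]$ sits inside every span, which is~\ref{enum:p-leq-0-first}. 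If not, a short argument (two edges with the same span coincide) shows there is a unique edge $e^\star=(x,y)$ attaining both $U$ and $V$; applying independence to $e^\star$ and each other edge $e_i\in F$ gives $v_i\geq x-p+1$ and $u_i\leq y+p-1$. Now if $y-x\geq |p|+1$, the interval $[x,x+|p|+1]$ lies in the span of $e^\star$ and, by those inequalities, in every other span, giving~\ref{enum:p-leq-0-first} once more; while if $y-x\leq |p|$, then $e^\star$ is short and I take $[X,Y]:=[\max_{e_i\in F-e^\star}u_i,\min_{e_i\in F-e^\star}v_i]$, which satisfies $X\leq y+p-1$ and $Y\geq x-p+1$ by the same inequalities, which is a genuine interval since $u_i>v_j$ with $i\neq j$ would contradict $u_i-v_j\leq p-1<0$, and which is contained in every span of $F-e^\star$, giving~\ref{enum:p-leq-0-second} with $e=e^\star$.

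I expect the main obstacle to be the bookkeeping in the $p\leq 0$ case, specifically recognizing that the naive ``common interval'' $[U,V]$ degenerates precisely when one edge is tight on both the left and the right, and that deleting that single edge restores a genuine common interval whose required horizontal extent is exactly what the deleted edge's endpoints dictate --- this is what forces the asymmetric-looking condition~\ref{enum:p-leq-0-second}. Establishing the uniqueness of $e^\star$ in the relevant subcase and dispatching the $|F|\leq 1$ boundary cases are the only other points that require care.
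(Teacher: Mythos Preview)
Your proof is correct and follows essentially the same approach as the paper: both arguments pivot on the rightmost left endpoint $U$ and leftmost right endpoint $V$ of edges in $F$, and reduce the problem to whether these two extremes are realized by a single edge or by two distinct edges. The paper organizes the case split slightly differently (it unifies~\ref{enum:p-geq-1} and~\ref{enum:p-leq-0-first} into a single condition and splits on whether $U-V\leq p-1$, deducing the single-edge situation from $U-V\geq p$ rather than assuming it), but the substance is the same.
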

\begin{proof}
 First note that condition~\ref{enum:p-geq-1} and~\ref{enum:p-leq-0-first} can be simultaneously rephrased as follows:
 \begin{enumerate}[label = (\roman*')]
  \item There are integers $X$ and $Y$, with $X-Y \leq p-1$, such that every edge in $F$ has left endpoint at most $X$ and right endpoint at least $Y$.\label{enum:first-kind}
 \end{enumerate}

 Assume that $F$ satisfies condition~\ref{enum:first-kind}.
 For any edge $e_1$ with left endpoint $u$, $u\leq X$, and any edge $e_2$ with right endpoint $v$, $v\geq Y$, we have $u\leq X \leq Y+p-1 \leq v+p-1$, and hence $e_1$ and $e_2$ are not conflicting.
 Now assume that $p\leq 0$ and $F$ satisfies condition~\ref{enum:p-leq-0-second}.
 As the interval $[X,Y]$ has length at least $|p|+2$ it follows from the previous argument that $F-e$ is an independent set.
 Moreover, for any edge $e' \in F-e$, $e' = (u,v)$, we have $u \leq X \leq y +p-1$ and $v \geq Y \geq x - p+1$, i.e., $e$ and $e'$ are not conflicting.
 It follows that $F$ is an independent set in $M_p(G)$.

 \medskip
 
 Now consider any independent set $F$ of $M_p(G)$.
 Let $x$ denote the rightmost left endpoint and $y$ the leftmost right endpoint of edges in $F$.
 First assume that $x-y\leq p-1$.
 Then $X=y+p-1 \geq x$ and $Y=y$ are integers with $X-Y=p-1$ such that every edge in $F$ has left endpoint at most $X$ and right endpoint at least $Y$, and hence $F$ satisfies condition~\ref{enum:first-kind}.
 Secondly, assume that $x-y\geq p$.
 Let $e_1\in F$ with left endpoint $x$ and $e_2\in F$ with right endpoint $y$.
 If $e_1\neq e_2$, then $e_1$ and $e_2$ are not in conflict and hence $x-y\leq p-1$, a contradiction.
 Therefore $e_1$ and $e_2$ are the same edge $e$.
 Hence for each edge $(u,v) \in F - e$ we have $x-v\leq p-1$ and $u-y \leq p-1$.
 With $X = y+p-1$ and $Y = x-p+1$ we have that every edge in $F - e$ has left endpoint at most $X$ and right endpoint at least $Y$, see the bottom of Figure~\ref{fig:left-of-independent-sets}.
 Finally observe that $e$ has length $y-x\leq -p$ and thus this case can happen only if $p\leq -1$.
 In particular, $F$ and $p$ satisfy condition~\ref{enum:p-leq-0-second}.
\end{proof}

The following concept was introduced by Dujmovi\'{c} and Wood~\cite{DW04} for $p=1$.
For integers $p$, $t$ with $p \geq 0$ and $t \geq 1$, an unordered graph $F = (V,E)$ is called \emph{$p$-almost $t$-colorable} if there exists a set $S \subseteq V$ of at most $p(t-1)$ vertices, such that $\chi(F - S) \leq t$.
The following result was proven by Dujmovi\'{c} and Wood~\cite{DW04} in the special case of $p=1$.
Here we prove it in general.
Recall that $G^\star$ is the underlying unordered graph of a given ordered graph $G$

\begin{lemma}\label{lem:a-almost-t-colorable}
 Let $M = \M{+}{0}{0}{-}$.
 For any $p \geq 0$ and any graph $F$ we have that
 \[
  \min \{\omega(M_p(G)) \mid G^\star = F\} = \min \{t \mid F \text{ is $p$-almost $(t+1)$-colorable}\}.
 \]
\end{lemma}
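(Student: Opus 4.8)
\emph{Proof plan.}
The plan is to prove the two inequalities between the sides separately, writing $t^\star=t^\star(F)$ for the right-hand side; note this minimum is attained (taking $S=\emptyset$ gives $t^\star\le\chi(F)-1<\infty$). The tool for both directions is the following description of independent sets, which uses $p\ge 0$: a set $F'\subseteq E(G)$ is independent in $M_p(G)$ if and only if there is an integer interval $[Y,X]$ with $X-Y\le p-1$ such that every edge of $F'$ has left endpoint $\le X$ and right endpoint $\ge Y$. The ``if'' part is immediate, since $u_1-v_2\le X-Y\le p-1<p$ for any two such edges $(u_1,v_1),(u_2,v_2)$; the ``only if'' part follows by taking $X$ to be the largest left endpoint and $Y$ the smallest right endpoint occurring in $F'$, and observing that the (not necessarily distinct) edges attaining these are non-conflicting, which forces $X-Y\le p-1$ precisely because $p\ge 0$ — alternatively this is Lemma~\ref{lem:independent-sets}~\ref{enum:p-geq-1} for $p\ge 1$ plus a direct check for $p=0$. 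Write $\mathcal E[Y,X]$ for this family of edges; the key feature is that $[Y,X]$ contains at most $p$ integers.

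For $\min_G\omega(M_p(G))\le t^\star$, assume $F$ is $p$-almost $(t^\star+1)$-colorable, witnessed by $S\subseteq V(F)$ with $|S|\le p\,t^\star$ and a proper coloring of $F-S$ with classes $V_1,\dots,V_{t^\star+1}$. Split $S$ into sets $S_1,\dots,S_{t^\star}$ of size at most $p$ each (possible since $|S|\le p\,t^\star$), and form the ordered graph $G$ by placing, left to right at consecutive integer positions, first $V_1$, then $S_1$, then $V_2$, then $S_2$, \dots, then $V_{t^\star}$, then $S_{t^\star}$, then $V_{t^\star+1}$, with internal order arbitrary. For $i\in\{1,\dots,t^\star\}$ let $[Y_i,X_i]$ be the positions occupied by $S_i$, so $X_i-Y_i=|S_i|-1\le p-1$ (with $X_i=Y_i-1$ if $S_i=\emptyset$). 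One checks directly that every edge $e$ of $G$ lies in some $\mathcal E[Y_i,X_i]$: if $e$ joins $V_a$ and $V_b$ with $a<b$ (forced by properness), then $e\in\mathcal E[Y_a,X_a]$; and if $e$ has an endpoint in a slot $S_l$, then $e\in\mathcal E[Y_l,X_l]$. Coloring each edge by such an index partitions $E(G)$ into at most $t^\star$ independent sets of $M_p(G)$, so $\omega(M_p(G))\le\chi(M_p(G))\le t^\star$.

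For $\min_G\omega(M_p(G))\ge t^\star$, let $G$ be any ordered graph with $G^\star=F$ and put $t:=\omega(M_p(G))$. By Lemma~\ref{lem:left-of-is-poset}, $M_p(G)$ is a comparability graph, hence perfect, so $\chi(M_p(G))=t$ and $E(G)$ partitions into independent sets $E_1,\dots,E_t$. For each nonempty $E_i$ pick $[Y_i,X_i]$ as above with $E_i\subseteq\mathcal E[Y_i,X_i]$ and $X_i-Y_i\le p-1$, put $P_i:=[Y_i,X_i]\cap\bZ$ (so $|P_i|\le p$), and set $S:=V(G)\cap(P_1\cup\dots\cup P_t)$, so $|S|\le p\,t$. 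List the vertices of $G-S$ left to right as $w_1<\dots<w_N$. Since no $w_m$ lies in any $P_i$, for each $i$ there is a threshold $\tau_i$ with $\{w_1,\dots,w_{\tau_i}\}$ strictly left of $P_i$ and $\{w_{\tau_i+1},\dots,w_N\}$ strictly right of $P_i$; and every surviving edge of $E_i$ has left endpoint among $w_1,\dots,w_{\tau_i}$ and right endpoint among $w_{\tau_i+1},\dots,w_N$, because $E_i\subseteq\mathcal E[Y_i,X_i]$ and both endpoints avoid $P_i$. Now color $w_m$ by $c(w_m):=|\{i:\tau_i<m\}|\in\{0,1,\dots,t\}$. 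For an edge $w_aw_b\in E_i$ with $a\le\tau_i<b$ we have $i\in\{j:\tau_j<b\}\setminus\{j:\tau_j<a\}$, hence $c(w_b)>c(w_a)$; so $c$ is a proper $(t+1)$-coloring of $F-S$, and $F$ is $p$-almost $(t+1)$-colorable, giving $t^\star\le t=\omega(M_p(G))$.

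Combining the two bounds shows that the left-hand minimum is attained and equals $t^\star$. I expect the combinatorial core of the third paragraph to be the main obstacle: one must recognize that the $t$ ``separating intervals'' $P_i$ extracted from an arbitrary coloring of $M_p(G)$ become, after deleting the few vertices lying inside them, $t$ nested prefix/suffix cuts of the sorted remaining vertices, which is exactly what lets them be merged into a single coloring using only $t+1$ classes instead of $2^t$. The case analysis in the construction (checking that every edge, including those incident to $S$, lands in some $\mathcal E[Y_i,X_i]$ of the correct index range) is routine but must be carried out with care about the degenerate empty-block cases.
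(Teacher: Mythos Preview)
Your proof is correct and follows essentially the same route as the paper's: both directions use the same construction (laying out the color classes interleaved with the pieces $S_i$ of $S$ for the $\le$ direction) and the same tool (perfectness of $M_p(G)$ from Lemma~\ref{lem:left-of-is-poset} together with the interval description of independent sets for the $\ge$ direction). Your threshold-count coloring $c(w_m)=|\{i:\tau_i<m\}|$ is just a clean reformulation of the paper's ``color the remaining vertices by which gap between the intervals $I_i$ they fall into''; the only cosmetic wrinkle is that when $P_i=\emptyset$ (in particular whenever $p=0$) the phrase ``strictly left/right of $P_i$'' should be read as ``$\le X_i$'' versus ``$>X_i$'', which is clearly what you intend.
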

\begin{proof}
 First assume that $F$ is $p$-almost $(t+1)$-colorable.
 We will find an ordered graph $G$ with $G^\star=F$, i.e., an embedding of $V(F)$ into $\bZ$, such that $\omega(M_p(G))\leq t$.
 There is a set $S$ of at most $pt$ vertices in $F$ such that $\chi(F-S)\leq t+1$.
 Let $C_1,\ldots,C_{t+1}$ denote the color classes of a proper coloring of $F-S$ and let $S=S_1\dot\cup\cdots\dot\cup S_t$ denote a partition of $S$ with disjoint sets $S_i$ of size at most $p$ each.
%
 Set $a_0 = 0$, $a_i = |C_i \cup S_i|$ for $1\leq i\leq t$, and $a_{t+1} = |C_{t+1}|$.
 For each $i$, $1\leq i\leq t+1$, consider the interval $I_i = [a_{i-1}+1,a_{i-1}+a_i] \subset\bZ$.
 We form an ordered graph $G$ with $G^\star = F$ by bijectively mapping $C_i$ into the first $|C_i|$ vertices in $I_i$ and bijectively mapping $S_i$ into the remaining vertices in $I_i$, $1\leq i\leq t$, and mapping the vertices in $C_{t+1}$ bijectively into $I_{t+1}$.
 Observe that for two conflicting edges the right endpoint of one edge is left of the left endpoint of the other edge.
 Moreover an edge that has both endpoints in $I_i$ has its right endpoint in $S_i$, as $C_i$ is an independent set.
 Hence two edges having left endpoints in $I_i$ are not in conflict, since either $p=0$ and $S_i=\emptyset$, or the distance between the copies of any two vertices from $S_i$ in $G$ is at most $p-1$, $1\leq i\leq t$.
 Moreover no edge has left endpoint in $I_{t+1}$.
 Therefore a maximum clique in $M_p(G)$ has at most $t$ vertices.
 It follows that $\omega(M_p(G)) \leq t$, as desired.
 
 \medskip

 Now assume that $G$ is an ordered graph with $G^\star = F$ and $\omega(M_p(G)) = t$.
 We shall show that $F$ is $p$-almost $(t+1)$-colorable.
 By Lemma~\ref{lem:left-of-is-poset} the vertices of $M_p(G)$ can be split into $t = \omega(M_p(G))$ independent sets $E_1,\ldots,E_t$.
 If $p \geq 1$, by Lemma~\ref{lem:independent-sets}~\ref{enum:p-geq-1} there is a closed interval $I_i\subset\bZ$ of length at most $p-1$ that intersects the span of each edge in $E_i$, $i=1,\ldots,t$.
 If $p=0$, by Lemma~\ref{lem:independent-sets}~\ref{enum:p-leq-0-first} there is a closed interval $I'_i\subset\bZ$ of length at least $1$ that is contained in the span of each edge in $E_i$, $i=1,\ldots,t$.
 Therefore we can choose a closed interval $I_i\subset(I'_i\setminus\bZ)$ (of length $<1$) that intersects the span of each edge in $E_i$, $i=1,\ldots,t$.
 
 \begin{figure}[htb]
  \centering
  \includegraphics{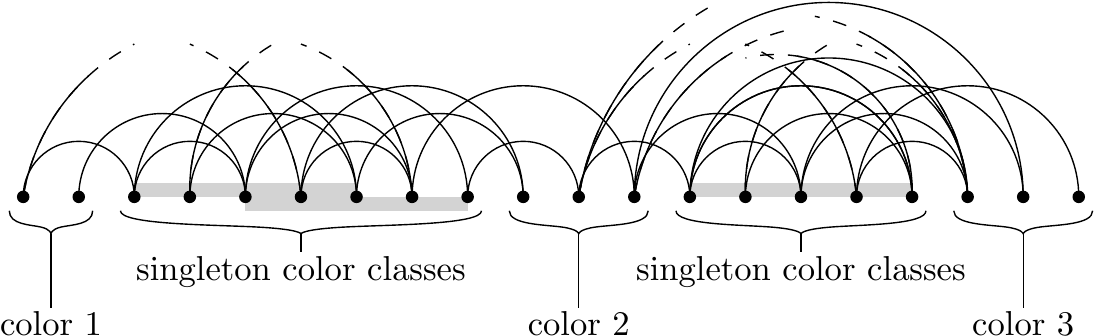}
  \caption{A $p$-almost $(t+1)$-coloring for $p = 5$ and $t = 3$.}
  \label{fig:p-almost-coloring}
 \end{figure}

 We define a coloring $c$ of $G$ as follows.
 See Figure~\ref{fig:p-almost-coloring} for an illustration.
 Every vertex that is contained in some $I_i$, $i=1,\ldots,t$, defines a singleton color class.
 Note that there are at most $pt$ such vertices (if $p=0$ the intervals $I_i$ contain no vertices).
 The remaining vertices of $G$ are split by the intervals $I_i$ into at most $t+1$ consecutive sets of integers and we color all vertices in such a set in the same color, using at most $t+1$ further colors.
 
 The coloring $c$ is a proper coloring of $G$, since a monochromatic edge $e$ would have a span that is disjoint from all intervals $I_1,\ldots,I_t$ and hence $e$ would not be contained in any of $E_1,\ldots,E_t$, a contradiction.
 Hence $G$ (and thus $F$) is $p$-almost $(t+1)$-colorable, as desired.
\end{proof}

Having Lemma~\ref{lem:left-of-is-poset} and~\ref{lem:a-almost-t-colorable}, we are now ready to determine $A(M,p,k)$ and $W(M,p,k)$ for $M = \M{+}{0}{0}{-}$.

\begin{proposition}[edges at distance at least $p$, row $10$ in Table~\ref{tab:detailed}]\label{P00M}
 ~\\
 Let $M \in \{\M{+}{0}{0}{-}, \M{0}{-}{+}{0}\}$.
 \begin{compactitem}[\enskip]
  \item If $p \geq 1$, then $A(M,p,k) = \floor{\frac{(k+1)^2}{4}}-1$ and $W(M,p,k) = \ceil{\frac{k-1}{p+1}}$ for all $k \geq 2$.
 
  \item If $p \leq 0$, then $A(M,p,k) = 1$ for $2\leq k \leq |p|+1$ and $A(M,p,k) = \floor{\frac{(k-|p|)^2}{4}}$ for $k > |p|+1$ and $W(M,p,k) = k-1$ for all $k \geq 2$.
 \end{compactitem}
\end{proposition}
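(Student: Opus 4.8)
\emph{Interpretation and easy reductions.} For $M=\M{+}{0}{0}{-}$ two edges $e_1,e_2$ conflict exactly when the right endpoint of one lies at least $p$ to the left of the left endpoint of the other; by Lemma~\ref{lem:left-of-is-poset} the graph $M_p(G)$ is a comparability graph whenever $p\ge 0$, and Lemma~\ref{lem:independent-sets} describes its independent sets. I would also use the elementary fact that the conflict relation only shrinks as $p$ grows, so $M_{p'}(G)\subseteq M_{p}(G)$ for $p\le p'$; in particular $M_0(G)\subseteq M_p(G)$ for $p\le 0$ and $M_p(G)\subseteq M_1(G)$ for $p\ge 1$. Since passing to a $k$-critical subgraph $G'\subseteq G$ only deletes vertices from the conflict graph while preserving conflicts among the remaining edges, it suffices throughout to bound $\alpha$ from below (and $\omega$ from above) for $K_k$ on the one hand and for $k$-critical graphs on the other; as all the extremal examples below are embeddings of $K_k$, the clause "$A(M,p,k)=\alpha(M_p(K_k))$ for some $K_k$'' (and its $W$-analogue) will come for free.

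\emph{The $W$-side.} For $p\ge 0$, Lemma~\ref{lem:a-almost-t-colorable} says $W(M,p,k)$ is the least $t$ for which some graph of chromatic number $\ge k$ is $p$-almost $(t+1)$-colorable. Deleting at most $pt$ vertices and properly $(t+1)$-coloring the rest uses at most $(p+1)t+1$ colors in total, so such a graph forces $t\ge\ceil{\tfrac{k-1}{p+1}}$; and $K_k$ with $k-t-1$ of its vertices deleted realizes this, giving $W(M,p,k)=\ceil{\tfrac{k-1}{p+1}}$ for $p\ge 1$ and $W(M,0,k)=k-1$. For $p\le 0$, the containment $M_0(G)\subseteq M_p(G)$ gives $\omega(M_p(G))\ge\omega(M_0(G))\ge W(M,0,k)=k-1$, while embedding $K_k$ with consecutive vertices at distance $1-p$ makes two edges conflict precisely when their vertex-index intervals are disjoint (touching allowed), so the largest clique is a chain of $k-1$ "consecutive'' edges; hence $W(M,p,k)=k-1$.

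\emph{The $A$-side, upper bounds.} For $p\ge 1$, place the $k$ vertices of $K_k$ at $0,N,2N,\dots$ with $N$ large. By Lemma~\ref{lem:independent-sets}~\ref{enum:p-geq-1} the interval of length $p-1$ can reach at most one vertex, so an independent set is just a family of edges whose vertex-index intervals share a common integer; the largest such family has $\max_x x(k-x+1)-1=\floor{\tfrac{(k+1)^2}{4}}-1$ edges. For $p\le 0$ with $q=-p$, take $K_k$ on $[k]$: if $k\le q+1$ no two edges are non-conflicting, so $\alpha(M_p(K_k))=1$; if $k>q+1$, then by Lemma~\ref{lem:independent-sets}~\ref{enum:p-leq-0-first}--\ref{enum:p-leq-0-second} an independent set is a family of edges whose spans contain a common interval of length $q+1$ (a direct comparison of the two resulting counts shows the one extra short edge permitted by~\ref{enum:p-leq-0-second} never enlarges the family here), so its maximum size is $\max_a a(k-a-q)=\floor{\tfrac{(k-q)^2}{4}}$.

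\emph{The $A$-side, lower bounds — the main obstacle.} Via $M_p(G)\subseteq M_1(G)$ it is enough to show that if $\chi(G)\ge k$ then some integer point lies in at least $\floor{\tfrac{(k+1)^2}{4}}-1$ edge-spans of $G$ (for the $p\ge 1$ range), and, for $p=-q\le 0$ and $k>q+1$, that some interval of length $q+1$ lies in at least $\floor{\tfrac{(k-q)^2}{4}}$ edge-spans; Lemma~\ref{lem:independent-sets} then converts this point (resp.\ interval) into the required independent set. Passing to a $k$-critical subgraph $G'$ (minimum degree $\ge k-1$, $(k-1)$-edge-connected) with vertices $v_1<\dots<v_n$, the remaining task is purely combinatorial. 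I would induct on $k$: deleting the leftmost vertex $v_1$ leaves a graph of chromatic number $k-1$, hence a $(k-1)$-critical subgraph $H$ to which the inductive bound applies, yielding a point $X$ contained in many spans of edges of $H$; one then adds back the $\ge k-1$ edges at $v_1$ whose span still contains $X$ and checks that at least $\floor{\tfrac{(k+1)^2}{4}}-\floor{\tfrac{k^2}{4}}=\ceil{\tfrac{k}{2}}$ of them survive (with the analogous, $q$-shifted, count for $p\le 0$). The delicate point, and where I expect the real work to be, is that the point $X$ handed over by the induction need not be centrally placed, so this naive count of the edges at $v_1$ crossing $X$ can fall short; making it go through seems to require either choosing $H$ (equivalently the point $X$) so that the neighborhood of the extreme vertex is suitably split by $X$ — possibly peeling off $v_1$ or $v_n$ according to which side is denser, or peeling off both at once — or strengthening the inductive hypothesis to control the location of the extremal point. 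For $k<|p|+2$ the values are the small ones already recorded in Theorem~\ref{thm:generalMatrices} and in the statement.
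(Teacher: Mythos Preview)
Your $W$-side and your $A$-side upper bounds are fine and essentially match the paper. The genuine gap is the $A$-side lower bound: you propose induction on $k$ by peeling off an extreme vertex and passing to a $(k-1)$-critical subgraph, but as you yourself observe, the point $X$ inherited from the inductive step need not be placed so that $\ceil{k/2}$ of the new edges cross it, and you do not supply the fix. This is not a minor technicality --- the proposed scheme as stated does not close.

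The paper avoids induction entirely with a direct counting argument in a $k$-critical subgraph $G'$ (minimum degree $\ge k-1$). For $p\ge 1$: let $S$ be the $\ceil{(k+1)/2}$ leftmost vertices of $G'$ and $v$ the rightmost vertex of $S$. Each vertex of $S\setminus\{v\}$ sends at least $k-1-(\ceil{(k+1)/2}-1)=\floor{(k+1)/2}-1$ edges strictly to the right of $v$, and $v$ itself has $\ge k-1$ incident edges; every edge so obtained has $v$ in its span, hence by Lemma~\ref{lem:independent-sets}\ref{enum:p-geq-1} with $X=Y=v$ they form an independent set of size
\[
 \bigl(\ceil{\tfrac{k+1}{2}}-1\bigr)\bigl(\floor{\tfrac{k+1}{2}}-1\bigr)+(k-1)=\floor{\tfrac{(k+1)^2}{4}}-1.
\]
For $p\le 0$ and $k>|p|+1$: set $i=\ceil{(k+p)/2}$, $j=\ceil{(k-p)/2}+1$, let $X$ and $Y$ be the $i$-th and $j$-th vertices of $G'$ from the left; then $X-Y\le p-1$, so by Lemma~\ref{lem:independent-sets}\ref{enum:p-leq-0-first} the set of edges with left endpoint $\le X$ and right endpoint $\ge Y$ is independent, and min-degree $\ge k-1$ gives at least $i(k-j+1)=\floor{(k+p)^2/4}$ such edges. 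The point is simply to \emph{choose the pivot vertex (or pair of pivots) at the right position in advance}, rather than hope an inductively produced one is well placed.
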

\begin{proof}
 Consider $M=\M{+}{0}{0}{-}$. 
 
 Two edges $e_1 = (u_1,v_1)$, $e_2 = (u_2,v_2)$ are in conflict if and only if $u_1-p\geq v_2$ or $u_2-p\geq v_1$.
 If $p \geq 1$, then the spans of conflicting edges are disjoint and at least $p$ positions apart, see Figure~\ref{fig:Example-Matrices} top right.
 In particular, a clique in $M_p(G)$ is a set of edges with pairwise disjoint spans at distance at least $p$.
 In case $p=0$, spans of conflicting edges are only interiorly disjoint, i.e., they intersect in at most one point.
 If $p < 0$, then two edges are conflicting if their spans become disjoint after one edge is shifted $|p|+1$ positions to the right. 
 Note that if $e_1$ and $e_2$ both have length at most $|p|-1$, then this might hold no matter which edge is shifted.
 Here a clique in $M_p(G)$ is a set of edges in which any pair of edges has disjoint spans after shifting one edge $|p|+1$ positions right.

 \medskip
 
 Consider $A(M,p,k)$ for $p \geq 1$.
 Consider $G = K_k$ with vertex set $V = \{ip \mid i\in[k]\}$.
 Then for any two edges $e_1,e_2 \in E$ with $e_1 = (u_1,v_1)$ and $e_2 = (u_2,v_2)$ we have
 \[
   u_1 > v_2 \quad \Rightarrow \quad u_1 - p \geq v_2.
 \]
 Hence, by Lemma~\ref{lem:independent-sets}~\ref{enum:p-geq-1}, every independent set in $M_p(G)$ is an edge set in $G$ with pairwise intersecting spans.
 For $i=1,\ldots,k$, the number of edges in $G$ whose span contains the vertex $ip \in V$ is given by
 \begin{align*}
  \binom{k}{2} - \binom{i-1}{2} - \binom{k-i}{2} &= \frac{k(k-1) - (i-1)(i-2) - (k-i)(k-i-1)}{2}\\
  &= \frac{-2i^2 + 2i - 2 + 2ik}{2} = i(k+1-i) -1.
 \end{align*}
 Note that for $n \in \{0,1\}$ we have $\binom{n}{2} = n(n-1)/2 = 0$.
 As this is maximized for $i = \ceil{(k+1)/2}$, we conclude that $A(M,p,k) \leq \alpha(M_p(G)) = \ceil{(k+1)/2}\floor{(k+1)/2}-1 = \floor{(k+1)^2/4}-1$.

 \medskip
 
 Now consider any ordered graph $G$ with $\chi(G) = k$.
 Let $G'$ denote a $k$-critical subgraph of $G$.
 Then $G'$ has minimum degree at least $k-1$.
 Then $M_p(G')$ is an induced subgraph of $M_p(G)$ and hence $\alpha(M_p(G))\geq \alpha(M_p(G'))$.
 Consider the set $S$ of the $\ceil{(k+1)/2}$ leftmost vertices in $G'$ and let $v$ be the rightmost vertex in $S$.
 Every vertex in $S \setminus v$ has at least $k-1 - (\ceil{(k+1)/2}-1) = \floor{(k+1)/2}-1$ edges to the right of $S$.
 Moreover, vertex $v$ has at least $k-1$ incident edges.
 In total this is a set $I$ of at least
 \[
  \left(\ceil{\frac{k+1}{2}}-1\right)\left(\floor{\frac{k+1}{2}}-1\right) + k-1 = \floor{\frac{(k+1)^2}{4}}-1
 \]
 edges in $G'$.
 Choosing $X = Y = v$ (as $p \geq 1$ we have $X - Y \leq p-1$) shows that $I$ is an independent set by Lemma~\ref{lem:independent-sets}~\ref{enum:p-geq-1}.
 Thus $\alpha(M_p(G)) \geq \alpha(M_p(G')) \geq |I| \geq \floor{(k+1)^2/4}-1$.
 As $G$ was arbitrary, we get $A(M,p,k) \geq \floor{(k+1)^2/4}-1$.
 

 \medskip

 Next consider $W(M,p,k)$ for $p \geq 0$.
 Consider $G = K_k$ on vertex set $[k]$.
 Recall that a clique in $M_p(G)$ is a set of edges in $G$ that are pairwise at least $p$ positions apart of each other.
 Thus a largest clique $C$ in $M_p(G)$ is formed by taking every $(p+1)^{\text{th}}$ edge of length $1$ in $G$.
 As there are $k-1$ edges of length $1$ in total, it follows that $W(M,p,k) \leq \omega(M_p(G)) = \ceil{(k-1)/(p+1)}$, as desired.
 
 \medskip
 
 Now fix $G$ to be any ordered graph with $\chi(G) = k$.
 Then by Lemma~\ref{lem:a-almost-t-colorable} we have that $G$ is $p$-almost $(\omega(M_p(G))+1)$-colorable.
 In particular, $k = \chi(G) \leq (p+1)\omega(M_p(G)) + 1$, which gives $\omega(M_p(G)) \geq \ceil{(k-1)/(p+1)}$.
 As $G$ was arbitrary we get $W(M,p,k) \geq \ceil{(k-1)/(p+1)}$.

 \medskip 

 Now consider $A(M,p,k)$ for $p \leq 0$.
 Recall that each independent set is of one of two kinds due to Lemma~\ref{lem:independent-sets}~\ref{enum:p-leq-0-first} and~\ref{enum:p-leq-0-second}.
 Consider $G = K_k$ on vertex set $[k]$.
 If $k \leq |p|+1$, then every edge has length at most $|p|$.
 As every pair of non-conflicting edges has an edge of length at least $|p|+1$ (c.f.\ Figure~\ref{fig:left-of-independent-sets}), we have in this case that $A(M,p,k) = \alpha(M_p(G)) = 1$.
 If $k > |p|+1$, then consider for each $X = 1,\ldots,k+p$ all edges with left endpoint at most $X$ and right endpoint at least $Y=X-p+1$.
 As $p \leq 0$ we have $X < Y$, see the top-right of Figure~\ref{fig:left-of-independent-sets}.
 There are exactly $X(k-(Y-1)) = X(k+p-X)$ such edges and this term is maximized for $X = \ceil{(k+p)/2}$.
 Hence any independent set of the first kind contains at most $\floor{(k+p)^2/4}$ elements.
 Finally, it is easy to see that, since $k>|p|+1$, for any independent set of the second kind one can replace the short edge by some edge of length $|p|+1$ to obtain an independent set of the first kind that has the same number of edges.
 Together we have that $A(M,p,k) \leq \alpha(M_p(G)) = \floor{(k+p)^2/4}$, as desired.

 \medskip
 
 Now consider any ordered graph $G$ with $\chi(G) = k$.
 Let $G'$ denote a $k$-critical subgraph of $G$.
 Then $G'$ has minimum degree at least $k-1$.
 Then $M_p(G')$ is an induced subgraph of $M_p(G)$ and hence $\alpha(M_p(G))\geq \alpha(M_p(G'))$.
 Consider $i = \ceil{(k+p)/2}$ and $j = \ceil{(k-p)/2}+1$, and let $X$ and $Y$ be the integers corresponding to the $i$-th and $j$-th vertex in $G'$ counted from the left, respectively.
 Then $X-Y\leq \ceil{(k+p)/2}-\ceil{(k-p)/2}-1=p-1$.
 The set $I$ of all edges in $G'$ with left endpoint at most $X$ and right endpoint at least $Y$ is an independent set of the first kind by Lemma~\ref{lem:independent-sets}~\ref{enum:p-leq-0-first}.
 As $p \leq 0$, we have $i < j$ and hence $I$ consists of at least $i(k-1-(j-2)) = i(k-j+1)$ edges, since $\delta(G')\geq k-1$.
 Thus $\alpha(M_p(G)) \geq \alpha(M_p(G')) \geq |I| \geq i(k-j+1) = \floor{(k+p)^2/4}$, and as $G$ was arbitrary we get $A(M,p,k) \geq \floor{(k+p)^2/4}$.

 \medskip

 Finally consider $W(M,p,k)$ for $p \leq -1$.
 Consider $G = K_k$ on vertex set $V = \{i|p| \mid i=1,\ldots,k\}$.
 Then, for any two edges $e_1,e_2 \in E(G)$ with $e_1 = (u_1,v_1)$ and $e_2 = (u_2,v_2)$ we have
 \[
  u_1 - p \geq v_2 \quad \Leftrightarrow \quad u_1 \geq v_2.
 \]
 In particular, for $G' = K_k$ with vertex set $[k]$ we have that $M_p(G)$ is isomorphic to $M_0(G')$ and thus due to the arguments above we get $W(M,p,k) \leq \omega(M_p(G))  = \omega(M_0(G')) = k-1$.
 
 \medskip
 
 Now fix $G = (V,E)$ to be any ordered graph with $\chi(G) = k$.
 As $p < 0$ we clearly have for any two edges $e_1,e_2 \in E$ with $e_1 = (u_1,v_1)$ and $e_2 = (u_2,v_2)$ that
 \[
   u_1 \geq v_2 \quad \Rightarrow \quad u_1 - p \geq v_2.
 \]
 In particular, $\omega(M_p(G)) \geq \omega(M_0(G))$ and we get $\omega(M_p(G)) \geq \omega(M_0(G)) \geq k-1$ as before.
 As $G$ was arbitrary this gives $W(M,p,k) \geq k-1$.
 
 \medskip 

 Finally, if $M' = \M{0}{-}{+}{0}$, then $M'$ is obtained from $M=\M{+}{0}{0}{-}$ by switching the first with the third and the second with the last column.
 Thus $W(M',p,k)=W(M,p,k)$ and $A(M',p,k)=A(M,p,k)$ due to Observation~\ref{obs:swapping-columns}.
\end{proof}

\begin{proposition}[right end of one edge at least $p$ positions to the right of the left end of the other edge, row $11$ in Table~\ref{tab:detailed}]\label{M00P}
 ~\\
 Let $M \in \{\M{-}{0}{0}{+}, \M{0}{+}{-}{0}\}$.
 \begin{compactitem}[\enskip]
  \item If $p \leq 1$, then $A(M,p,k) = 1$ and $W(M,p,k) = \binom{k}{2}$ for all $k \geq 2$.
 
  \item If $p \geq 2$, then $A(M,p,k) = 1$ for all $k \geq 2$ and $W(M,p,k) = 1$ if $2\leq k\leq p$ and $W(M,p,k) =\binom{k-p+2}{2}$ if $k\geq p+1$.
 \end{compactitem}
\end{proposition}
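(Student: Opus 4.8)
The plan is to reduce everything to computing $W(M,p,k)$ for $p\geq 2$. The matrix $M=\M{-}{0}{0}{+}$ is translation invariant with $m_2+m_4=1$, so Theorem~\ref{thm:generalMatrices}~\ref{enum:m2m40} yields $A(M,p,k)=1$ and $W(M,p,k)=\binom{k}{2}$ when $p\leq 1$, and Theorem~\ref{thm:generalMatrices}~\ref{enum:m2m4greater0} yields $A(M,p,k)=1$ for every $p$; in each case the value is attained by an appropriately placed $K_k$. So from now on assume $p\geq 2$. For $M=\M{-}{0}{0}{+}$ two edges $(u_1,v_1)$ and $(u_2,v_2)$ are conflicting precisely when $v_2-u_1\geq p$ or $v_1-u_2\geq p$, that is, the right endpoint of one of them lies at least $p$ positions to the right of the left endpoint of the other. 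Call an edge \emph{long} if it has length at least $p-1$, and \emph{short} otherwise.

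The first step is the observation that any two distinct long edges conflict. Relabelling so that $u_1\leq u_2$, and additionally $v_1<v_2$ if $u_1=u_2$, we get $v_2-u_1\geq (v_2-u_2)+1\geq p$ if $u_1<u_2$, and $v_2-u_1=v_2-u_2>v_1-u_1\geq p-1$ if $u_1=u_2$; either way the two edges conflict. Hence any set of long edges is a clique in $M_p(G)$. Combined with Lemma~\ref{lem:LongEdges} (applied with $q=p-1$) this settles the lower bounds: if $\chi(G)\geq k$ and $k\geq p+1$, then $G$ has a set of $\binom{k-p+2}{2}$ long edges, so $\omega(M_p(G))\geq\binom{k-p+2}{2}$ and hence $W(M,p,k)\geq\binom{k-p+2}{2}$; for $2\leq k\leq p$ the trivial bound $W(M,p,k)\geq 1$ is enough.

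For the matching upper bounds I place $K_k$ on the vertex set $[k]$. If $k\leq p$, then for any two edges all four quantities $v_i-u_j$ are at most $k-1<p$, so $M_p(K_k)$ is edgeless and $W(M,p,k)\leq\omega(M_p(K_k))=1$. If $k\geq p+1$, then the long edges of $K_k$ form a clique, and there are $\sum_{\ell=p-1}^{k-1}(k-\ell)=\binom{k-p+2}{2}$ of them, so $\omega(M_p(K_k))\geq\binom{k-p+2}{2}$; the crux is to show that no clique in $M_p(K_k)$ is larger. For this I would transform an arbitrary clique $F$ into a clique of the same cardinality that uses only long edges. While $F$ contains a short edge $e=(u,v)$, choose integers $t,s\geq 0$ with $t+s=(p-1)-(v-u)$, $t\leq u-1$ and $s\leq k-v$ — possible because $(u-1)+(k-v)\geq(p-1)-(v-u)$ when $k\geq p$ — and replace $e$ by $\hat e=(u-t,\,v+s)$, an edge of $K_k$ of length exactly $p-1$. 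Since $u-t\leq u$ and $v+s\geq v$, every conflict of $e$ with an edge of $F\setminus\{e\}$ is inherited by $\hat e$; on the other hand $\max\{(v+s)-u,\,v-(u-t)\}=(v-u)+\max\{s,t\}\leq(v-u)+(s+t)=p-1<p$, so $\hat e$ does not conflict with $e$ and therefore $\hat e\notin F$. Thus $(F\setminus\{e\})\cup\{\hat e\}$ is again a clique, of the same size, with one fewer short edge. Iterating produces a clique consisting of long edges of size $|F|$, whence $|F|\leq\binom{k-p+2}{2}$.

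Combining the two directions gives $W(M,p,k)=1$ for $2\leq k\leq p$ and $W(M,p,k)=\binom{k-p+2}{2}$ for $k\geq p+1$, in each case attained by $K_k$ on $[k]$. Finally, $\M{0}{+}{-}{0}$ is obtained from $\M{-}{0}{0}{+}$ by swapping the first column with the third and the second with the fourth, so by Observation~\ref{obs:swapping-columns} it has the same values of $A$ and $W$. The one genuinely non-trivial part is the upper bound $\omega(M_p(K_k))\leq\binom{k-p+2}{2}$, and its proof rests on the two short calculations above: that a short edge always has a length-$(p-1)$ enlargement that is still an edge of $K_k$, and that this enlargement is never in conflict with the edge it replaces, hence not already present in the clique.
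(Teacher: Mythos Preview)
Your proof is correct and follows essentially the same route as the paper's: both reduce the trivial cases via Theorem~\ref{thm:generalMatrices}~\ref{enum:m2m40} and~\ref{enum:m2m4greater0}, use Lemma~\ref{lem:LongEdges} with $q=p-1$ for the lower bound, and for the upper bound show that a maximum clique in $M_p(K_k)$ on $[k]$ consists of the long edges by replacing each short edge in a clique with a longer one it does not conflict with. The only cosmetic difference is that the paper lengthens a short edge one unit at a time (to $(u-1,v)$ or $(u,v+1)$), whereas you jump straight to an edge of length exactly $p-1$; both arguments verify the same two facts---the replacement keeps all conflicts with $F\setminus\{e\}$ and is itself not in $F$---so they are the same proof.
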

\begin{proof}

 Consider $M=\M{-}{0}{0}{+}$.
 We have $A(M,p,k) = 1$ for all $p,k \in \bZ$, $k \geq 2$ and, if $p\leq 1$, $W(M,p,k) = \binom{k}{2}$ for all $k\geq 2$ due to Theorem~\ref{thm:generalMatrices}~\ref{enum:m2m40} and~\ref{enum:m2m4greater0}.
 This leaves to consider $W(M,p,k)$ in the case $p\geq 2$.

 Here edges $(u_1,v_1)$ and $(u_2,v_2)$ are conflicting if $v_2-u_1\geq p$ or $v_1-u_2 \geq p$, that is, the right endpoint of one edge is at least $p$ steps to the right of the left endpoint of the other edge.
 A clique in $M_p(G)$ is a set of edges in $G$, where pairwise the right endpoint of one edge is at least $p$ steps to the right of the left endpoint of the other edge.
 

 \medskip

 Consider $G=K_k$ with vertex set $[k]$.
 If $k\leq p$, then for any pair of edges $(u_1,v_1)$, $(u_2,v_2)$ we have $v_2-u_1\leq k-1\leq p-1$.
 Hence no edges are conflicting and $W(M,p,k) \leq \omega(M_p(G)) = 1$.
 Therefore $W(M,p,k)=1$.
 
 If $k\geq p+1$, then the set of all edges of length at least $p-1$ in $G$ forms a clique in $M_p(G)$.
 We claim that this set is a largest clique in $M_p(G)$.
 Indeed, consider a clique $F$ in $M_p(G)$ containing an edge $e=(u_1,v_1)$ of length at most $p-2$.
 Let $e_1=(u_1-1,v_1)$, $e_2=(u_1,v_1+1)$, if they exist in $G$.
 Note that at least one of these edges exists, since $k\geq p+1$.
 Then $e_1,e_2\not\in F$, since they are not in conflict with $e$.
 Observe that if $f=(u_2,v_2) \in F \setminus e$, then $v_1-u_2\geq p$ or $v_2-u_1\geq p$.
 In the first case $e_1$ and $f$ are conflicting since $v_1-u_2\geq p$, and $e_2$ and $f$ are conflicting since $v_1+1-u_2\geq p$.
 In the second case $e_1$ and $f$ are conflicting since $v_2-(u_1-1)\geq p$, and $e_2$ and $f$ are conflicting since $v_2-u_1\geq p$.
 Thus, we can replace $e$ in $F$ with a longer edge, $e_1$ or $e_2$, and obtain a clique with at least as many edges as $F$.
 Repeating this as long as needed eventually yields a clique of size at least $|F|$ with all edges of length at least $p-1$. 
 Hence we see that the set of all edges of length at least $p-1$ in $G$ is at least as large as any other clique in $M_p(G)$.
 Recall that there are $k-\ell$ edges of length $\ell$ in this $G$, $\ell = 1,\ldots,k-1$.
 Thus $W(M,p,k) \leq \omega(M_p(G)) \leq \sum_{\ell=p-1}^{k-1}(k-\ell)=\binom{k-p+2}{2}$.
 
 \medskip
 
 Now consider any ordered graph $G$ with $\chi(G)= k \geq p+1$.
 As mentioned above the set of all edges of length at least $p-1$ in $G$ forms a clique in $M_p(G)$.
 Hence $\omega(M_p(G)) \geq\binom{k-p+2}{2}$ due to Lemma~\ref{lem:LongEdges}.
 This shows that $W(M,p,k) \leq \binom{k-p+2}{2}$.

 \medskip
 
 Finally, if $M' = \M{0}{+}{-}{0}$, then $M'$ is obtained from $M=\M{-}{0}{0}{+}$ by switching the first with the third and the second with the last column.
 Thus $W(M',p,k)=W(M,p,k)$ and $A(M',p,k)=A(M,p,k)$ due to Observation~\ref{obs:swapping-columns}.
\end{proof}

%
%
%
%
%
%
%
%

\section{Proof of Theorem~\ref{thm:nest}}\label{sec:nest}
 

We shall prove later in Lemma~\ref{lem:shift-complement} that for any ordered graph $G$ and any $p \in \bZ$ we have that $M_p(G)$ for $M=\MM{+}{0}{-}{0}{0}{+}{0}{-}$ is the complement of $M^\text{nest}_{1-p}(G)$ for $M^\text{nest}=\MM{+}{0}{-}{0}{0}{-}{0}{+}$.
Let us refer to Figure~\ref{fig:Example-Matrices} top middle for an illustration of $M^\text{nest}$.
It will hence follow that $A(M,p,k) = W(M^\text{nest},1-p,k)$ and $W(M,p,k) = A(M^\text{nest},1-p,k)$ for any $p,k \in \bZ$, $k \geq 2$.
Thus we can restrict ourselves in this section to the matrix $M=\MM{+}{0}{-}{0}{0}{+}{0}{-}$ instead of $M^\text{nest}=\MM{+}{0}{-}{0}{0}{-}{0}{+}$.
The case of $A(M,p,k)$ for $p=0$ of the following result has been considered by Dujmovi\'{c} and Wood~\cite{DW04}.

\begin{proposition}[shift by at least $p$, row $12$ in Table~\ref{tab:detailed}]\label{P0M00P0M}
 ~\\
 Let $M = \MM{+}{0}{-}{0}{0}{+}{0}{-}$.
 \begin{compactitem}[\enskip]
  \item If $p \geq 1$, then  $A(M,p,k) = k-1$ and $W(M,p,k) = \ceil{\frac{k-1}{p}}$ for all $k \geq 2$.
 
  \item If $p \leq 0$, then $\frac{k}{4(|p|+1)} \leq A(M,p,k) \leq  \ceil{\frac{k-1}{2(|p|+1)}}$ and $W(M,p,k) = 2k-3$ for all $k \geq 2$.
 \end{compactitem}
\end{proposition}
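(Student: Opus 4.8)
The plan is to argue directly about the matrix $M = \MM{+}{0}{-}{0}{0}{+}{0}{-}$, from which the statement for $M^\text{nest}$ in Theorem~\ref{thm:nest} follows via Lemma~\ref{lem:shift-complement}. First I would record the conflict relation: two edges $e_1 = (u_1,v_1)$ and $e_2 = (u_2,v_2)$ are in conflict exactly when one is a rightward shift of the other by at least $p$ in \emph{both} endpoints, i.e.\ $u_2 - u_1 \ge p$ and $v_2 - v_1 \ge p$, or the same with $e_1,e_2$ swapped. It is convenient to treat $p \ge 1$ and $p \le 0$ separately. For $p \ge 1$ a clique in $M_p(G)$ is a \emph{$p$-staircase}: edges $e_1,\dots,e_m$ which, sorted by left endpoint, satisfy $u_{i+1}\ge u_i+p$ and $v_{i+1}\ge v_i+p$ (and then all pairs conflict); an independent set is a family no two of which form such a step. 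For $p \le 0$ put $q=|p|$; distributing the two disjuncts in the negated conflict condition, one sees that $e_1,e_2$ are \emph{non}-conflicting precisely when one edge strictly contains the other with both gaps at least $q+1$ --- call this \emph{$(q+1)$-nesting}. Since $(q+1)$-nesting is a partial order, the independent sets of $M_p(G)$ are exactly the $(q+1)$-nesting chains of edges, and the cliques are the edge sets no two of whose members are $(q+1)$-nested.

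For the case $p \ge 1$: to prove $A(M,p,k)=k-1$, note that in any $k$-critical subgraph $G'$ of an ordered graph $G$ with $\chi(G)\ge k$ the at least $k-1$ edges at the leftmost vertex share a left endpoint, hence differ by $0<p$ in the first coordinate and form an independent set, giving the lower bound; the matching upper bound is witnessed by $K_k$ on $\{p,2p,\dots,kp\}$, where two edges conflict iff their index pairs $(i,j)$, $1\le i<j\le k$, are comparable coordinatewise, so independent sets are antichains and the $k-1$ chains $\{(a,a+c):1\le a\le k-c\}$ cover all index pairs. To prove $W(M,p,k)=\ceil{(k-1)/p}$, the upper bound is realised in $K_k$ on $[k]$ by the length-$1$ edges $(1,2),(1{+}p,2{+}p),\dots$; for the lower bound observe that any clique of the conflict graph of $\M{+}{0}{0}{-}$ at parameter $p-1$ (pairwise disjoint spans at distance at least $p-1$) is, once sorted by left endpoint, also a $p$-staircase for $M$, so $\omega(M_p(G))\ge\omega$ of that conflict graph, whence $W(M,p,k)\ge W(\M{+}{0}{0}{-},p-1,k)=\ceil{(k-1)/p}$ by Proposition~\ref{P00M} (this value equals $k-1$ when $p=1$).

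For the case $p \le 0$, $q=|p|$: to prove $W(M,p,k)=2k-3$, take a $k$-critical subgraph $G'$; its edges at the leftmost vertex together with those at the rightmost vertex number at least $2(k-1)-1=2k-3$, and no two of them are $(q+1)$-nested (two edges through the leftmost, resp.\ rightmost, vertex share an endpoint, and an edge through the leftmost and one through the rightmost vertex cannot be $(q+1)$-nested since the containing edge would need an endpoint strictly past an extreme vertex of $G'$), which gives the lower bound. The upper bound comes from $K_k$ with vertices spaced $q+1$ apart, so that $(q+1)$-nesting of edges matches strict containment of the index pairs $(i,j)$, $1\le i<j\le k$: the largest antichain there has size $2k-3$ (the index pairs at the first or last vertex form one, and the $2k-3$ anti-diagonals $\{(a,b):a+b=s\}$, $s=3,\dots,2k-1$, are a chain cover). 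For the upper bound on $A(M,p,k)$ use $K_k$ on $[k]$: a $(q+1)$-nesting chain $e_1\supset\dots\supset e_m$ has $a_m\ge 1+(m-1)(q+1)$, $b_m\le k-(m-1)(q+1)$ and $a_m<b_m$, forcing $m\le\ceil{(k-1)/(2(q+1))}$, which is attained by the nested family $e_i=(1+(i-1)(q+1),\,k-(i-1)(q+1))$.

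The main obstacle is the lower bound $A(M,p,k)\ge k/(4(|p|+1))$ for $p\le 0$, which I would deduce from the case $p=0$ treated by Dujmovi\'c and Wood~\cite{DW04} (there $A(M,0,k)\ge k/4$, equivalently $\chi(F)\le 4\,q(F)$ for every graph $F$). If $e_1\supsetneq\dots\supsetneq e_m$ is a chain of strictly nested edges in $G$, then taking every $(q+1)$-st edge yields a $(q+1)$-nesting chain of length $\ceil{m/(q+1)}$, so $\alpha(M_p(G))\ge m/(q+1)$, where $m$ is the maximum length of a strict nesting chain for the vertex order of $G$. But $m$ is an upper bound for the queue-number of $G^\star$, so $\chi(G)=\chi(G^\star)\le 4\,q(G^\star)\le 4m$ by~\cite{DW04}; hence $\alpha(M_p(G))\ge\chi(G)/(4(q+1))\ge k/(4(|p|+1))$ whenever $\chi(G)\ge k$. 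Finally, all upper bounds above are realised by some ordered $K_k$, as noted in the passages dealing with them, and for $q=0$ the whole argument recovers exactly the bounds of~\cite{DW04}.
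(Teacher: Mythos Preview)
Your argument is essentially correct, but there are two small slips. First, in the $p\ge 1$ case for $A$, the statement ``two edges conflict iff their index pairs are comparable coordinatewise'' is false as written: the pairs $(1,2)$ and $(1,3)$ are comparable but not conflicting. The correct characterisation is \emph{strict} domination in both coordinates. Fortunately your chain cover $\{(a,a+c):1\le a\le k-c\}$ still consists of genuine cliques in $M_p(K_k)$, so the bound $\alpha\le k-1$ survives. Second, for the upper bound on $W(M,p,k)$ with $p\ge1$ you only \emph{exhibit} a clique of size $\lceil(k-1)/p\rceil$ in $K_k$ on $[k]$; to get $W\le\lceil(k-1)/p\rceil$ you must show no larger clique exists. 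This is one line (in a $p$-staircase the left endpoints lie in $\{1,\dots,k-1\}$ with pairwise gaps $\ge p$), but it is missing. The paper avoids this by observing that $M_p(G)$ is a subgraph of $M'_p(G)$ for $M'=\M{+}{0}{-}{0}$ and invoking Proposition~\ref{P0M0}.

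Where your approach genuinely diverges from the paper is the case $p\le0$. For $W(M,p,k)=2k-3$ you give a self-contained argument (edges at the two extreme vertices of a $k$-critical subgraph for the lower bound, anti-diagonal chain cover in a dilated $K_k$ for the upper bound), whereas the paper cites Dujmovi\'c--Wood for $p=0$ and then transfers to $p<0$ by scaling and monotonicity. For the lower bound on $A(M,p,k)$ the difference is more substantial: the paper proves directly, via a midpoint count, that any $G$ with $\alpha(M_p(G))\le a$ has fewer than $2a(q+1)n$ edges and is therefore $(4a(q+1)-1)$-degenerate. Your route instead thins a longest strict nesting chain of length $m$ to a $(q+1)$-nesting chain of length $\lceil m/(q+1)\rceil$, then bounds $m$ via $q(G^\star)\le m$ and applies the Dujmovi\'c--Wood inequality $\chi\le 4\,q$. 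Both are valid; yours is shorter and modular (it reduces general $p\le0$ to $p=0$), while the paper's argument is self-contained and recovers the edge-count and degeneracy information explicitly.
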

\begin{proof}
 If $p>0$, then two edges $(u_1,v_1)$ and $(u_2,v_2)$ are conflicting if either $u_1-u_2\geq p$ and $v_1-v_2\geq p$, or $u_2-u_1\geq p$ and $v_2-v_1\geq p$, that is, one edge is obtained from the other by moving each vertex at least $p$ steps to the right.
 If $p\leq 0$, then two edges $(u_1,v_1)$ and $(u_2,v_2)$ are conflicting if either $u_2-u_1\leq |p|$ and $v_2-v_1\leq |p|$, or $u_1-u_2\leq |p|$ and $v_1-v_2\leq |p|$, that is, one edge is obtained form the other by moving each vertex at most $|p|$ steps to the left or arbitrarily many steps to the right (and keeping the ordering of the vertices within the edge).
 That is, the edges $(u_1,v_1)$ and $(u_2,v_2)$ are not conflicting if and only if $|u_1-u_2|\geq |p|+1$, $|v_1-v_2|\geq |p|+1$ and the edges are nested (i.e., $u_1 < u_2 < v_2 < v_1$ or $u_2 < u_1 < v_1 < v_2$).

 \medskip

 Consider $A(M,p,k)$ for $p \geq 1$.
 Consider $G = K_k$ with vertex set $\{ip \mid i\in[k]\}$.
 Observe that any two edges of the same length are conflicting.
 Since there are only $k-1$ different lengths of edges in $G$ we have $A(M,p,k) \leq \alpha(M_p(G)) \leq k-1$.

 Now consider an arbitrary ordered graph $G$ with $\chi(G)=k$.
 The first row of $M$ is $M'=\M{+}{0}{-}{0}$.
 Hence, if $e_1e_2\in E(M_p(G))$, then $e_1e_2\in E(M'_p(G))$.
 So $E(M_p(G))$ is a subgraph of $E(M'_p(G))$, which implies $\alpha(M_p(G)) \geq \alpha(M'_p(G))$ and $\omega(M_p(G)) \leq \omega(M'_p(G))$.
 Thus with Proposition~\ref{P0M0} we can conclude that $A(M,p,k) \geq A(M',p,k) \geq k-1$, which shows that $A(M,p,k)=k-1$.

 \medskip

 Consider $W(M,p,k)$ for $p \geq 1$.
 From above we have $\omega(M_p(G)) \leq \omega(M'_p(G))$ for $M'=\M{+}{0}{-}{0}$ and any ordered graph $G$, which implies with Proposition~\ref{P0M0} that $W(M,p,k) \leq W(M',p,k) = \ceil{(k-1)/p}$.
 
 For the lower bound $W(M,p,k) \geq \ceil{(k-1)/p}$ we consider the matrix $M''=\M{+}{0}{0}{-}$.
 For any ordered graph $G$ and two edges $e_1 = (u_1,v_1)$, $e_2 = (u_2,v_2)$ in $G$ with $e_1e_2 \in E(M''_{p-1}(G))$, say $u_1 - v_2 \geq p-1$, we have
 \[
   u_1 - u_2 \geq u_1 - v_2 + 1 \geq p-1 + 1 = p
 \]
 and
 \[
   v_1 - v_2 \geq u_1 + 1 - v_2 \geq p-1 + 1 = p.
 \]
 Hence $e_1e_2 \in E(M_p(G))$ and thus $E(M''_{p-1}(G))$ is a subgraph of $E(M_p(G))$.
 As before, we conclude with Proposition~\ref{P00M} that $W(M,p,k) \geq W(M'',p-1,k) = \ceil{(k-1)/p}$.

 \medskip

 Consider $A(M,p,k)$ for $p \leq 0$ and let $q=|p|$.
 Consider $G=K_k$ with vertex set $[k]$.
 Suppose that $F$ is an independent set of size $i$ in $M_p(G)$, i.e., the edges in $F$ are pairwise nested by at least $q+1$ positions.
 Then the distance between the leftmost left endpoint and the rightmost left endpoint of edges in $F$ is at least $(i-1)(q+1)$.
 Similarly the distance between the rightmost right endpoint and the leftmost right endpoint is at least $(i-1)(q+1)$.
 Thus $2((i-1)(q+1)+1)\leq k$.
 Therefore any independent set has size at most $\floor{\frac{k-2}{2(q+1)}} + 1 = \ceil{\frac{k-1}{2(q+1)}}$.
 Thus we have $A(M,p,k) \leq \alpha(M_p(G)) \leq \ceil{\frac{k-1}{2(q+1)}}$.

 Now consider any ordered graph $G$ with $n$ vertices and $\alpha(M_p(G)) \leq a$.
 Let $G'$ be the ordered graph with vertex set $[n]$ obtained from mapping the $i^\text{th}$ vertex of $G$ to $i\in[n]$.
 Then the vertices in $G$ are in the same order as their images in $G'$ and the distance between two vertices in $G'$ is at most the distance of the corresponding preimages in $G$.
 Hence, if two edges in $G'$ are not in conflict, then the two corresponding edges in $G$ are not in conflict.
 Therefore $\alpha(M_p(G'))\leq \alpha(M_p(G)) \leq a$.

 We will show that $G'$ has fewer than $2a(q+1)n$ edges.
 For every edge $(u,v)$ of $G'$ consider its midpoint $(u+v)/2$.
 The set of possible midpoints is given by $X=\{\frac{i}{2}\mid i=3,\ldots,2n-1\}$.
 If some $\ell_x$ edges of $G'$ have the same midpoint $x\in X$, then taking every $(q+1)^{\text{st}}$ such edge (in increasing order of their lengths) gives an independent set in $M_p(G')$.
 It follows that $\ell_x \leq \alpha(M_p(G'))(q+1)$ for every midpoint $x\in X$.
 Since $|X| = 2n-3$ and $\alpha(M_p(G')) \leq a$ this gives 
 \begin{align}|E(G)| = |E(G')| \leq (2n-3)a(q+1) < 2a(q+1)n.\label{eq:degenerate}\end{align}
 
 If $H$ is an induced ordered subgraph of $G$, then $\alpha(M_p(H))\leq \alpha(M_p(G))\leq a$.
 Hence $H$ has less than $2a(q+1)|V(H)|$ edges (as the arguments above hold for any ordered graph).
 In particular $H$ has a vertex of degree less than $4a(q+1)$.
 This shows that $G$ is $(4a(q+1)-1)$-degenerate and hence $\chi(G)\leq 4a(q+1)$.
 As $G$ was arbitrary we conclude that $\Xa(M,p,a) \leq 4a(q+1)$ and using~\eqref{eq:X-from-A} we get $A(M,p,k) \geq \frac{k}{4(q+1)}$.
  
 \medskip
 
 Consider $W(M,p,k)$ for $p \leq 0$.
 Dujmovi\'{c} and Wood~\cite{DW04} prove that $W(M,p,k)=2k-3$ for $p = 0$.
 
 Consider $p\leq -1$ and any fixed ordered graph $G$.
 Let $G'$ denote the ordered graph obtained from $G$ by multiplying every vertex by $|p|+1$, i.e., the order of vertices in $G$ and $G'$ is the same, but in $G'$ vertices have pairwise distance at least $|p|+1$.
 Then two edges in $G'$ form an edge in $M_p(G')$ if and only if the corresponding edges in $G$ form an edge in $M_0(G)$.
 In particular, $M_p(G') = M_0(G)$.
 Choosing $G$ to be an ordered graph with $\chi(G) = k$ and $\omega(M_0(G))=2k-3$ (for example $G = K_k$ works) shows that $W(M,p,k) \leq \omega(M_p(G')) = \omega(M_0(G)) = 2k-3$.
 
 On the other hand, consider any ordered graph $G$ and any two edges $e_1 = (u_1,v_1)$ and $e_2 = (u_2,v_2)$ that are conflicting in $M_0(G)$, say $u_2-u_1 \leq 0$ and $v_2-v_1 \leq 0$.
 Then we have $u_2 - u_1 \leq |p|$ and $v_2-v_1 \leq |p|$, i.e., $e_1$ and $e_2$ are also conflicting in $M_p(G)$.
 This shows that $\omega(M_p(G)) \geq \omega(M_0(G))$ for any $G$ and hence $W(M,p,k) \geq W(M,0,k) = 2k-3$.
\end{proof}

\begin{lemma}\label{lem:shift-complement}
 Let $p\in\bZ$, let $G$ be an ordered graph and let $M=\MM{+}{0}{-}{0}{0}{+}{0}{-}$.
 Then the graph $M^\text{nest}_p(G)$ is the complement of the graph $M_{1-p}(G)$.
\end{lemma}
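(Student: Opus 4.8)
The plan is to translate both conflict relations into elementary inequalities between the endpoint-differences of the two edges, and then to verify a short Boolean identity. Concretely, I would fix two edges $e_1=(u_1,v_1)$ and $e_2=(u_2,v_2)$ of $G$ and abbreviate $s=u_1-u_2$, $t=v_1-v_2$, and $q=p-1$. Since $M^\text{nest}(u_1,v_1,u_2,v_2)^\top=(u_1-u_2,\,v_2-v_1)^\top=(s,-t)^\top$ and $M^\text{nest}(u_2,v_2,u_1,v_1)^\top=(-s,t)^\top$, the pair $e_1e_2$ lies in $E(M^\text{nest}_p(G))$ exactly when $(s\geq p \wedge t\leq -p)$ or $(s\leq -p \wedge t\geq p)$; as $s$, $t$, $q$ are integers and $p=q+1$, this is the same as $(s>q \wedge t<-q)$ or $(s<-q \wedge t>q)$, so $e_1e_2\notin E(M^\text{nest}_p(G))$ precisely when $(s\leq q \vee t\geq -q)\wedge(s\geq -q \vee t\leq q)$. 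Similarly $M(u_1,v_1,u_2,v_2)^\top=(s,t)^\top$, so $e_1e_2\in E(M_{1-p}(G))$ exactly when $(s\geq -q\wedge t\geq -q)$ or $(s\leq q\wedge t\leq q)$. Writing $a$, $b$, $c$, $d$ for the truth values of $s\leq q$, $s\geq -q$, $t\leq q$, $t\geq -q$ respectively, the lemma therefore reduces to the Boolean equivalence $(a\vee d)\wedge(b\vee c) \iff (b\wedge d)\vee(a\wedge c)$.

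To establish this I would distribute the left-hand side to $ab\vee ac\vee bd\vee cd$, which is the right-hand side together with the two extra disjuncts $ab$ and $cd$; hence it suffices to show that each of $ab$ and $cd$ implies the right-hand side. Here the only step that is not pure propositional logic enters: $a\wedge b$ says $-q\leq s\leq q$, which forces $q\geq 0$, and for an integer $t$ one cannot have simultaneously $t>q$ and $t<-q$ when $q\geq 0$; hence $c\vee d$ holds, and combined with $a\wedge b$ this yields $(a\wedge c)\vee(b\wedge d)$. The case $c\wedge d$ is symmetric: it forces $q\geq 0$ and hence $a\vee b$. This proves $(a\vee d)\wedge(b\vee c) \iff (b\wedge d)\vee(a\wedge c)$, that is, $e_1e_2\in E(M^\text{nest}_p(G))$ if and only if $e_1e_2\notin E(M_{1-p}(G))$ for every pair of edges, which is exactly the assertion since $M^\text{nest}_p(G)$ and $M_{1-p}(G)$ share the vertex set $E(G)$.

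I do not expect a genuine obstacle: the sole input beyond propositional calculus is the implication $q\geq 0\Rightarrow(c\vee d)$ (and its mirror), which uses only that $s$, $t$, $q$ are integers. The one point to handle with care is the off-by-one passage from $s\geq p$ to $s>q$ with $q=p-1$ — it is precisely here that the shift $p\mapsto 1-p$ in the statement originates — together with a careful reading of the two column patterns of $M$ and $M^\text{nest}$ to be sure that the signs attached to $s$ and $t$ come out as claimed above.
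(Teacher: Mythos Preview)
Your argument is correct. Translating both conflict relations into inequalities in $s=u_1-u_2$, $t=v_1-v_2$ and $q=p-1$ is exactly the right move, and the Boolean identity $(a\vee d)\wedge(b\vee c)\Longleftrightarrow(b\wedge d)\vee(a\wedge c)$ under the side constraint ``$a\wedge b\Rightarrow c\vee d$ and $c\wedge d\Rightarrow a\vee b$'' is precisely what is needed. Your justification of that side constraint (that $-q\leq s\leq q$ forces $q\geq 0$, whence $t>q$ and $t<-q$ cannot both hold) is clean and complete.

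Compared with the paper, your route is slightly different and in one respect more thorough. The paper argues by direct implication chains: from $e_1e_2\in E(M_{1-p}(G))$ it derives $e_1e_2\notin E(M^{\mathrm{nest}}_p(G))$, and from $e_1e_2\in E(M^{\mathrm{nest}}_p(G))$ it derives $e_1e_2\notin E(M_{1-p}(G))$. But these two implications are contrapositives of each other, so only the disjointness of the two edge sets is explicitly shown; the converse direction---that a non-edge of $M_{1-p}(G)$ is necessarily an edge of $M^{\mathrm{nest}}_p(G)$---is exactly your ``$ab\Rightarrow ac\vee bd$'' (and its twin ``$cd\Rightarrow ac\vee bd$''), which is the one step that is not pure propositional calculus and genuinely uses $q\geq 0$. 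So your approach not only matches the paper's intent but also makes explicit the half of the equivalence the paper leaves implicit.
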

\begin{proof}
 Let $e_1 = (u_1,v_1)$, $e_2=(u_2,v_2)$ be two edges in $G$.
 If $e_1e_2 \in E(M_{1-p}(G))$, say $M(u_1,v_1,u_2,v_2)^\top \geq \mathbf{1-p}$, then
 \begin{align*}
  & \hspace{-3em} u_1-u_2 \geq 1-p \land v_1-v_2 \geq 1-p\\
  \Rightarrow & \hspace{0.5em} \big(\lnot\ u_1-u_2 \leq -p\big) \land \big(\lnot\ v_1-v_2 \leq -p\big)\\
  \Rightarrow & \hspace{0.5em} \big(\lnot\ u_2-u_1 \geq p\big) \land \big(\lnot\ v_2-v_1 \geq p\big)\\
  \Rightarrow & \hspace{0.5em} e_1e_2 \notin E(M^\text{nest}_{p}(G)).
 \end{align*}
 Similarly, if $e_1e_2 \in E(M^\text{nest}_{p}(G))$, say $M^\text{nest}(u_1,v_1,u_2,v_2)^\top \geq \mathbf{p}$, then
 \begin{align*}
  & \hspace{-3em} u_1-u_2 \geq p \land v_2-v_1 \geq p\\
  \Rightarrow & \hspace{0.5em} u_2-u_1 \leq -p \land v_1-v_2 \leq -p\\
  \Rightarrow & \hspace{0.5em} \big(\lnot\ u_2-u_1 \geq 1-p\big) \land \big(\lnot\ v_1-v_2 \geq 1-p\big)\\
  \Rightarrow & \hspace{0.5em} e_1e_2 \notin E(M_{1-p}(G)).
 \end{align*}
\end{proof}
 
Lemma~\ref{lem:shift-complement} shows that for any $p,k \in \bZ$, $k \geq 2$, we have $A(M^\text{nest},p,k) = W(M,1-p,k)$ and $W(M^\text{nest},p,k) = A(M,1-p,k)$.
Therefore Theorem~\ref{thm:nest} follows from Proposition~\ref{P0M00P0M}.\qed

%

\section{Conclusions}\label{sec:conclusions}

In this paper we consider ordered graphs and introduce the notion of conflicting pairs of edges with respect to a fixed  matrix $M \in \bZ^{s \times 4}$ and a parameter $p \in \bZ$.
This algebraic framework captures many interesting graph parameters, such as the page-number, queue-number or interval chromatic number.
We consider the following extremal question for given $M$ and $p$:

\begin{center}
 \raggedright
 \textit{``What is the maximum chromatic number $\Xw(M,p,w)$, respectively $\Xa(M,p,a)$, among all ordered graphs with no set of $w$ pairwise conflicting edges, respectively no set of $a$ pairwise non-conflicting edges?''}
\end{center}

We give sufficient conditions on the pairs of matrices $M$ and $p\in\bZ$ under which $\Xw(M,p,w)$ and/or $\Xa(M,p,a)$ are as small or as large as possible for any $a$, $w\geq 1$; namely when $\Xw(M,p,w)=f(w)$ or $\Xw(M,p,w) = \infty$, respectively $\Xa(M,p,a)=f(a)$ or $\Xa(M,p,a) = \infty$ (recall that for $x \in \bZ$, $x \geq 1$, $f(x)$ is the largest integer $k$ with $\binom{k}{2}\leq x$).
Moreover, we give exact results for all $1 \times 4$--matrices with entries in $\{-1,0,1\}$.
Note that additionally to the results from Theorem~\ref{thm:nest} exact values for several $\medmuskip=0mu 2\times4$--matrices can be obtained from Theorems~\ref{thm:generalMatrices} and~\ref{thm:specialMatrices} using Lemma~\ref{lem:small-facts}~\ref{enum:exchange}.

\medskip

\begin{figure}[htb]
 \centering
 \includegraphics{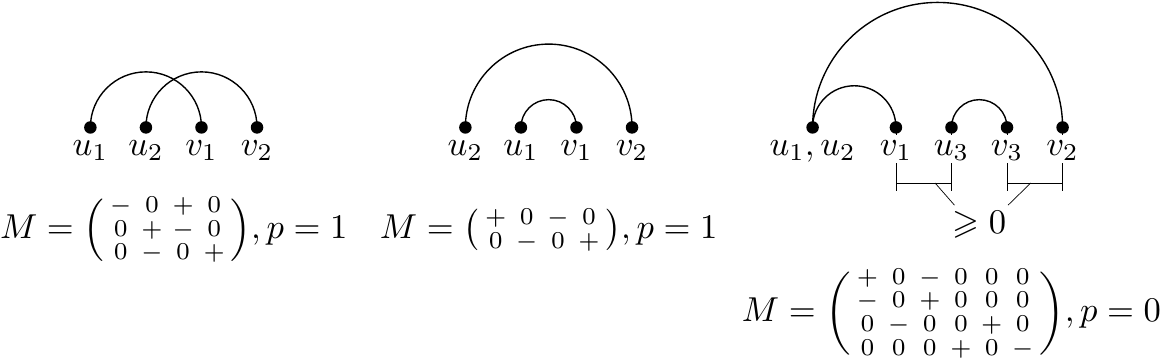}
 \caption{Matrix $M$ and parameter $p$ corresponding to the presence of a pair of crossing edges (left), nesting edges (middle) and a bonnet (right).}
 \label{fig:Example-Matrices-2}
\end{figure}

Determining $\Xw(M,p,w)$ and $\Xa(M,p,a)$ exactly for more matrices $M$ and parameters $p$ remains an interesting challenge, for example for the ``cross''-matrix in the left of Figure~\ref{fig:Example-Matrices-2} or the ``nesting''-matrix in the center of Figure~\ref{fig:Example-Matrices-2}.
In addition, several more general questions remain open.
Most noticeable, all our lower bounds are attained by complete graphs and hence it would be interesting to find $M$ and $p$ for which the maximum chromatic number $\Xw(M,p,w)$ or $\Xa(M,p,a)$ is \emph{not} attained by any complete graph.
More specifically we have the following question.

\begin{question}\label{que:CompleteNotTight}
 Are there integers $s$, $p$, $t$, and a matrix $M\in\bZ^{s\times 4}$ such that for every complete ordered graph $G$ on $k$ vertices we have $\alpha(M_p(G))> A(M,p,k)$ or $\omega(M_p(G))> W(M,p,k)$?
 What if $s=1$ or $s = 2$?
\end{question}

Our framework can be naturally extended to conflicts that are defined on sets of $t \geq 3$ edges, rather than just pairs of edges, in which case one would use matrices $M \in \bZ^{s \times 2t}$.
Then the maximum chromatic number among all ordered graphs not containing a particular ordered graph on $t$ edges as an ordered subgraph is given by $\Xw(M,0,1)$ for an appropriate matrix $M \in \bZ^{s \times 2t}$.
Most recently, the authors have shown the existence of ordered graphs of arbitrarily large chromatic number without so-called bonnets~\cite{our-trees}.
In terms of the framework here, this can be restated as $\Xw(M,0,1) = \infty$, where $M$ is the $4 \times 6$--matrix in the right of Figure~\ref{fig:Example-Matrices-2} (where, in contrast to~\cite{our-trees}, a triangle is considered as a bonnet).
It is easy to see that for any ordered graph $G$ that contains a triangle we have $\omega(M_0(G))\geq 2$.
Therefore the graphs that yield $\Xw(M,0,1) = \infty$ are not complete graphs (compare with Question~\ref{que:CompleteNotTight}).

\medskip

Another natural generalization of the framework is obtained by considering other parameters of the conflict graph.
For example one may ask for the maximum chromatic number among all ordered graphs with conflict graphs of bounded density, bounded chromatic number or small maximum degree.

\bibliographystyle{plain}
\bibliography{lit-2}

\end{document}